\documentclass[11pt, leqno]{amsart}
\usepackage{amsmath,amssymb,amsthm}
\usepackage{graphicx}
\usepackage{subfigure}
\usepackage{enumerate}
\usepackage{fullpage}
\usepackage{url}
\usepackage{float}
\usepackage{xspace}
\usepackage{hyperref}
\usepackage{color}
\usepackage{thmtools, thm-restate}
\usepackage{etoolbox}
\usepackage[page]{appendix}

\setcounter{tocdepth}{1}


\newtheorem{theorem}{Theorem}

\newtheorem{corollary}{Corollary}
\newtheorem{lem}{Lemma}[section]
\newtheorem{proposition}{Proposition}

\newtheorem{remark}{Remark}

\title{Moments of isotropic measures and optimal projective codes}

\author{Alexey Glazyrin}
\address{School of Mathematical \& Statistical Sciences, The University of Texas Rio Grande Valley, Brownsville, TX 78520}
\email{alexey.glazyrin@utrgv.edu}
\thanks{}
\subjclass[2000]{Primary 52C17, 52C35, 05B40; Secondary 33C55, 41A05}
\keywords{isotropic measure, code, design, potential energy minimization, linear programming bounds}
\date{\today}

\begin{document}

\maketitle
\begin{abstract}

In this paper, we use the linear programming approach to find new upper bounds for the moments of isotropic measures. These bounds are then utilized for finding lower energy bounds for projective codes. We also show that the obtained energy bounds are sharp for several infinite families of codes.

\end{abstract}

\section{Introduction}

To introduce the problems that will be addressed, let $\mathcal{C} = \{x_1,\dots,x_M\}\subset \mathbb{S}^{d-1}$ be a subset of points on the sphere in $\mathbb{R}^d.$ We will call $\mathcal{C}$ a spherical $\varphi$-code if the angular distance between any two points of $\mathcal{C}$ is not greater than $\varphi.$ The general problem is to find the maximum cardinality of a $\varphi$-code in $\mathbb{S}^{d-1}$. The problem of finding the maximum $\pi/3$-code is known as \textit{the kissing number problem}. For $d=3$, the problem of finding the maximal $\varphi$ such that there is a $\varphi$-code with $n$ points for a given $n$, is the Tammes problem \cite{tam30}. Similar problems can be solved for projective and, more generally, for Grassmanian spaces. In a seminal paper \cite{con96}, Conway, Hardin, and Sloane investigated optimal projective and Grassmanian codes computationally and proved optimality of many codes.

One of the most important methods in this area is the linear programming approach. The method was discovered by Delsarte \cite{del72, del73} for the Hamming space and then extended to the spherical case \cite{del77}; and was generalized by Kabatyansky and Levenshtein \cite{kab78} who use it to prove the current best asymptotic upper bound on the density of sphere packings in Euclidean spaces, $2^{-(0.5990\ldots+o(1))n}$ (improved by a constant in \cite{coh14}). 

The analogue of the Delsarte bound for unit sphere packings in Euclidean spaces is known as the Cohn-Elkies method \cite{coh03} (see also the independent result of Gorbachev \cite{gor00}). Using this method, Viazovska, in the recent groundbreaking work \cite{via17}, proved that $E_8$ gives the densest sphere packing in dimension 8. It was established in \cite{coh09} that the Leech lattice gives the densest sphere packing among lattices in dimension 24 and, soon after Viazovska's breakthrough, it was shown in \cite{coh17a} that the Leech lattice also solves the general sphere packing problem. Recently, the same approach was used to show the universal optimality of $E_8$ and the Leech lattice \cite{coh19}.

For a discrete configuration of points one can define a complete energy of this configuration and study point allocations minimizing it either locally, or globally. This approach is extremely popular in various areas of mathematics and science (see \cite{fop12, why52, coh56, mel77, saf97, bal09} for numerous examples). A noteworthy example is Problem 7 from the list of Mathematical Problems for the Next Century by Smale \cite{sma98}. For the case of the circle, typically the optimal configuration is a regular polygon. For the two-dimensional sphere, even the case of 5 points is highly non-trivial \cite{sch13, sch16}.

For a compact or finite metric space $M$ with the distance function $d$, given a potential real function $f$, we define the total energy of a point set $X=\{x_1,\ldots,x_N\}$ by 

$$I_f(X)=\sum_{i\neq j} f(d^2(x_i,x_j)).$$

The problem consists in finding the smallest energy and all minimizing configurations of points. Note that the packing problem can be interpreted as a special case of the energy optimizing problem either by considering non-continuous potentials or as a limiting configuration for the potential functions $e^{-\Theta d^2(x_i,x_j)}$ with $\Theta\rightarrow\infty$.

The linear programming machinery developed for finding packing bounds can be extended to this general setup as well. This extension was done by Yudin who also used it to find sharp energy bounds for $d+1$ and $2d$ points in $\mathbb{S}^{d-1}$ in \cite{yud92}. It was later applied to the set of minimal vectors of $E_8$, the set of minimal vectors of the Leech lattice, and the regular icosahedron in \cite{kol94, and96, and97}. In \cite{kol97}, Kolushov and Yudin ask how generally their technique can be applied. This question was answered by Cohn and Kumar in \cite{coh07} who proved that \textit{sharp configurations} on spheres ($s$-distance sets which are also $(2s-1)$-designs) are global minimizers for all completely monotonic potentials $f$, i.e. such that $(-1)^k f^{(k)} (t^2)\geq 0$ for all $k\geq 0$ and all possible distances $t$.

In \cite{buk18}, Bukh and Cox developed new packing bounds for codes in real and complex projective spaces. They also showed that these bounds are sharp for certain families of codes. The key element of their result is the upper bound on the first moment of isotropic measures. In this paper, we generalize their bound for moments of isotropic measures using the Yudin-type linear programming approach. Following their ideas we also develop new bounds on energies of projective codes and show that these bounds are sharp for several infinite families of codes.

Here is the list of the main results obtained in this paper.

\begin{enumerate}

\item We find the general linear programming bound for a certain class of energies defined on isotropic measures in the spaces $\mathbb{R}^d$, $\mathbb{C}^d$, $\mathbb{H}^d$ including $q$-th moments of isotropic measures for $q\in [1,2]$ (Theorem \ref{thm:moments} and Corollaries \ref{cor:yudin}-\ref{cor:yudin-moments}). This bound is the direct analog of Delsarte-Yudin linear programming energy bounds for codes in two-point homogeneous spaces.

\item We use this general linear programming bound to prove that the $q$-th moment of an isotropic measure in $K^d$ ($K=\mathbb{R}$, $\mathbb{C}$, or $\mathbb{H}$) is not greater than $\beta^q + \frac {1-\beta^q} {M},$ where $\beta=\sqrt{\frac {1} {d+2(dim_{\mathbb{R}} K)^{-1}}}$, $M=d+ \frac {d^2-d} 2 dim_{\mathbb{R}} K$ (Theorem \ref{thm:q-measures}). This bound is sharp precisely for uniform distributions over maximal projective simplices that are known to exist for $d=2, 3, 7, 23$ when $K=\mathbb{R}$, $d=3$ when $K=\mathbb{H}$, and, conjecturally, for all $d$ when $K=\mathbb{C}$. This resolves the conjecture of Bukh and Cox \cite{buk18}. Their upper bound for the first moment of an isotropic measure is a direct consequence of this general approach in the case $q=1$.

\item Using upper bounds for isotropic measures we find new lower bounds for a class of the $p$-frame energy potentials over projective codes in real, complex, and quaterionic projective spaces (Theorem \ref{thm:p-energy}).

\item We construct several infinite families of projective codes for which the lower bound on the $p$-frame energy from the previous item is precise (Theorem \ref{thm:constr}). This construction is essentially based on the Gale transform for representatives of projective codes and generalizes Naimark complement frames and the construction of Bukh and Cox.

\end{enumerate}

The paper is structured as follows. In Section \ref{sect:codes}, we provide the general setup for projective codes, briefly describe the linear programming approach and its applications, the Welch bound and the Yudin bound. In Section \ref{sect:tight}, we define tight frames and isotropic measures and discuss their properties. In Section \ref{sect:moments}, we develop a new linear programming approach for bounding energies (particularly, moments) of tight frames and isotropic measures, find new upper bounds of the $q$-th moments of isotropic measures and tight frames, compare the upper bounds, talk about properties of potential minimizers, and discuss computational results. Section \ref{sect:dual} is devoted to the approach of Bukh and Cox connecting minimization problems for projective codes to maximization problems for the dual objects, i.e. tight frames and isotropic measures. There we derive a new lower bound for the $p$-frame energy of projective codes. In Section \ref{sect:constr}, we describe the general construction of projective codes for which the developed lower bound is precise. Section \ref{sect:disc} is devoted to discussing open questions and possible directions for further research in this area. Appendices \ref{app:comp} and \ref{app:orth} contain the proofs of two results from Section \ref{sect:moments}.

\section{Codes in projective spaces}\label{sect:codes}

For the setup on codes in projective spaces we generally follow \cite{coh16}. For $K=\mathbb{R}, \mathbb{C}, \mathbb{H}$ by $K \mathbb{P}^{d-1}$ we mean the set of lines in $K^d$, i.e. $K \mathbb{P}^{d-1}=(K^d\setminus\{0\})/K^{\times}$ defining the equivalence relation by $x\sim x e$ for all $x\in K^d\setminus\{0\}$ and any $e\in K^{\times}$. Note that the right multiplication in this definition is important when $K=\mathbb{H}$ since its multiplication is not commutative.

Each space $K^d$ is equipped with the standard Hermitian product $\langle x, y \rangle = x^* y$, where by $x^*$ we mean the conjugate transpose of $x$. For any element of $K\mathbb{P}^{d-1}$ we will consider its unit-length representative from $K^d$ sometimes abusing the notation. A metric in $K\mathbb{P}^{d-1}$ can be defined by $\rho(x_1,x_2)=\sqrt{1-|\langle x,y \rangle|^2}$, where $x$ and $y$ are such representatives. This metric is topologically equivalent to the normalized Fubini-Study metric: $\vartheta(x_1,x_2)=2 \arccos (|\langle x,y \rangle|)$.

Each element of $x\in K^d$ is associated with the Hermitian matrix $\Pi (x) = x x^*$. For any unit-length vector $x$, $Tr\, \Pi(x) = 1$ and $\Pi^2=\Pi$. The space $\mathcal{H} (K^d)$ of all Hermitian matrices is a real vector space of dimension $d+ \frac {d^2-d} 2 dim_{\mathbb{R}} K$ ($d$ real diagonal elements and $\frac {d^2-d} 2$ elements from $K$ above the diagonal). This space is equipped with the inner product $\langle A, B \rangle = Re\, Tr\,(AB)$. It is fairly easy to check that, for $x,y\in K\mathbb{P}^{d-1}$, $\langle \Pi(x), \Pi(y) \rangle = |\langle x, y \rangle|^2$.

By a \textit{regular simplex} in $K\mathbb{P}^{d-1}$ we mean a set of distinct points $\{x_1,\ldots, x_N\}$ where $|\langle x_i, x_j \rangle|$ are the same for all $i\neq j$. 

\begin{lem}\label{lem:max-simplex}
For a regular simplex $\{x_1,\ldots, x_N\}$ in $K\mathbb{P}^{d-1}$,
$$N\leq d+ \frac {d^2-d} 2 dim_{\mathbb{R}} K.$$

If $N=d+ \frac {d^2-d} 2 dim_{\mathbb{R}} K$,

$$\sum\limits_{i=1}^N \Pi(x_i) = \frac N d I_d.$$
\end{lem}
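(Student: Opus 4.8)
The plan is to translate the problem into linear algebra in the real inner product space $\mathcal{H}(K^d)$ via the projections $\Pi(x_i)$. Write $\alpha = |\langle x_i, x_j \rangle|$ for the common value attained by a regular simplex; since the $x_i$ represent distinct lines, the equality case of Cauchy--Schwarz gives $\alpha < 1$. Using the identity $\langle \Pi(x), \Pi(y) \rangle = |\langle x, y \rangle|^2$ recorded in the excerpt, together with $\langle \Pi(x_i), \Pi(x_i) \rangle = Tr\, \Pi(x_i) = 1$, the Gram matrix $G$ of the family $\{\Pi(x_1), \dots, \Pi(x_N)\}$ is $G = (1-\alpha^2) I_N + \alpha^2 J_N$, where $J_N$ is the all-ones matrix.

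First I would establish the cardinality bound. The eigenvalues of $G$ are $1 + (N-1)\alpha^2$ (with eigenvector $\mathbf{1}$) and $1 - \alpha^2$ (with multiplicity $N-1$), and both are strictly positive because $0 \le \alpha^2 < 1$. Hence $G$ is positive definite, so the projections $\Pi(x_i)$ are linearly independent in $\mathcal{H}(K^d)$. Since $\dim_{\mathbb{R}} \mathcal{H}(K^d) = d + \frac{d^2-d}{2} dim_{\mathbb{R}} K$, this immediately yields the asserted inequality on $N$.

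For the equality case, assume $N = d + \frac{d^2-d}{2} dim_{\mathbb{R}} K$, so that the $\Pi(x_i)$ form a basis of $\mathcal{H}(K^d)$. The key step is to locate the identity matrix $I_d$ in this basis. Writing $I_d = \sum_i c_i \Pi(x_i)$ and pairing with $\Pi(x_j)$, the relations $\langle I_d, \Pi(x_j) \rangle = Tr\, \Pi(x_j) = 1$ show that the coordinate vector $c = (c_1, \dots, c_N)^{T}$ solves $Gc = \mathbf{1}$. Because $\mathbf{1}$ is precisely the eigenvector of $G$ with eigenvalue $1 + (N-1)\alpha^2$, I obtain $c = (1 + (N-1)\alpha^2)^{-1} \mathbf{1}$; that is, all $c_i$ coincide and $\sum_i \Pi(x_i) = (1 + (N-1)\alpha^2)\, I_d$. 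Taking the trace of both sides and using $Tr \sum_i \Pi(x_i) = N$ forces $1 + (N-1)\alpha^2 = N/d$, and the claimed identity $\sum_i \Pi(x_i) = \frac{N}{d} I_d$ follows.

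The only genuinely delicate points are verifying that the inner product $\langle A, B \rangle = Re\, Tr\,(AB)$ is positive definite on $\mathcal{H}(K^d)$ (for Hermitian $A$ one has $\langle A, A \rangle = Tr(A^2) = \sum \lambda_i^2 \ge 0$, with equality only for $A = 0$), which is what lets me pass from positive-definiteness of $G$ to linear independence, and the observation that $\mathbf{1}$ is an eigenvector of $G$, which makes the coordinates of $I_d$ uniform and pins down $\alpha^2$ with no extra work. Everything else is routine bookkeeping. An alternative for the last step would be to compute $\langle \sum_i \Pi(x_i), \Pi(x_j) \rangle$ and $\langle \frac{N}{d} I_d, \Pi(x_j) \rangle$ directly and match them across the basis, but routing through the eigenvector of $G$ seems cleaner and also recovers the forced value $\alpha^2 = \frac{N-d}{d(N-1)}$.
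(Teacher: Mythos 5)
Your proposal is correct and follows essentially the same route as the paper: the same Gram matrix $(1-\alpha^2)I_N + \alpha^2 J_N$, positive definiteness from $\alpha^2\in[0,1)$ giving linear independence and the bound $N\leq \dim_{\mathbb{R}}\mathcal{H}(K^d)$, and in the equality case expressing $I_d$ in the basis $\{\Pi(x_i)\}$ and pinning down the coefficients by pairing and taking traces. Your explicit observation that the coordinate vector solves $Gc=\mathbf{1}$ with $\mathbf{1}$ an eigenvector of $G$ is just a cleaner, more rigorous phrasing of the paper's remark that the values $\langle I_d,\Pi(x_i)\rangle$ force equal coefficients, and it recovers $\alpha^2=\frac{N-d}{d(N-1)}$ as a bonus.
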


\begin{proof}
Assume $|\langle x_i, x_j \rangle|=\alpha$ for all $i\neq j$. The Gram matrix formed by $\Pi(x_i)$, $1\leq i \leq N$, is $(1-\alpha^2)I_N + \alpha^2 J_N$, where $I_N$ is the identity matrix and $J_N$ is the matrix of all ones. Since $\alpha^2\in[0,1)$, this matrix is positive definite with rank $N$ which cannot be greater than the rank of $\mathcal{H} (K^d)$, that is $d+ \frac {d^2-d} 2 dim_{\mathbb{R}} K$.

In the case $N=d+ \frac {d^2-d} 2 dim_{\mathbb{R}} K$, matrices $\Pi(x_i)$ form a basis in $\mathcal{H} (K^d)$. Hence $I_d$ can be expressed as their linear combination with real coefficients. Considering $\langle I_d, \Pi(x_i) \rangle$ for all $i$ we can see that these values must be all equal. Comparing the traces we conclude that the coefficients are all equal to $\frac d N$.
\end{proof}

A regular simplex with $d+ \frac {d^2-d} 2 dim_{\mathbb{R}} K$ points in $K\mathbb{P}^{d-1}$ is called a \textit{maximal simplex}.

Linear programming bounds are one of the main instruments for analyzing codes in various spaces. Here we give a short overview of these bounds.

For a metric space $M=K\mathbb{P}^{d-1}$ with its isometry group $G$, as a consequence of the Peter-Weyl theorem, $L^2(M)$ can be decomposed into mutually orthogonal subspaces $V^{(k)}$, where each space defines an irreducible unitary representation of $G$. For each irreducible representation $V^{(k)}$, $dim V^{(k)}=h_k$, one can take an orthonormal basis $\{e_1^{(k)},\ldots,e_{h_k}^{(k)}\}$ and define $P^{(k)}$:

$$P^{(k)} (x,y) = \sum\limits_{i=1}^{h_k} \overline{e_i^{(k)} (x)} e_i^{(k)} (y).$$

Since the representations are unitary, these functions depend only on the distance between its arguments. These functions are called \textit{zonal spherical functions}. The main feature of these functions is that they define positive-definite kernels \cite{sch42}, i.e. for any set of points $x_1,\ldots, x_N$ from M, the matrix $(P^{(k)}(x_i,x_j))$ is positive semidefinite. This immediately follows from the definition of $P^{(k)}$:

\begin{equation}\label{eqn:zsf}
\sum\limits_{i=1}^N \sum_{j=1}^N \overline{a_i} P^{(k)}(x_i,x_j)a_j = \sum\limits_{i=1}^{h_k} \overline{\left(\sum_{j=1}^N e_i^{(k)}(x_j) a_j\right)} \left(\sum_{j=1}^N e_i^{(k)}(x_j)a_j\right) \geq 0.
\end{equation}

A simple consequence of the positive definite condition (\ref{eqn:zsf}) is that the sum of elements of the matrix $\left(P_k(x_i,x_j)\right)$ is non-negative, and this observation is central to the Delsarte method. This method allows one to find kissing numbers in dimensions $8$ and $24$ \cite{lev79,odl79}, the best known asymptotic bounds for kissing numbers and for the density of sphere packings in Euclidean spaces \cite{kab78} (slightly improved in \cite{coh14}), and the general bound for maximal spherical $\varphi$-codes \cite{lev79,lev92} (see also \cite{boy18} for the unified treatment of this approach in various spaces). A certain strengthening of these linear conditions gives new solutions for the kissing problem in $\mathbb{R}^3$ \cite{ans04,mus06}, solution of the problem in $\mathbb{R}^4$ \cite{mus08a}, and the best current bounds for some sphere packing densities \cite{coh03}. The Delsarte method also accounts for the best known asymptotic bounds in some other spaces \cite{mce77,bac06,bar09b}, thereby representing one of the key tools for extremal problems of distance geometry. The Delsarte method has been recently extended to semidefinite programming bounds that rely on a more detailed version of the positivity constraints and on the corresponding positive definite functions on the space \cite{sch05, mus14, mus08b, bac08a, bac09b, bac10}.

It is convenient to express $P^{(k)}$ as a function of $\cos (\vartheta(x,y))$, where $\vartheta(x,y)$ is the normalized Fubini-Study metric. In this case zonal spherical functions are polynomials and $deg\, P^{(k)}=k$. In fact, they appear to be Jacobi polynomials $P_k^{\alpha,\beta}$, where $\alpha = (d-1) \frac {dim_{\mathbb{R}}\,K} {2} - 1$ and $\beta=\frac {dim_{\mathbb{R}}\,K} {2} - 1$.

For our purposes it will be better to express zonal spherical functions as functions of $|\langle x,y \rangle|$ so we define $Q_{K,d}^{(k)} (t) = P^{(k)} (2t^2-1)$, where $P^{(k)}$ is the corresponding polynomial for $K\mathbb{P}^{d-1}$. Then $Q_{K,d}^{(k)}$ are even polynomials of degree $2k$. Sometimes it is convenient to normalize these polynomials so that $Q_{K,d}^{(k)} (1) = 1$ but none of the results will depend on this normalization.

For all choices of $K$, $Q_{K,d}^{(1)}(t) = \frac {dt^2-1} {d-1}$ which immediately implies the uniform packing bound in projective spaces (known as the Welch bound \cite{wel74}, see also \cite{lem73}).

\begin{lem}\label{lem:tight-simplex}
If $x_1,\ldots,x_N\in K\mathbb{P}^{d-1}$ and $\max\limits_{i\neq j} |\langle x_i,x_j\rangle|=\alpha$, then

$$\alpha^2\geq \frac {N-d} {d(N-1)}.$$
\end{lem}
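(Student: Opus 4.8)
The plan is to deduce the inequality directly from the positive-definiteness of the first zonal spherical function $Q_{K,d}^{(1)}$, exactly as the surrounding discussion anticipates. First I would invoke the positive semidefiniteness of the kernel $\left(Q_{K,d}^{(1)}(|\langle x_i,x_j\rangle|)\right)$ recorded in (\ref{eqn:zsf}); specializing that inequality to the constant vector $a_1=\dots=a_N=1$ shows that the sum of all entries of this matrix is nonnegative, that is,
$$\sum_{i=1}^N\sum_{j=1}^N Q_{K,d}^{(1)}(|\langle x_i,x_j\rangle|)\geq 0.$$

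Next I would substitute the explicit formula $Q_{K,d}^{(1)}(t)=\frac{dt^2-1}{d-1}$. Since $d-1>0$, clearing the denominator and expanding the constant terms gives
$$d\sum_{i,j}|\langle x_i,x_j\rangle|^2 - N^2 \geq 0.$$
Then I would separate the diagonal from the off-diagonal contributions. The $N$ diagonal terms each equal $1$ because the representatives are unit vectors, while each of the $N(N-1)$ off-diagonal terms is at most $\alpha^2$ by the hypothesis $\max_{i\neq j}|\langle x_i,x_j\rangle|=\alpha$. Hence $\sum_{i,j}|\langle x_i,x_j\rangle|^2\leq N+N(N-1)\alpha^2$, and combining this with the previous display yields $d\bigl(N+N(N-1)\alpha^2\bigr)\geq N^2$. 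Dividing by $dN(N-1)$ and rearranging produces the claimed bound $\alpha^2\geq\frac{N-d}{d(N-1)}$.

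The computation is entirely routine once positive-definiteness is in hand, so I do not expect a genuine obstacle here; the only point requiring a little care is the bookkeeping that isolates the diagonal contribution of $N$ and the sign check that $d-1>0$ preserves the inequality. For completeness I would also note a self-contained alternative that bypasses the representation-theoretic input: writing $S=\sum_i \Pi(x_i)$, one has $Tr\, S=N$ and $\langle S,S\rangle=\sum_{i,j}|\langle x_i,x_j\rangle|^2$ by the identity $\langle \Pi(x),\Pi(y)\rangle=|\langle x,y\rangle|^2$, so the Cauchy--Schwarz inequality $Tr(S^2)\geq (Tr\,S)^2/d$ for the $d\times d$ Hermitian matrix $S$ reproduces $\sum_{i,j}|\langle x_i,x_j\rangle|^2\geq N^2/d$, after which the same splitting of diagonal and off-diagonal terms completes the argument.
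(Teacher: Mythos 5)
Your proof is correct and takes essentially the same route as the paper's: both specialize the positive semidefiniteness of the kernel $\bigl(Q_{K,d}^{(1)}(|\langle x_i,x_j\rangle|)\bigr)$ from (\ref{eqn:zsf}) to the all-ones vector and then bound the $N(N-1)$ off-diagonal terms by $\alpha$, the only cosmetic difference being that you substitute the explicit formula $Q_{K,d}^{(1)}(t)=\frac{dt^2-1}{d-1}$ before bounding, whereas the paper bounds the zonal values by $Q_{K,d}^{(1)}(\alpha)$ first. Your appended alternative via $Tr(S^2)\geq (Tr\,S)^2/d$ for $S=\sum_i\Pi(x_i)$ is also valid (it is the standard Welch-bound argument and meshes with the paper's identity $\langle\Pi(x),\Pi(y)\rangle=|\langle x,y\rangle|^2$), but it is a bonus rather than a genuinely different proof of the stated lemma.
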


\begin{proof}
Matrix $(Q_{K,d}^{(1)}(|\langle x_i,x_j\rangle|))$ is positive semidefinite so its sum of elements is non-negative:

$$0\leq N+N(N-1)Q_{K,d}^{(1)}(\alpha) = N \left(1+(N-1)\frac {d\alpha^2-1} {d-1}\right).$$

Therefore, $\alpha^2\geq \frac {N-d} {d(N-1)}.$
\end{proof}

Regular simplices reaching this bound are called \textit{tight simplices}.

As mentioned in the introduction, the linear programming bound for the smallest energy is due to Yudin. For the sake of completeness, we include a brief explanation of the Yudin linear programming bound in the case of codes in $K\mathbb{P}^{d-1}$.

\begin{theorem}[Yudin]\label{thm:yudin}
Assume $f(t)\geq h(t) = \sum\limits_{k=0}^l a_{k} Q_{K,d}^{(k)} (t)$ for all $t\in[0,1]$, where $a_k\geq 0$ for all $k$. Then
$$\sum\limits_{i\neq j} f(|\langle y_i,y_j\rangle|) \geq N^2 a_0 - N h (1),$$ where $\{y_1,\ldots,y_N\}$ is a set of unit vectors in $K^d$.
\end{theorem}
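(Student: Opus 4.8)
The plan is to estimate the energy sum from below by replacing $f$ with its minorant $h$ and then exploiting the positive-semidefiniteness of the kernels $Q_{K,d}^{(k)}$. First I would observe that since $f(t)\geq h(t)$ on $[0,1]$ and each $|\langle y_i,y_j\rangle|\in[0,1]$, we immediately get
\[
\sum_{i\neq j} f(|\langle y_i,y_j\rangle|) \;\geq\; \sum_{i\neq j} h(|\langle y_i,y_j\rangle|).
\]
So the whole problem reduces to bounding the sum of $h$ over ordered off-diagonal pairs. The natural move is to pass from the off-diagonal sum to the full sum over all pairs (including $i=j$) by adding and subtracting the diagonal contribution $N\,h(1)$, since $|\langle y_i,y_i\rangle|=1$ for unit vectors. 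This gives
\[
\sum_{i\neq j} h(|\langle y_i,y_j\rangle|) \;=\; \sum_{i,j} h(|\langle y_i,y_j\rangle|) - N\,h(1).
\]

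Next I would expand the full double sum using the expression $h(t)=\sum_{k=0}^l a_k\,Q_{K,d}^{(k)}(t)$ and interchange the order of summation, writing
\[
\sum_{i,j} h(|\langle y_i,y_j\rangle|) \;=\; \sum_{k=0}^l a_k \sum_{i,j} Q_{K,d}^{(k)}(|\langle y_i,y_j\rangle|).
\]
Here the key structural fact, established in the discussion preceding the statement, is that $Q_{K,d}^{(k)}(|\langle x,y\rangle|)=P^{(k)}(2|\langle x,y\rangle|^2-1)$ is a positive-definite kernel on $K\mathbb{P}^{d-1}$: the matrix $\bigl(Q_{K,d}^{(k)}(|\langle y_i,y_j\rangle|)\bigr)$ is positive semidefinite, so by the remark following inequality \eqref{eqn:zsf} its sum of all entries (obtained by taking the all-ones vector $a_i\equiv 1$ in that computation) is nonnegative. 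Hence for every $k$ we have $\sum_{i,j} Q_{K,d}^{(k)}(|\langle y_i,y_j\rangle|)\geq 0$.

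Now I would separate the $k=0$ term, where $Q_{K,d}^{(0)}\equiv 1$ (the constant zonal function for the trivial representation, with normalization $Q^{(0)}(1)=1$), so that $\sum_{i,j} Q_{K,d}^{(0)}(|\langle y_i,y_j\rangle|)=N^2$. Since every $a_k\geq 0$ by hypothesis and each remaining inner sum is nonnegative, all terms with $k\geq 1$ only help, and I may discard them to get
\[
\sum_{i,j} h(|\langle y_i,y_j\rangle|) \;\geq\; a_0\,N^2.
\]
Combining this with the diagonal correction yields exactly $\sum_{i\neq j} f(|\langle y_i,y_j\rangle|)\geq N^2 a_0 - N\,h(1)$, completing the argument. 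The only point requiring care—and the one I regard as the main obstacle—is justifying the normalization $Q_{K,d}^{(0)}\equiv 1$ and cleanly invoking positive-semidefiniteness to conclude each all-entry sum is nonnegative; once the $k=0$ term is correctly identified as contributing $a_0 N^2$, the rest is bookkeeping.
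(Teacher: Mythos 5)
Your proof is correct and follows essentially the same route as the paper's: minorize $f$ by $h$, add and subtract the diagonal contribution $N\,h(1)$, expand $h$ in the zonal functions $Q_{K,d}^{(k)}$, isolate the $k=0$ term contributing $a_0 N^2$, and discard the remaining terms since each all-entries sum of a positive semidefinite matrix is nonnegative via (\ref{eqn:zsf}). Your explicit justification that $Q_{K,d}^{(0)}\equiv 1$ is a point the paper leaves implicit, but otherwise the two arguments coincide step for step.
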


\begin{proof}
$$\sum\limits_{i\neq j} f(|\langle y_i,y_j\rangle|) \geq \sum\limits_{i\neq j} h(|\langle y_i,y_j\rangle|) = $$

$$= \sum\limits_{i,j=1}^N h(|\langle y_i,y_j\rangle|) - N h(1) =  \sum\limits_{i,j=1}^N \sum\limits_{k=0}^l a_{k} Q_{K,d}^{(k)}(|\langle y_i,y_j\rangle|) - N h(1) = $$

$$= \sum\limits_{k=0}^l a_{k} \sum\limits_{i,j=1}^N Q_{K,d}^{(k)}(|\langle y_i,y_j\rangle|) - N h(1) =$$

$$= a_0 \sum\limits_{i,j=1}^N 1 + \sum\limits_{k=1}^l a_{k} \sum\limits_{i,j=1}^N Q_{K,d}^{(k)}(|\langle y_i,y_j\rangle|) - N h(1) =$$

$$=  \sum\limits_{k=1}^l a_{k} \sum\limits_{i,j=1}^N Q_{K,d}^{(k)}(|\langle y_i,y_j\rangle|) + N^2 a_0 - N h(1) \geq$$

$$\geq N^2 a_0 - N h(1)$$
because $\sum\limits_{i,j=1}^N Q_{K,d}^{(k)}(|\langle y_i,y_j\rangle|)$ are the sums of values of positive semidefinite matrices for all $k$ and thus are non-negative.
\end{proof}

Yudin \cite{yud92} provided a more general version of this theorem in his paper. He allowed points to have weights (since he used the electrocstatic formulation of the problem, he called them "charges") and proved the weighted form of this result. The version of Yudin may be also interpreted as an energetic lower bound for discrete probability distributions.

\section{Tight frames and isotropic measures}\label{sect:tight}

By a \textit{tight frame} in $K^d$ we mean a finite multiset of vectors $v_1,\ldots, v_N$ satisfying the generalized version of Parseval's identity:

$$\sum\limits_{i=1}^N |\langle x,v_i \rangle|^2=A |x|^2,$$
for any $x\in K^d$. The constant $A$ is called a \textit{frame constant}.

The definition of a tight frame can be rewritten using matrices $\Pi(v)=vv^*$:

$$\sum\limits_{i=1}^N \Pi(v_i) = A I_d.$$

If $D\in K^{N\times N}$ is a matrix formed by the vectors of a tight frame $\{v_1,\ldots, v_N\}$ as columns, then $D D^* = \sum\limits_{i=1}^N v_i v_i^* = A I_d$. Using this we can find the spectral structure of the Gram matrix $D^* D$ of the frame:

\begin{equation}\label{eqn:spec}
D^* D D^* D = D^* (A I_d) D = A D^* D.
\end{equation}

Therefore, the Gram matrix may have only 0 and $A$ as its eigenvalues. The multiplicities of 0 is the dimension of the frame $d$ so the multiplicity of $A$ is $N-d$.

For any $v$, $tr\, \Pi(v) = |v|^2$ so $\sum\limits_{i=1}^N |v_i|^2 = A d$. It is often convenient to consider frames whose average square of norms is unit so that $A=\frac N d$. The example of such frames are \textit{unit norm tight frames} who contain only vectors of unit norm.

If $v_1,\ldots, v_n$ form a unit norm time frame and $|\langle v_i, v_j \rangle| = \alpha$ for all $i\neq j$ then such a frame is called an \textit{equiangular tight frame} (ETF). There is extensive literature devoted to constructions and properties of ETFs (see, for instance, \cite{str03, hol04, sus07} and \cite{bar15} for a more general setup in the real case).

The following lemma explains why equiangular tight frames and tight simplices defined above are essentially the same objects.

\begin{lem}\label{lem:tight-etf}
A set of points in $K\mathbb{P}^{d-1}$ forms a tight simplex if and only if their unit-length representatives form an ETF.
\end{lem}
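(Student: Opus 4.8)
The plan is to reduce the equivalence to a single scalar identity, exploiting the fact that both notions describe equiangular families of unit vectors. For a regular simplex the quantities $|\langle x_i, x_j\rangle|$ all equal a common value $\alpha$, and this is exactly the equiangularity demanded of an ETF; hence in both directions I may take the unit-length representatives $x_1,\dots,x_N$ to satisfy $|\langle x_i, x_j\rangle| = \alpha$ for all $i \neq j$. The only remaining task is to show that the Welch bound of Lemma \ref{lem:tight-simplex} is attained with equality precisely when $\sum_{i=1}^N \Pi(x_i) = \frac{N}{d} I_d$, that is, precisely when the family is a unit norm tight frame (the constant being forced to $A = N/d$ since $\sum_i |x_i|^2 = Ad = N$).

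The key step is to measure the distance of the frame operator $S = \sum_{i=1}^N \Pi(x_i)$ from the scalar matrix $\frac{N}{d} I_d$ in the inner product $\langle A, B\rangle = Re\, Tr\,(AB)$ on $\mathcal{H}(K^d)$. Expanding $\langle S - \frac{N}{d}I_d,\, S - \frac{N}{d}I_d\rangle$ and inserting the three evaluations $\langle I_d, I_d\rangle = d$, $\langle S, I_d\rangle = \sum_i Tr\,\Pi(x_i) = N$, and $\langle S, S\rangle = \sum_{i,j} \langle \Pi(x_i), \Pi(x_j)\rangle = \sum_{i,j} |\langle x_i, x_j\rangle|^2 = N + N(N-1)\alpha^2$, where the last uses the identity $\langle \Pi(x), \Pi(y)\rangle = |\langle x, y\rangle|^2$ recorded above, I obtain
$$\Big\langle S - \tfrac{N}{d}I_d,\ S - \tfrac{N}{d}I_d\Big\rangle = N + N(N-1)\alpha^2 - \frac{N^2}{d}.$$

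Because the left-hand side is a squared norm, it vanishes if and only if $S = \frac{N}{d} I_d$, while setting the right-hand side to zero and solving gives $\alpha^2 = \frac{N-d}{d(N-1)}$, which is exactly the equality case of the Welch bound. This one identity delivers both implications simultaneously: a tight simplex meets the Welch bound, so the right-hand side vanishes, so $S = \frac{N}{d} I_d$ and the representatives form a tight frame, which being equiangular and unit norm is an ETF; conversely an ETF has $S = \frac{N}{d} I_d$, forcing $\alpha^2$ down to the Welch value, so the projective points form a tight simplex.

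I do not expect a genuine obstacle here; the computation is short and self-contained. The one point meriting care is the bookkeeping of the forced frame constant $A = N/d$, so that the target in the distance computation is correctly $\frac{N}{d} I_d$ rather than an unspecified multiple of the identity. A minor auxiliary remark is that $\alpha < 1$ whenever $d > 1$, ensuring the $N$ points stay distinct in $K\mathbb{P}^{d-1}$ and really constitute a simplex.
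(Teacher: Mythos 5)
Your proof is correct, and in the substantive direction it takes a genuinely different route from the paper's. For tight simplex $\Rightarrow$ ETF, the paper stays inside its linear programming framework: equality in Lemma \ref{lem:tight-simplex} forces $\sum_{i,j} Q_{K,d}^{(1)}(|\langle v_i,v_j\rangle|)=0$, the equality case of the positive-definiteness computation (\ref{eqn:zsf}) then yields $\sum_j e_i^{(1)}(v_j)=0$ for every basis function of $V^{(1)}$, and evaluating the kernel against an arbitrary unit vector $x$ gives $\sum_i |\langle v_i,x\rangle|^2=\frac{N}{d}|x|^2$. You instead run a frame-potential computation: expanding $\langle S-\frac{N}{d}I_d,\,S-\frac{N}{d}I_d\rangle = N+N(N-1)\alpha^2-\frac{N^2}{d}$ in the trace inner product on $\mathcal{H}(K^d)$, which in one stroke re-proves the Welch bound (nonnegativity of the left side) and identifies its equality case with $S=\frac{N}{d}I_d$, so both implications drop out of a single identity. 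Your expansion is arithmetically correct, your forcing of the frame constant $A=N/d$ from $\sum_i |x_i|^2=Ad=N$ is exactly the right bookkeeping, and the squared-norm claim is valid over all three fields since for Hermitian $T$ one has $Re\,Tr(T^2)=Re\,Tr(TT^*)=\sum_{i,j}|T_{ij}|^2$, which vanishes only at $T=0$ even for $K=\mathbb{H}$. What your approach buys is elementarity and self-containment: it needs only the recorded identity $\langle \Pi(x),\Pi(y)\rangle=|\langle x,y\rangle|^2$ and no zonal spherical functions, and it even subsumes Lemma \ref{lem:tight-simplex} itself. What the paper's route buys is generality: the equality-case analysis of (\ref{eqn:zsf}) applies verbatim to the higher-degree kernels $Q_{K,d}^{(k)}$, which is precisely the mechanism exploited later for designs and the equality analysis in Theorem \ref{thm:q-measures} and Subsection \ref{subsect:pot_opt}, whereas your norm identity is special to the degree-one kernel. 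Your closing remark is also correctly placed: in the ETF-to-simplex direction the derived value $\alpha^2=\frac{N-d}{d(N-1)}<1$ (for $d>1$) guarantees the $N$ projective points are distinct, while in the converse direction distinctness holds by the definition of a regular simplex.
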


\begin{proof}
If $\{v_1,\ldots, v_N\}$ is an ETF with $|\langle x_i,x_j \rangle|=\alpha$ for all $i\neq j$ then

$$\frac N d = \sum\limits_{i=1}^N |\langle v_1,v_i \rangle|^2 = 1 + (N-1) \alpha^2$$
so $\alpha^2 = \frac {N-d} {d(N-1)}$ and this set is a tight simplex.

Now assume $\{v_1,\ldots, v_N\}$ is a tight simplex then the proof of Lemma \ref{lem:tight-simplex} implies that\\ $\sum\limits_{i,j=1}^N Q_{K,d}^{(1)} (\langle v_i,v_j\rangle)=0$. From (\ref{eqn:zsf}) we deduce that $\sum_{j=1}^N e_i^{(1)}(v_j) = 0$ for all elements $e_i^{(1)}$ of the orthonormal basis of the corresponding representation $V^{(1)}$. Hence, for any unit-length $x$,

$$\sum\limits_{i=1}^N Q_{K,d}^{(1)} (\langle v_i,x\rangle) = \sum\limits_{i=1}^{h_k} \left(\overline{\sum\limits_{j=1}^N e_i^{(1)}(v_j)}\right) e_i^{(1)}(x) = 0.$$

This implies $\sum\limits_{i=1}^N |\langle v_i,x \rangle|^2=\frac N d |x|^2$ for a unit-length $x$ and, subsequently, for any $x\in K^d$. Therefore, the set $\{v_1,\ldots, v_N\}$ is a tight frame by definition.
\end{proof}

By Lemmas \ref{lem:max-simplex} and \ref{lem:tight-etf}, any maximal simplex is also a tight simplex so we can find the value of its scalar product by Lemma \ref{lem:tight-simplex}:

$$\beta^2=\frac {N-d} {d(N-1)} = \frac {\frac {d^2-d} 2 dim_{\mathbb{R}} K} {d(d+\frac {d^2-d} 2 dim_{\mathbb{R}} K-1)} = \frac 1 {d+2(dim_{\mathbb{R}} K)^{-1}}.$$

For $K=\mathbb{R}$, maximal simplices are known to exist for $d=1, 2, 3, 7, 23$. The necessary condition for their existence in all unknown cases is $d=(2m+1)^2-2$, where $m\in\mathbb{N}$. There are, however, both combinatorial \cite{mak02} and number-theoretic results \cite{ban04,neb12} excluding certain values of $d$ from the list of potential dimensions where maximal simplices exist. The smallest unknown dimension at the moment is $119$. The general bound on the size of a regular simplex in $\mathbb{RP}^{d-1}$ set by Lemma \ref{lem:max-simplex} was improved in \cite{gla18}.

For $K=\mathbb{C}$, maximal simplces are called “symmetric, informationally complete, positive operator-valued measures” (SIC-POVMs). Zauner conjectured that SIC-POVMs exist for all dimensions $d$ \cite{zau99}. This conjecture is known to be true for all $d\leq 21$ and is confirmed numerically for all dimensions up to 181 \cite{fuc17,app19}.

The only known maximal simplex in $\mathbb{HP}^{d-1}$ is the simplex with 15 points in $\mathbb{HP}^2$ constructed by Cohn, Kumar, and Minton in \cite{coh16}. They also conjectured that no other maximal quaternionic simplices exist.

By an \textit{isotropic measure} on $K^d$ we mean a Borel probability measure satisfying the condition $\mathbb{E}_{x\sim\mu} xx^*=\frac 1 d I_{d}$. Note that any uniform distribution over a finite tight frame, with $N$ vectors in $K^d$ and with the frame constant $\frac N d$, is isotropic. Essentially, this notion is a measure-theoretic generalization of tight frames and, as such, it is sometimes known in the literature as a \textit{probabilistic tight frame} \cite{ehl12}.

\section{Moments of isotropic measures}\label{sect:moments}

In this section, we develop a linear programming approach for finding optimizing the $q$-th moments of isotropic measures. Then we use this approach for finding new upper bounds for moments, analyze these bounds, discuss computational results, and talk about potential maximizers reaching the linear programming bound.

For a Borel measure $\mu$ in $K^d$, by its $q$-th moment we mean

$$\mathbb{E}_{x,y\sim\mu} |\langle x, y \rangle|^q =  \int\int |\langle x,y \rangle|^q d\mu(x)d\mu(y).$$

For a tight frame $\{y_1,\ldots,y_N\}\subset K^d$ with the frame constant $\frac N d$, the $q$-energy of the frame is

$$\sum\limits_{i\neq j} |\langle y_i, y_j\rangle|^q.$$

Note that the definitions are slightly different and the $q$-th moment of the uniform distribution over the tight frame is not the same as the $q$-energy of the frame as it also includes the sum of diagonal values, i.e. $\sum\limits_{i=1}^N |\langle y_i, y_i \rangle|^q$.

We are interested in the two types of problems. The first type is measure-theoretic: given the space $K^d$, maximize the $q$-th moment of an isotropic measure in the space. The second type is the discrete problem for tight frames: given the space $K^d$ and the number of vectors $N$ in a tight frame, maximize the $q$-energy of a tight frame.

These problems are motivated by the problems of finding lower packing and energy bounds for projective codes. This connection was discovered by Bukh and Cox who used it for finding new packing bounds. We talk about it in more detail in Section \ref{sect:dual}. Recently similar optimization problems for tight frames and for various energy potentials were also studied in \cite{cah19}.
 
\subsection{Yudin-type linear programming bounds for isotropic measures}

For the following theorem, we use a slightly more general case of energy potentials than the case of the $q$-th moments.

\begin{theorem}\label{thm:moments}
Assume $0\leq f(t)\leq a_0 + a_1 t^2 + \sum\limits_{k\geq 2} a_{k} Q_{K,d}^{(k)} (t)$ for all $t\in[0,1]$, where $a_0, a_1\geq 0$ and $a_{k}\leq 0$ for all $k\geq 2$. For $q\in[1,2]$, let $P(x,y)=|x|^q |y|^q f\left(|\langle \frac x {|x|}, \frac y {|y|} \rangle|\right)$ if neither $x$ nor $y$ is 0 and $P(x,y)=0$ otherwise. Then
$$\mathbb{E}_{x,y\sim\mu} P(x,y) \leq \frac {a_1} {d} + a_0,$$ where $\mu$ is any isotropic measure in $K^d$.
\end{theorem}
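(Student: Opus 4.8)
The plan is to integrate the pointwise polynomial upper bound on $f$ against $\mu\times\mu$ and then control the three resulting groups of terms (the constant $a_0$, the quadratic term $a_1t^2$, and the higher zonal terms $k\ge2$) separately. First I would note that since $0\le f$ and $|x|^q|y|^q\ge0$, the convention $P(x,y)=0$ on the locus where $x$ or $y$ vanishes is harmless, and that wherever $x,y\neq0$ one has, writing $\hat x=x/|x|$,
$$0\le P(x,y)\le |x|^q|y|^q\Bigl(a_0+a_1|\langle\hat x,\hat y\rangle|^2+\sum_{k\ge2}a_kQ_{K,d}^{(k)}(|\langle\hat x,\hat y\rangle|)\Bigr).$$
Integrating against $\mu\times\mu$ with $x,y$ independent reduces the claim to bounding the three pieces termwise. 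For the constant piece, taking the trace of the isotropy identity $\mathbb{E}_{x\sim\mu}xx^*=\frac1dI_d$ gives $\mathbb{E}_{x\sim\mu}|x|^2=1$; since $q\le2$, concavity of $t\mapsto t^{q/2}$ and Jensen yield $\mathbb{E}_{x\sim\mu}|x|^q\le(\mathbb{E}|x|^2)^{q/2}=1$, so as $a_0\ge0$ the constant piece contributes at most $a_0(\mathbb{E}|x|^q)^2\le a_0$.

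For the higher terms I would use that $Q_{K,d}^{(k)}(|\langle\hat x,\hat y\rangle|)$ is the zonal function $\sum_i\overline{e_i^{(k)}(\hat x)}e_i^{(k)}(\hat y)$ on the projective space. By independence, each integral equals $\sum_i\bigl|\mathbb{E}_{x}|x|^qe_i^{(k)}(\hat x)\bigr|^2\ge0$, exactly as in the positive-definiteness computation \eqref{eqn:zsf}, so multiplying by $a_k\le0$ makes every such contribution nonpositive. Thus the higher terms only improve the bound, and the entire problem collapses onto the quadratic piece.

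The crux, and the step I expect to be the main obstacle, is the quadratic term. Writing $M=\mathbb{E}_{x\sim\mu}|x|^q\,\hat x\hat x^*$, a short computation gives $\mathbb{E}_{x,y}|x|^q|y|^q|\langle\hat x,\hat y\rangle|^2=Re\,Tr(M^2)=\sum_i\lambda_i^2$, where $\lambda_1,\dots,\lambda_d\ge0$ are the eigenvalues of the Hermitian positive semidefinite matrix $M$. Since $a_1\ge0$, it suffices to prove $\sum_i\lambda_i^2\le\frac1d$, and this is where both isotropy and the restriction $q\le2$ genuinely enter (the naive decomposition of $t^2$ into $Q_{K,d}^{(0)}$ and $Q_{K,d}^{(1)}$ pushes in the wrong direction, so a direct operator estimate is required). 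In an orthonormal eigenbasis $u_1,\dots,u_d$ of $M$ one has $\lambda_i=\mathbb{E}_x|x|^q|\langle u_i,\hat x\rangle|^2$, while isotropy supplies the companion identity $\mathbb{E}_x|x|^2|\langle u_i,\hat x\rangle|^2=u_i^*(\tfrac1dI_d)u_i=\tfrac1d$. Setting $p_i=\mathbb{E}_x|\langle u_i,\hat x\rangle|^2$ (so $\sum_ip_i\le1$), Jensen applied to the probability measure proportional to $|\langle u_i,\hat x\rangle|^2\,d\mu$ gives $\lambda_i\le d^{-q/2}p_i^{1-q/2}$, and then concavity of $t\mapsto t^{2-q}$ on $[0,1]$ (valid since $2-q\in[0,1]$) yields $\sum_i\lambda_i^2\le d^{-q}\sum_ip_i^{2-q}\le d^{-q}\cdot d^{q-1}=\tfrac1d$. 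Combining the three estimates gives $\mathbb{E}_{x,y\sim\mu}P(x,y)\le a_0+\frac{a_1}{d}$. The delicate points to get right are the measure-theoretic bookkeeping at $x=0$ and at $p_i=0$ (both contribute nothing) and the two nested uses of Jensen that convert the single matrix constraint $\mathbb{E}_x|x|^2\hat x\hat x^*=\frac1dI_d$ into the scalar bound on $\sum_i\lambda_i^2$.
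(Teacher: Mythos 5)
Your proof is correct, but it reaches the key estimate by a genuinely different route than the paper. The paper first reduces to measures uniformly distributed over a finite support (invoking weak-* density of such measures among isotropic measures) and then works with the resulting tight frame: the $k\ge 2$ terms are discarded exactly as you do, via positive semidefiniteness of the matrices $\bigl(Q_{K,d}^{(k)}(|\langle s_i,s_j\rangle|)\bigr)$, and the constant term is handled by the same Jensen estimate $\frac1N\sum_i l_i^q\le 1$; but the quadratic term is bounded by applying Cauchy--Schwarz to the double sum after the splitting $l_i^q l_j^q|\langle s_i,s_j\rangle|^2=\bigl(l_i l_j^{q-1}|\langle s_i,s_j\rangle|\bigr)\bigl(l_i^{q-1}l_j|\langle s_i,s_j\rangle|\bigr)$, then invoking the frame identity $\sum_j|\langle y_i,y_j\rangle|^2=\frac Nd|y_i|^2$ and one further Jensen step. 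You instead stay at the level of an arbitrary isotropic measure and replace Cauchy--Schwarz by a spectral argument: $\mathbb{E}\,|x|^q|y|^q|\langle\hat x,\hat y\rangle|^2=\mathrm{Re}\,\mathrm{Tr}(M^2)$ for $M=\mathbb{E}\,|x|^q\hat x\hat x^*$, and $\sum_i\lambda_i^2\le\frac1d$ via two applications of Jensen in the eigenbasis, using the companion identity $\mathbb{E}\,|x|^2|\langle u_i,\hat x\rangle|^2=\frac1d$ supplied by isotropy. Your version buys two things: it bypasses the paper's density reduction, which is asserted without proof and is in fact somewhat delicate (the functional involves the unbounded weight $|x|^q|y|^q$ on the noncompact space $K^d$, so weak-* continuity is not automatic), and it treats the measure-theoretic edge cases (mass at the origin, $p_i=0$, whence $\sum_i p_i\le 1$ rather than $=1$) explicitly and correctly. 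What the paper's route buys is elementarity: it needs no eigendecomposition, whereas for $K=\mathbb{H}$ your spectral step silently uses the diagonalizability of quaternionic Hermitian matrices by symplectic unitaries with real eigenvalues and the identity $\mathrm{Re}\,\mathrm{Tr}(AB)=\mathrm{Re}\,\mathrm{Tr}(BA)$ --- both standard, but worth a sentence. Your observation that the expansion $t^2=\frac1d+\frac{d-1}{d}Q_{K,d}^{(1)}(t)$ pushes the wrong way (it yields a lower bound, since the $Q^{(1)}$ coefficient is positive) correctly identifies why a direct operator estimate is needed; this mirrors why the paper also treats the quadratic term separately. One shared informality: both your argument and the paper's interchange the possibly infinite sum over $k\ge 2$ with the integration (resp.\ finite double sum) without justification, which is harmless in the intended applications where $h$ is a polynomial.
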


\begin{proof}
For the sake of simplicity we will prove this theorem in the case of measures uniformly distributed over a finite support. The set of such measures is weak-* dense in the set of all isotropic measures so this will provide us with the general result as well.

Let $supp(\mu)=\{y_1,\ldots,y_N\}$. We can assume that none of these vectors is 0 because otherwise the uniform distribution over the remaining $N-1$ vectors would give a larger expected value.

For brevity, denote $l_i=|y_i|$, $s_i=y_i/|y_i|$ for all $i$. We also use notation $L$ for the row vector $(l^q_1,\ldots,l^q_N)\in\mathbb{R}^N_+$ and $S$ for the real $N\times N$ matrix $\{|\langle s_i,s_j\rangle|\}$. For any real function $g$, the matrix $\{g(|\langle s_i,s_j\rangle|)\}$ will be denoted by $g(S)$. Then

$$\mathbb{E}_{x,y\sim\mu} |x|^q |y|^q f\left(|\langle \frac x {|x|}, \frac y {|y|} \rangle|\right) =\frac 1 {N^2} \sum\limits_{i,j=1}^N l_i^q l_j^q f\left(|\langle s_i, s_j \rangle|\right)= \frac 1 {N^2} L f(S) L^t \leq$$

$$\leq \frac 1 {N^2} \left(a_0 \left(\sum\limits_{i=1}^N l_i^q\right)^2 + a_1 \sum\limits_{i,j=1}^N l_i^q l_j^q |\langle s_i, s_j \rangle|^2 +  \sum\limits_{k\geq 2} a_{k} L Q_{K,d}^{(k)} (S) L^t \right) \leq$$

\begin{equation}\label{ineq:1}
\leq \frac 1 {N^2} \left(a_0 \left(\sum\limits_{i=1}^N l_i^q\right)^2 + a_1 \sum\limits_{i,j=1}^N l_i^q l_j^q |\langle s_i, s_j \rangle|^2 \right),
\end{equation}
because all matrices $Q_{K,d}^{(k)} (S)$ are positive semidefinite and coefficients $a_k$ are non-positive for $k\geq 2$.

From Jensen's inequality,

\begin{equation}\label{ineq:2}
\frac 1 N \sum\limits_{i=1}^N l_i^q \leq\left(\frac 1 N \sum\limits_{i=1}^N l_i^2 \right)^{q/2}=1.
\end{equation}

By the Cauchy-Schwartz inequality,

$$ \sum\limits_{i,j=1}^N l_i^q l_j^q |\langle s_i, s_j \rangle|^2 =  \sum\limits_{i,j=1}^N l_i l^{q-1}_j |\langle s_i, s_j \rangle|  \times l^{q-1}_i l_j |\langle s_i, s_j \rangle| \leq$$

$$\leq \left(\sum\limits_{i,j=1}^N l^2_i l^{2q-2}_j |\langle s_i, s_j \rangle|^2 \right)^{\frac 1 2} \times  \left(\sum\limits_{i,j=1}^N l^{2q-2}_i l^2_j |\langle s_i, s_j \rangle|^2 \right)^{\frac 1 2}.$$
For a fixed $i$,

$$\sum\limits_{j=1}^N l^{2q-2}_i l^2_j |\langle s_i, s_j \rangle|^2 = l^{2q-4}_i \sum\limits_{j=1}^N l^2_i l^2_j |\langle s_i, s_j \rangle|^2 = l^{2q-4}_i \sum\limits_{j=1}^N |\langle y_i, y_j \rangle|^2 = l^{2q-4}_i \frac {N |y_i|^2} d= \frac N d l^{2q-2}_i.$$
By Jensen's inequality,

$$\frac 1 N \sum\limits_{i=1}^N l_i^{2q-2} \leq\left(\frac 1 N \sum\limits_{i=1}^N l_i^2 \right)^{q-1}=1.$$
Therefore, we get

\begin{equation}\label{ineq:3}
\sum\limits_{i,j=1}^N l_i^q l_j^q |\langle s_i, s_j \rangle|^2 \leq \frac N d \sum\limits_{j=1}^N l^{2q-2}_i \leq \frac {N^2} d.
\end{equation}

The statement of the theorem follows from inequalities (\ref{ineq:1})-(\ref{ineq:3}).

\end{proof}

\begin{remark}
If the isotropic measure is known to be confined to a unit sphere, Theorem \ref{thm:moments} covers essentially all two-point potentials preserved by isometries. Moreover, the conditions of the theorem may be weakened as the signs of the coefficients $a_0$ and $a_1$ become irrelevant.
\end{remark}

The direct analogue of the Yudin bound for codes with a fixed number of points is described in the following corollary.

\begin{corollary}\label{cor:yudin}
Assume $0\leq f(t)\leq h(t) = a_0 + a_1 t^2 + \sum\limits_{k\geq 2} a_{k} Q_{K,d}^{(k)} (t)$ for all $t\in[0,1]$, where $a_0, a_1\geq 0$ and $a_{k}\leq 0$ for all $k\geq 2$. For $q\in[1,2]$, let $P(x,y)=|x|^q |y|^q f\left(|\langle \frac x {|x|}, \frac y {|y|} \rangle|\right)$ if neither $x$ nor $y$ is 0 and $P(x,y)=0$ otherwise. Then
$$\sum\limits_{i\neq j} P(y_i,y_j) \leq N^2 \left(\frac {a_1} {d} + a_0\right) - N h (1),$$ where $\{y_1,\ldots,y_N\}$ is a tight frame in $K^d$ with the frame constant $\frac N d$.
\end{corollary}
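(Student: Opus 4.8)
The plan is to mirror the proof of Theorem \ref{thm:moments}, but instead of passing immediately to the expectation over all ordered pairs (which silently includes the diagonal), I would keep track of the diagonal contribution and use the hypothesis $f\le h$ to replace $f$ by its majorant $h$ on the off-diagonal entries only. As before, write $l_i=|y_i|$, $s_i=y_i/|y_i|$, set $L=(l_1^q,\dots,l_N^q)$, and let $h(S)$ denote the matrix $\{h(|\langle s_i,s_j\rangle|)\}$; for the frame we have $\sum_i l_i^2=N$ because the frame constant is $N/d$.

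First I would use $0\le f(t)\le h(t)$ on $[0,1]$ to obtain, entrywise on the off-diagonal,
$$\sum_{i\neq j} P(y_i,y_j)=\sum_{i\neq j} l_i^q l_j^q f(|\langle s_i,s_j\rangle|)\le \sum_{i\neq j} l_i^q l_j^q h(|\langle s_i,s_j\rangle|)=L\,h(S)\,L^t-h(1)\sum_{i=1}^N l_i^{2q},$$
since the diagonal entries of $h(S)$ equal $h(1)$. The full quadratic form $L\,h(S)\,L^t$ is then handled exactly as in (\ref{ineq:1})--(\ref{ineq:3}): expanding $h$ and discarding the terms $a_k\,L\,Q_{K,d}^{(k)}(S)\,L^t$ for $k\ge 2$ (nonpositive, since $a_k\le 0$ and the matrices are positive semidefinite) gives
$$L\,h(S)\,L^t\le a_0\Big(\sum_i l_i^q\Big)^2+a_1\sum_{i,j} l_i^q l_j^q|\langle s_i,s_j\rangle|^2\le a_0 N^2+a_1\frac{N^2}{d},$$
where the two final estimates are precisely (\ref{ineq:2}) (squared) and (\ref{ineq:3}), and $a_0,a_1\ge 0$ let me use them with the correct sign.

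It remains to bound the subtracted diagonal from below. Since $h(1)\ge f(1)\ge 0$, it suffices to show $\sum_i l_i^{2q}\ge N$. This follows from Jensen's inequality for the convex function $u\mapsto u^q$ (convex because $q\ge 1$): $\frac{1}{N}\sum_i l_i^{2q}\ge\big(\frac{1}{N}\sum_i l_i^2\big)^q=1$. Combining the three displays yields $\sum_{i\neq j} P(y_i,y_j)\le N^2(a_0+a_1/d)-N\,h(1)$, as claimed.

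The only real subtlety, and the step I would be most careful about, is the treatment of the diagonal: one must invoke $f\le h$ to convert the off-diagonal $f$-sum into an $h$-sum so that the subtracted diagonal term is $h(1)$ per point rather than $f(1)$, and then exploit $h(1)\ge 0$ together with the reverse-normalization estimate $\sum_i l_i^{2q}\ge N$. This is the exact analogue of how Yudin's Theorem \ref{thm:yudin} produces the $-N\,h(1)$ correction from the diagonal, with the inequality direction reversed and with the frame vectors allowed to have unequal norms. Everything else is a direct transcription of the estimates already established in Theorem \ref{thm:moments}.
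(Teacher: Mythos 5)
Your proof is correct and follows essentially the same route as the paper's: bound the off-diagonal $f$-sum by the $h$-sum, complete it to the full double sum at the cost of subtracting $h(1)\sum_i |y_i|^{2q}$, bound the full sum by $N^2(a_0+a_1/d)$ via the estimates from Theorem \ref{thm:moments}, and finish with Jensen's inequality $\sum_i |y_i|^{2q}\geq N$. Your only addition is making explicit that $h(1)\geq f(1)\geq 0$ is needed for the final step, a point the paper leaves implicit.
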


\begin{proof}
The uniform distribution over a tight frame in $K^d$ with the frame constant $\frac N d$ is an isotropic measure so, following the proof of Theorem \ref{thm:moments},

$$\sum\limits_{i\neq j} P(y_i,y_j) \leq \sum\limits_{i\neq j}^N |y_i|^q |y_j|^q h\left(|\langle \frac {y_i} {|y_i|}, \frac {y_j} {|y_j|} \rangle\right) =$$

$$= \sum\limits_{i,j=1}^N |y_i|^q |y_j|^q h\left(|\langle \frac {y_i} {|y_i|}, \frac {y_j} {|y_j|} \rangle\right) - \sum\limits_{i=1}^N |y_i|^{2q} h(1) \leq$$

$$\leq  N^2 (\frac {a_1} {d} + a_0) - \sum\limits_{i=1}^N |y_i|^{2q} h(1).$$
The statement of the corollary then follows from Jensen's inequality $\sum\limits_{i=1}^N |y_i|^{2q} \geq N.$
\end{proof}

The conditions of Theorem \ref{thm:moments} are satisfied when $f(t)=t^q$ and $P(x,y)=|\langle x, y \rangle|^q$ so it immediately implies the following corollary.

\begin{corollary}\label{cor:moments}
Given $q\in[1,2]$, assume $t^q\leq a_0 + a_1 t^2 + \sum\limits_{k\geq 2} a_{k} Q_{K,d}^{(k)} (t)$ for all $t\in[0,1]$, where $a_0, a_1\geq 0$ and $a_{k}\leq 0$ for all $k\geq 2$. Then
$$\mathbb{E}_{x,y\sim\mu} |\langle x, y \rangle|^q \leq \frac {a_1} {d} + a_0,$$ where $\mu$ is any isotropic measure in $K^d$.
\end{corollary}

Similarly this works for the case with a fixed number of points.

\begin{corollary}\label{cor:yudin-moments}
Given $q\in[1,2]$, assume $t^q\leq h(t) = a_0 + a_1 t^2 + \sum\limits_{k\geq 2} a_{k} Q_{K,d}^{(k)} (t)$ for all $t\in[0,1]$, where $a_0, a_1\geq 0$ and $a_{k}\leq 0$ for all $k\geq 2$. Then
$$\sum\limits_{i\neq j} |\langle y_i, y_j \rangle|^q \leq N^2 \left(\frac {a_1} {d} + a_0\right) - N h(1),$$ where $\{y_1,\ldots,y_N\}$ is a tight frame in $K^d$ with the frame constant $\frac N d$.
\end{corollary}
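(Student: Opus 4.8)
The plan is to obtain this statement as an immediate specialization of Corollary \ref{cor:yudin}, taking $f(t)=t^q$. The bulk of the analytic work—the Jensen and Cauchy--Schwarz estimates that genuinely require $q\in[1,2]$—has already been absorbed into Theorem \ref{thm:moments} and inherited by Corollary \ref{cor:yudin}, so at this point only a verification of hypotheses and a one-line homogeneity computation remain.

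First I would check that $f(t)=t^q$ meets the requirements of Corollary \ref{cor:yudin}. The lower bound is automatic: for $t\in[0,1]$ and $q\in[1,2]$ we have $t^q\geq 0$, so $0\leq f(t)$. The upper bound $f(t)\leq h(t)$ with $h(t)=a_0+a_1 t^2+\sum_{k\geq 2}a_k Q_{K,d}^{(k)}(t)$, together with the sign conditions $a_0,a_1\geq 0$ and $a_k\leq 0$ for $k\geq 2$, is exactly the standing assumption of the corollary, so nothing beyond restating it is needed.

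Next I would identify the potential $P$ produced by this choice of $f$. By definition $P(x,y)=|x|^q|y|^q f\bigl(|\langle x/|x|, y/|y|\rangle|\bigr)$, and since $|\langle x/|x|, y/|y|\rangle| = |\langle x,y\rangle|/(|x|\,|y|)$, substituting $f(t)=t^q$ gives $P(x,y)=|x|^q|y|^q\,\bigl(|\langle x,y\rangle|/(|x|\,|y|)\bigr)^q = |\langle x,y\rangle|^q$. Hence for the frame $\{y_1,\dots,y_N\}$ the left-hand side $\sum_{i\neq j}P(y_i,y_j)$ appearing in Corollary \ref{cor:yudin} coincides exactly with $\sum_{i\neq j}|\langle y_i,y_j\rangle|^q$.

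With both hypotheses verified and the potential identified, the bound $\sum_{i\neq j}|\langle y_i,y_j\rangle|^q \leq N^2(a_1/d + a_0) - N h(1)$ is literally the conclusion of Corollary \ref{cor:yudin} for this $f$. I do not expect a genuine obstacle here: the only point deserving care is that the homogeneity cancellation producing $|\langle x,y\rangle|^q$ is valid for every admissible exponent, and that the restriction $q\in[1,2]$ is \emph{inherited} rather than newly imposed—it is already precisely what legitimizes Corollary \ref{cor:yudin}, whose proof traces back to the moment estimates of Theorem \ref{thm:moments}.
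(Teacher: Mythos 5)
Your proof is correct and matches the paper's own route: the paper likewise obtains this corollary by specializing Corollary \ref{cor:yudin} to $f(t)=t^q$, observing that then $P(x,y)=|\langle x,y\rangle|^q$ (which also holds when $x$ or $y$ is zero, so the two definitions of $P$ agree everywhere). Your verification of the hypotheses and the homogeneity cancellation is exactly the intended argument.
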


Generally, the bounds from Theorem \ref{thm:moments} and Corollaries \ref{cor:yudin}-\ref{cor:yudin-moments} are linear programming bounds with the countable number of variables (coefficients $a_0, a_1, \ldots$) and infinitely many constraints (linear inequalities $f(t)\leq a_0 + a_1 t^2 + \sum\limits_{k\geq 2} a_{k} Q_{K,d}^{(k)} (t)$ must hold for each $t\in [0,1]$). The typical way to find reasonable bounds using such statements is to have a suspect for the optimum with few distances and construct an Hermite interpolant to $f(t)$ with nodes defined by these distances \cite{yud92,coh07}.

\subsection{Applications of the LP bounds}

As the first application of this approach, we can show that tight simplices (ETFs) are optimal for the $|\langle x,y \rangle|^q$ potential, $q\in [1,2]$, among tight frames with the same number of points.

\begin{theorem}\label{thm:ETF_opt}
Given $q\in [1,2]$, for a tight frame $\{y_1,\ldots,y_N\}$ in $K^d$ with the frame constant $\frac N d$,
$$\sum\limits_{i\neq j} |\langle y_i, y_j \rangle|^q \leq (N^2-N) \left(\frac {N-d} {d(N-1)}\right)^{\frac q 2}.$$
For $q\in [1,2)$, the inequality is sharp if and only if the frame consists of unit-length representatives of a tight simplex in $K\mathbb{P}^{d-1}$.
\end{theorem}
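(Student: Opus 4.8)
The plan is to apply Corollary \ref{cor:yudin-moments} with a single well-chosen majorant $h$ and then verify that the resulting bound collapses exactly to $(N^2-N)\beta^q$, where $\beta^2=\frac{N-d}{d(N-1)}$ is the squared inner product of a tight simplex from Lemma \ref{lem:tight-simplex}. Writing $u=t^2$, the target potential becomes $t^q=u^{q/2}$, and since $q/2\le 1$ the function $u\mapsto u^{q/2}$ is concave on $[0,1]$. I would therefore take $h$ to be its tangent line at $u=\beta^2$, namely $h(t)=a_0+a_1t^2$ with $a_1=\tfrac q2\beta^{q-2}$ and $a_0=(1-\tfrac q2)\beta^q$; no zonal terms $Q_{K,d}^{(k)}$ with $k\ge 2$ are needed (all higher $a_k=0$). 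Concavity guarantees $t^q\le h(t)$ for all $t\in[0,1]$ with equality at $t=\beta$, and both $a_0,a_1\ge 0$ for $q\in[1,2]$, so the hypotheses of Corollary \ref{cor:yudin-moments} are met.

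The second step is the arithmetic simplification of the right-hand side. Using $h(1)=a_0+a_1$ together with the identity $\frac{N-d}{d}=(N-1)\beta^2$, I would compute
$$N^2\Big(\frac{a_1}{d}+a_0\Big)-Nh(1)=(N^2-N)a_0+a_1N(N-1)\beta^2=(N^2-N)\big(a_0+a_1\beta^2\big)=(N^2-N)h(\beta)=(N^2-N)\beta^q,$$
which is precisely the claimed bound. The point is that the tangency condition $h(\beta)=\beta^q$ is exactly what forces the Delsarte--Yudin output to equal the simplex energy.

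For sharpness I would trace back through the proofs of Corollary \ref{cor:yudin-moments} and Theorem \ref{thm:moments} and record when each inequality is tight. The \emph{if} direction is immediate: for the unit-length representatives of a tight simplex (an ETF by Lemma \ref{lem:tight-etf}) every off-diagonal $|\langle y_i,y_j\rangle|$ equals $\beta$, so the energy is $(N^2-N)\beta^q$. For the \emph{only if} direction, equality in the off-diagonal majorization $\sum_{i\ne j}|\langle y_i,y_j\rangle|^q\le\sum_{i\ne j}|y_i|^q|y_j|^q h(|\langle s_i,s_j\rangle|)$ forces $|\langle s_i,s_j\rangle|=\beta$ for all $i\ne j$, because for $q<2$ the function $u^{q/2}$ is \emph{strictly} concave and its tangent touches only at $u=\beta^2$. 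Moreover, since $a_0=(1-\tfrac q2)\beta^q>0$ for $q<2$, equality in the Jensen step $\sum_i|y_i|^q\le N$ forces all norms $|y_i|=1$ (using $\sum_i|y_i|^2=N$). Once the frame is unit-norm, the remaining Cauchy--Schwarz and Jensen steps bounding the $a_1$-term are automatically equalities by the tight-frame identity $\sum_j|\langle y_i,y_j\rangle|^2=\frac Nd$, and the diagonal estimate $\sum_i|y_i|^{2q}\ge N$ is tight as well. Thus equality holds precisely for a unit-norm tight frame all of whose pairwise inner products equal $\beta$, i.e.\ an ETF, which by Lemma \ref{lem:tight-etf} is a tight simplex.

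The main obstacle is the equality analysis rather than the inequality itself: one must check that each of the several inequalities feeding into Corollary \ref{cor:yudin-moments} (the pointwise majorization, two applications of Jensen, and the Cauchy--Schwarz step) is simultaneously tight, and in particular recognize that both the strict concavity of $u^{q/2}$ and the strict positivity $a_0>0$ require $q<2$. This explains why the characterization is stated only for $q\in[1,2)$: at $q=2$ one has $a_0=0$ and $u^{q/2}=u$ is linear, so every unit-norm tight frame attains the bound and the ETF characterization breaks down.
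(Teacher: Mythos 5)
Your proposal is correct and follows essentially the same route as the paper: the same tangent-line majorant $h(t)=\frac{2-q}{2}\beta^q+\frac{q}{2}\beta^{q-2}t^2$ (the paper's $a_0,a_1$ exactly), fed into Corollary \ref{cor:yudin-moments}, with sharpness for $q<2$ traced back through the equality conditions just as the paper does. Your algebraic shortcut $N^2\bigl(\frac{a_1}{d}+a_0\bigr)-Nh(1)=(N^2-N)h(\beta)$ via $\frac{N-d}{d}=(N-1)\beta^2$ is a tidier way to organize the paper's direct computation, and your equality analysis is if anything more explicit than the paper's.
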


\begin{proof}
In order to satisfy the Hermite interpolation conditions, we take $a_1=\frac q 2 \left(\frac {N-d} {d(N-1)}\right)^{\frac {q-2} 2}$ and $a_0=\frac {2-q} 2 \left(\frac {N-d} {d(N-1)}\right)^{\frac q 2}$. It is easy to check that $t^q\leq h(t) = a_1 t^2 +a_0$: taking $s=t^2$, we see that our interpolant is the tangent line to the concave graph of $s^{\frac q 2}$ so it's strictly above it. By Corollary \ref{cor:yudin-moments},

$$\sum\limits_{i\neq j} |\langle y_1, y_j \rangle|^q \leq N^2 (\frac {a_1} d + a_0) - N (a_1 + a_0) =$$

$$=\left(\frac {N-d} {d(N-1)}\right)^{\frac {q-2} 2} \left(N^2 \frac q {2d}  + N^2 \frac {2-q} 2 \frac {N-d} {d(N-1)} - N \frac q 2 - N \frac {2-q} 2 \frac {N-d} {d(N-1)} \right)=$$

$$= \left(\frac {N-d} {d(N-1)}\right)^{\frac {q-2} 2} (N^2-N)\left(\frac {N-d} {d(N-1)}\right)=(N^2-N) \left(\frac {N-d} {d(N-1)}\right)^{\frac q 2}.$$

The inequality is sharp only if all vectors in the frame are of unit length and, for any $i\neq j$, $|\langle y_i, y_j \rangle|^q=h(|\langle y_i, y_j \rangle|)$ which is true only for $|\langle y_i, y_j \rangle|=\left(\frac {N-d} {d(N-1)}\right)^{\frac 1 2}$ if $q<2$.
\end{proof}

For the second application of this approach, we show that maximal simplices (maximal ETFs) are in fact optimal for the $q$-th moments, $q\in [1,2]$, among all isotropic measures. We will need a technical lemma about Hermite interpolants for this result. A similar lemma was used by Yudin for his energy bound \cite{yud92}.

\begin{lem}\label{lem:hermite}
If $H(x)$ is the Hermite interpolant of $f(x)=x^r$, $r\in(0,1]$ satisfying conditions $H(x_i)-f(x_i)=H'(x_i)-f'(x_i)=H(1)-f(1)=0$ for all $x_i$ from the set of given nodes $\{x_1,\ldots,x_M\}\subset(0,1)$, then $H(x)\geq f(x)$ for all $x\in[0,1]$.
\end{lem}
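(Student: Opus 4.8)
The plan is to invoke the classical remainder formula for Hermite interpolation together with a sign analysis of the high-order derivatives of $x^r$. First I would record the node structure: the conditions $H(x_i)=f(x_i)$ and $H'(x_i)=f'(x_i)$ make each $x_i$ a double node, while $H(1)=f(1)$ makes $1$ a simple node, for a total of $2M+1$ interpolation conditions. Hence $H$ is the unique polynomial of degree at most $2M$ determined by these data, and the associated node polynomial is $W(x)=(x-1)\prod_{i=1}^M (x-x_i)^2$.

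Next, for $r=1$ the function $f(x)=x$ is itself the interpolant, so $H=f$ and the claim is immediate; I would dispose of this case first and assume $r\in(0,1)$ for the rest. Then I would apply the Hermite remainder formula: for each $x$ in an interval on which $f$ is $C^{2M+1}$ and which contains $x$ together with all nodes, there is a point $\xi$ in that interval with
$$ f(x)-H(x)=\frac{f^{(2M+1)}(\xi)}{(2M+1)!}\,W(x). $$
The two sign computations then finish the argument. On one hand, $W(x)=(x-1)\prod_i (x-x_i)^2\le 0$ for every $x\in[0,1]$, since the product of squares is nonnegative and $x-1\le 0$. On the other hand, $f^{(2M+1)}(x)=\bigl[\prod_{j=0}^{2M}(r-j)\bigr]x^{\,r-2M-1}$, and for $r\in(0,1)$ the bracketed product has the single positive factor $r$ and the $2M$ negative factors $r-1,\dots,r-2M$, so its sign is $(-1)^{2M}=+1$; combined with $x^{\,r-2M-1}>0$ this gives $f^{(2M+1)}(\xi)>0$ whenever $\xi>0$. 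Therefore $f(x)-H(x)\le 0$, i.e. $H(x)\ge f(x)$.

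The one genuine obstacle is that $x^r$ is not $C^{2M+1}$ at the origin, since every derivative of order $\ge 1$ blows up as $x\to 0^+$, so the remainder formula cannot be applied on an interval containing $0$. I would circumvent this by working only on $(0,1]$: as all nodes lie in $(0,1]$, for any $x\in(0,1]$ the smallest closed interval containing $x$ and the nodes has the form $[\alpha,1]$ with $\alpha>0$, where $f$ is smooth, so the formula applies and yields $\xi\in[\alpha,1]\subset(0,1]$ with $\xi>0$ as required. This establishes $H(x)\ge f(x)$ on $(0,1]$; the remaining point $x=0$ then follows by continuity, since $H(0)=\lim_{x\to 0^+}H(x)\ge \lim_{x\to 0^+}f(x)=0=f(0)$.
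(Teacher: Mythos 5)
Your proof is correct and is essentially the approach the paper takes: the paper's argument is precisely an inline derivation, via Rolle's theorem applied to $g(t)=f(t)-H(t)-\frac{f(x)-H(x)}{W(x)}W(t)$, of the Hermite remainder formula $f(x)-H(x)=\frac{f^{(2M+1)}(\xi)}{(2M+1)!}W(x)$ that you quote, followed by the same sign analysis of $W$ and of $f^{(2M+1)}$, and the paper even remarks immediately after its proof that one can alternatively invoke the remainder formula directly. Your treatment is in fact slightly more scrupulous than the paper's on two minor points -- disposing of $r=1$ separately (where $f^{(2M+1)}\equiv 0$ rather than strictly positive) and confining the argument to $(0,1]$ to avoid the non-smoothness of $x^r$ at the origin, extending to $x=0$ by continuity -- neither of which affects the validity of the paper's version, since its Rolle argument only ever evaluates derivatives at points of $(0,1)$.
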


\begin{proof}
Take $W(t)=(t-x_1)^2\ldots (t-x_M)^2(t-1)$. For a given $x\in[0,1]$ such that $W(x)\neq 0$ we can define

$$g(t) = f(t) - H(t) - \frac {f(x)-H(x)} {W(x)} W(t).$$

Then $g(x)=g(x_1)=\ldots=g(x_M)=g(1)=0$ so, by Rolle's theorem, there are $M+1$ points distinct from the nodes of interpolation where $g'$ vanishes. Adding to them $x_1, \ldots, x_M$, where $g'$ vanishes too we get that $g'(t)$ has at least $2M+1$ distinct roots on $[0,1]$. Hence, by Rolle's theorem, there is $\xi\in(0,1)$ such that $g^{(2M+1)}(\xi)=0$. The degree of $H$ is $2M$ so $H^{(2M+1)}(t)=0$ for all $t$. The degree of $W$ is $2M+1$ so $H^{(2M+1)}(t)=(2M+1)!$. Therefore,

$$0=g^{(2M+1)}(\xi) = f^{(2M+1)}(\xi) - \frac {f(x)-H(x)} {W(x)} (2M+1)!;$$

$$f(x)-H(x)=\frac 1 {(2M+1)!} W(x)  f^{(2M+1)}(\xi),$$
which is never positive because $W(x)\leq 0$ and all odd derivatives of $f$ are non-negative.
\end{proof}

Alternatively, one can use the remainder formula for Hermite interpolants to explain Lemma \ref{lem:hermite}.

\begin{theorem}\label{thm:q-measures}
Given $q\in[1,2]$, for any isotropic measure $\mu$ in $K^d$,
$$\mathbb{E}_{x,y\sim\mu} |\langle x, y \rangle|^q \leq \beta^q + \frac {1-\beta^q} {M},$$ where $\beta=\sqrt{\frac {1} {d+2(dim_{\mathbb{R}} K)^{-1}}}$, $M=d+ \frac {d^2-d} 2 dim_{\mathbb{R}} K$. For $q\in[1,2)$, the equality is possible only if there exists a maximal simplex of size $M$ and $\mu$ is then the uniform distribution over a maximal simplex.
\end{theorem}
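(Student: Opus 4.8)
The plan is to make Corollary \ref{cor:moments} sharp at a maximal simplex by feeding it one carefully chosen polynomial. Write $s=t^2$ and $m=q/2\in[\tfrac12,1]$, and observe first that the target quantity $\beta^q+\frac{1-\beta^q}{M}$ is exactly the $q$-th moment of the uniform distribution over a (possibly hypothetical) maximal simplex: such a configuration has all off-diagonal inner products equal to $\beta$ and all diagonal entries equal to $1$, so its two contact values are $t=\beta$ and $t=1$. Accordingly, I would apply Lemma \ref{lem:hermite} to $f(s)=s^m$ with the single interior node $s=\beta^2$ and the forced node at $s=1$, obtaining the Hermite interpolant $H$ with $H(\beta^2)=\beta^q$, $H'(\beta^2)=m\beta^{q-2}$, $H(1)=1$, and $H(s)\ge s^m$ on $[0,1]$. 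Setting $h(t)=H(t^2)$ then gives $t^q\le h(t)$ for all $t\in[0,1]$, which is precisely the inequality required by Corollary \ref{cor:moments}.

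Since $H$ has degree two in $s$, only the terms $1,\,t^2,\,Q_{K,d}^{(2)}$ appear when $h$ is written in the zonal basis, so $h(t)=a_0+a_1t^2+a_2Q_{K,d}^{(2)}(t)$ with no higher coefficients, and to invoke Corollary \ref{cor:moments} I must verify $a_0,a_1\ge 0$ and $a_2\le 0$. The sign of $a_2$ is the soft part: because $s^m$ is concave for $m\le 1$, its tangent line at $s=\beta^2$ lies on or above the graph, in particular at $s=1$; since $H$ equals that tangent line plus its leading coefficient times $(s-\beta^2)^2$, evaluating at $s=1$ forces the leading coefficient of $H$ to be nonpositive, and as $Q_{K,d}^{(2)}$ has positive leading coefficient in $s$ this yields $a_2\le 0$. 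The inequalities $a_0,a_1\ge 0$ do not follow from such reasoning, and I expect their verification to be the main obstacle: it requires the explicit coefficients of $Q_{K,d}^{(2)}$ (a rescaled degree-two Jacobi polynomial) and reduces to a concrete inequality in $q,d,\dim_{\mathbb{R}}K$ that must be checked on the whole range $q\in[1,2]$ — the kind of computation one would isolate in an appendix.

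To evaluate the resulting bound $\frac{a_1}{d}+a_0$ without solving the linear system by hand, I would introduce the two-point functional $\Lambda(g)=\frac{M-1}{M}g(\beta)+\frac1M g(1)$ and show that $\Lambda(g)$ agrees with the map $g\mapsto a_0+a_1/d$ on all of $\mathrm{span}\{1,t^2,Q_{K,d}^{(2)}\}$ by testing on the basis. This reduces to three scalar identities: $\Lambda(1)=1$ is trivial, $\Lambda(t^2)=\frac1d$ is exactly the relation $\beta^2=\frac{M-d}{d(M-1)}$ coming from Lemma \ref{lem:tight-simplex}, and $\Lambda(Q_{K,d}^{(2)})=0$ is the identity $(M-1)Q_{K,d}^{(2)}(\beta)+Q_{K,d}^{(2)}(1)=0$ — the algebraic statement that a maximal simplex is a projective $2$-design — which is again a direct computation with the explicit $Q_{K,d}^{(2)}$. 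Granting these, and using $h(\beta)=H(\beta^2)=\beta^q$ and $h(1)=H(1)=1$, I get $\frac{a_1}{d}+a_0=\Lambda(h)=\frac{M-1}{M}\beta^q+\frac1M=\beta^q+\frac{1-\beta^q}{M}$, so Corollary \ref{cor:moments} delivers the asserted upper bound.

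For the equality case with $q\in[1,2)$, I would re-run the estimates of Theorem \ref{thm:moments} directly for a general isotropic $\mu$ (replacing the sums by integrals in the Jensen, Cauchy--Schwarz, and positive-semidefiniteness steps) and record when each is tight. Tightness of Jensen forces $|x|=1$ for $\mu$-almost every $x$, so $\mu$ lives on the unit sphere; tightness against $h$ forces $|\langle x,y\rangle|^q=h(|\langle x,y\rangle|)$ almost surely, and since the interpolation remainder is strictly positive away from the two contact points (strict exactly because $q<2$ makes $m$ non-integral), almost every pair satisfies $|\langle x,y\rangle|\in\{\beta,1\}$. Thus distinct lines in the support are pairwise at inner product $\beta$, i.e. a regular simplex, which by Lemma \ref{lem:max-simplex} has at most $M$ points; finally, expressing the moment of a weighted $n$-point regular simplex as $\beta^q+(1-\beta^q)\sum_i\mu_i^2$ and equating it to the attained value gives $\sum_i\mu_i^2=\frac1M$, which with $n\le M$ forces $n=M$ and uniform weights. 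Hence equality requires a maximal simplex of size $M$ to exist, with $\mu$ its uniform distribution.
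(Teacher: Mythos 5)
Your plan is, in outline, the paper's own proof: the same Hermite interpolant to $t^q$ at the nodes $\beta$ and $1$ inside $\mathrm{span}\{1,t^2,Q_{K,d}^{(2)}\}$, justified by Lemma \ref{lem:hermite}, fed into Corollary \ref{cor:moments}, followed by the same equality analysis. Three of your local arguments are genuinely cleaner than the paper's. First, your tangent-line argument for $a_2\le 0$: the paper instead solves the $3\times 3$ system explicitly and proves $(2-q)\beta^q+q\beta^{q-2}\ge 2$ by a monotonicity computation, whereas your observation that concavity of $s^{q/2}$ forces the leading $s$-coefficient of $H$ to be nonpositive gives the same sign at a glance. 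Second, your evaluation of the bound via the functional $\Lambda(g)=\frac{M-1}{M}g(\beta)+\frac1M g(1)$ is correct and avoids the paper's multi-line simplification of $a_0+\frac{a_1}d$; the key identity $(M-1)Q_{K,d}^{(2)}(\beta)+Q_{K,d}^{(2)}(1)=0$ that you defer does hold — with the paper's normalization one has $Q_{K,d}^{(2)}(\beta)=-\beta^2$ and $Q_{K,d}^{(2)}(1)=\frac{(d-1)\dim_{\mathbb{R}}K}{2}=(M-1)\beta^2$, as a short computation with $\beta^{-2}=d+2(\dim_{\mathbb{R}}K)^{-1}$ confirms. Third, in the equality case your identity $\mathbb{E}=\beta^q+(1-\beta^q)\sum_i\mu_i^2$ combined with $\sum_i\mu_i^2\ge \frac1n$ and $n\le M$ (Lemma \ref{lem:max-simplex}) is tidier than the paper's appeal to isotropy to force uniform weights and tightness.

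The genuine gap is the one you flag yourself: you never verify $a_0\ge 0$ and $a_1\ge 0$, and without these Corollary \ref{cor:moments} cannot be invoked at all — this sign verification is precisely where the paper spends the bulk of its proof, so the proposal as written omits the main analytic content of the theorem. Fortunately it is much easier than you predict and needs no appendix: solving the Hermite system with the paper's normalization gives $a_1=\bigl(2\frac{1-\beta^q}{1-\beta^2}+(\frac{\dim_{\mathbb{R}}K}{2\beta^2}-1)\frac{q\beta^{q-2}}2\bigr)\big/\bigl(\frac{\dim_{\mathbb{R}}K}{2\beta^2}+1\bigr)$, which is nonnegative at sight since $\frac{\dim_{\mathbb{R}}K}{2\beta^2}-1=\frac{d\,\dim_{\mathbb{R}}K}{2}>0$; and $a_0\ge0$ reduces to $(1-\beta^2)\beta^{q-2}\bigl(\frac{\dim_{\mathbb{R}}K}{2\beta^2}(1-\frac q2)+1\bigr)\ge 1-\beta^q$, which follows because the bracketed factor is at least $1$ and $(1-\beta^2)\beta^{q-2}=\beta^{q-2}-\beta^q\ge 1-\beta^q$ since $\beta^{q-2}\ge1$. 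One further small point: your equality-case step ``tightness of Jensen forces $|x|=1$'' silently requires $a_0>0$ strictly (otherwise the Jensen term does not enter the bound with positive weight); this is supplied by the same computation, since for $q<2$ the term $\frac{\dim_{\mathbb{R}}K}{2\beta^2}(1-\frac q2)$ is strictly positive, but it should be said.
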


\begin{proof}
We will find the Hermite interpolant of $t^q$ such that $H(\beta)=\beta^q$, $H'(\beta)=q\beta^{q-1}$, $H(1)=1$, and $H$ is a linear combination of $1$, $t^2$, and $Q_{K,d}^{(2)}(t)$. By Lemma \ref{lem:hermite}, $H(t)\geq t^q$ on $[0,1]$ so in order to use Corollary \ref{cor:moments}, we will just need to check that its coefficients have required signs.

It will be convenient for further calculations to choose the following normalization of $Q_{K,d}^{(2)}(t)$:

$$Q_{K,d}^{(2)}(t) = \left(\frac {dim_{\mathbb{R}} K} {2\beta^2} - \frac {dim_{\mathbb{R}} K} 2 -1\right) + \frac {dim_{\mathbb{R}} K} {\beta^2} (t^2-1) + \frac {\frac {dim_{\mathbb{R}} K} {2\beta^2}+1} {1-\beta^2} (t^2-1)^2.$$
Then $Q_{K,d}^{(2)}(t) = \frac {dim_{\mathbb{R}} K} {2\beta^2} - \frac {dim_{\mathbb{R}} K} 2 -1$, $Q_{K,d}^{(2)}(\beta)=-\beta^2$, and $\dfrac {d\, Q_{K,d}^{(2)}(t)} {d\, t} (\beta)=-4\beta$. The coefficients $a_0$, $a_1$, $a_2$ of the interpolant must satisfy the system of equations

\begin{equation}\label{eqn:hermite}
\begin{cases}
a_0 + a_1 + a_2 \left(\frac {dim_{\mathbb{R}} K} {2\beta^2} - \frac {dim_{\mathbb{R}} K} 2 -1\right) = 1\\
a_0 + a_1 \beta^2 + a_2 (-\beta^2) = \beta^q\\
2a_1 \beta + a_2 (-4\beta) = q\beta^{q-1}
\end{cases}
\end{equation}

From (\ref{eqn:hermite}) we find the coefficients:

$$a_2= \dfrac {\frac {1-\beta^q} {1-\beta^2} - \frac {q\beta^{q-2}} {2}} {\frac {dim_{\mathbb{R}} K} {2\beta^2}+1};$$

$$a_1 = \dfrac {2 \frac {1-\beta^q} {1-\beta^2} + (\frac {dim_{\mathbb{R}} K} {2\beta^2}-1)\frac {q\beta^{q-2}} {2}} {\frac {dim_{\mathbb{R}} K} {2\beta^2}+1};$$

$$a_0 = \beta^q - \beta^2 \dfrac {\frac {1-\beta^q} {1-\beta^2} + \frac {dim_{\mathbb{R}} K} {2\beta^2} \frac {q\beta^{q-2}} {2}} {\frac {dim_{\mathbb{R}} K} {2\beta^2}+1}.$$

On order to show $a_2\leq 0$, we need to check that $0\geq \frac {1-\beta^q} {1-\beta^2} - \frac {q\beta^{q-2}} {2} = \frac {2 - (2-q)\beta^q - q\beta^{q-2}} {2(1-\beta^2)}$. For this we introduce the function $g_q(\beta)=(2-q)\beta^q + q\beta^{q-2}$ and we want to prove that $g_q(\beta)\geq 2$ for all $\beta\in(0,1]$. Note that $g_q(1)=2$ so it is sufficient to prove that $g'_q(\beta)\leq 0$ for all $\beta\in(0,1]$. This is true because

$$g'_q(\beta) = (2-q)q(\beta^{q-1}-\beta^{q-3}) = (2-q)q \beta^{q-3}(\beta^2-1)\leq 0.$$

For the next coefficient we get $a_1\geq 0$ because $\frac {dim_{\mathbb{R}} K} {2\beta^2}-1 = d \frac {dim_{\mathbb{R}} K} {2}.$

Finally, for $a_0\geq 0$ we need to show that

$$\beta^{q-2} \left(\frac {dim_{\mathbb{R}} K} {2\beta^2}+1\right) \geq \frac {1-\beta^q} {1-\beta^2} + \frac {dim_{\mathbb{R}} K} {2\beta^2} \frac {q\beta^{q-2}} {2},$$
which is equivalent to

$$(1-\beta^2)\beta^{q-2} \left(\frac {dim_{\mathbb{R}} K} {2\beta^2} (1-\frac q 2) +1\right)\geq 1-\beta^q.$$
This is true because $\frac {dim_{\mathbb{R}} K} {2\beta^2} (1-\frac q 2)$ is positive and $(1-\beta^2)\beta^{q-2}\geq 1-\beta^q$.

Now we can apply Corollary \ref{cor:moments}:

$$\mathbb{E}_{x,y\sim\mu} |\langle x, y \rangle|^q \leq a_0 + \frac {a_1} d =$$

$$= \beta^q - \dfrac {\beta^2\frac {1-\beta^q} {1-\beta^2} + \frac {dim_{\mathbb{R}} K} {2} \frac {q\beta^{q-2}} {2}} {\frac {dim_{\mathbb{R}} K} {2\beta^2}+1} + \frac 1 d \dfrac {2 \frac {1-\beta^q} {1-\beta^2} + d \frac {dim_{\mathbb{R}} K} {2}\frac {q\beta^{q-2}} {2}} {\frac {dim_{\mathbb{R}} K} {2\beta^2}+1} =$$

$$= \beta^q + \dfrac {(\frac 2 d - \beta^2) \frac {1-\beta^q} {1-\beta^2}} {d \frac {dim_{\mathbb{R}} K} {2}+2} = \beta^q + (1-\beta^q)\dfrac {(\frac 2 d - \frac 1 {d+ 2(dim_{\mathbb{R}} K)^{-1}})} {(d \frac {dim_{\mathbb{R}} K} {2}+2)(1-\frac 1 {d+ 2(dim_{\mathbb{R}} K)^{-1}})}=$$

$$=\beta^q + (1-\beta^q)\dfrac {1} {\frac {d^2 - d} 2 dim_{\mathbb{R}} K +d} = \beta^q + (1-\beta^q) \frac 1 M.$$

Theorem \ref{thm:moments} implies the equality is possible only if the support of the measure belongs to a unit sphere. For $q\in [1,2)$, the only common points between $t^q$ and its interpolant are $\beta$ and 1 so, in order to have the equality, the measure must be a distribution over the simplex with scalar products $\beta e$, where $e\in K^{\times}$. Then the measure is isotropic only if it is uniformly distributed over the vertices of this simplex so this simplex must be tight. From tightness it immediately follows that this is a maximal simplex in $K\mathbb{P}^{d-1}$.

\end{proof}

This theorem gives the proof to Conjecture 29 from \cite{buk18} and its complex and quaternionic analogues.

Although we do not cover the case of octonionic codes in this paper, all the machinery may be used for them as well. For instance, tight simplices from the Cayley plane maximize the $q$-energy for tight frames when $q\in[1,2)$, just like in Theorem \ref{thm:ETF_opt}, and the uniform distribution over the maximal simplex in $\mathbb{OP}^2$ constructed in \cite{coh16} is the unique isotropic measure maximizing the $q$-th moment for $q\in [1,2)$, just like in Theorem \ref{thm:q-measures}.

Using the polynomial from Theorem \ref{thm:q-measures} in Corollary \ref{cor:yudin-moments} immediately gives an upper bound for the $q$-energy in the case of a fixed number of points.

\begin{corollary}\label{cor:q-measures}
Given $q\in [1,2]$, for a tight frame $\{y_1,\ldots,y_N\}$ in $K^d$ with the frame constant $\frac N d$,
$$\sum\limits_{i\neq j} |\langle y_i, y_j \rangle|^q \leq N^2 \left( \beta^q + \frac {1-\beta^q} {M} \right) - N,$$ where $\beta=\sqrt{\frac {1} {d+2(dim_{\mathbb{R}} K)^{-1}}}$, $M=d+ \frac {d^2-d} 2 dim_{\mathbb{R}} K$. For $q\in [1,2)$, the inequality is sharp if and only if the frame consists of unit-length representatives of a maximal simplex in $K\mathbb{P}^{d-1}$.
\end{corollary}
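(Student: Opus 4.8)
The plan is to reuse, verbatim, the Hermite interpolant $h(t) = a_0 + a_1 t^2 + a_2 Q_{K,d}^{(2)}(t)$ constructed in the proof of Theorem \ref{thm:q-measures}. All of its required properties were already verified there: $a_0,a_1 \ge 0$, $a_2 \le 0$, and $t^q \le h(t)$ on $[0,1]$. Only two numbers from that proof are needed. First, the interpolation condition $H(1)=1$ gives $h(1)=1$. Second, the displayed evaluation there shows $\frac{a_1}{d}+a_0 = \beta^q + \frac{1-\beta^q}{M}$. Feeding $h$ into Corollary \ref{cor:yudin-moments} then yields $\sum_{i\ne j}|\langle y_i,y_j\rangle|^q \le N^2\bigl(\frac{a_1}{d}+a_0\bigr) - N\,h(1) = N^2\bigl(\beta^q + \frac{1-\beta^q}{M}\bigr) - N$, which is precisely the claimed inequality. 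Equivalently, one can apply Theorem \ref{thm:q-measures} to the uniform measure $\frac1N\sum_i \delta_{y_i}$, which is isotropic since the frame has constant $\frac Nd$, and subtract the diagonal contribution using $\sum_i |y_i|^{2q}\ge N$.

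For the sharpness claim with $q\in[1,2)$ I would argue both directions through the same reduction. The forward direction is a one-line computation: for a maximal simplex $N=M$, all $|y_i|=1$ and all off-diagonal $|\langle y_i,y_j\rangle|=\beta$, so the left-hand side equals $(N^2-N)\beta^q$, and substituting $N=M$ into the right-hand side gives $(M^2-M)\beta^q$ as well. For the converse, suppose equality holds. Writing $\sum_{i,j}|\langle y_i,y_j\rangle|^q = \sum_{i\ne j}|\langle y_i,y_j\rangle|^q + \sum_i |y_i|^{2q}$ and combining the assumed equality with the two inequalities $\sum_i |y_i|^{2q}\ge N$ and (from Theorem \ref{thm:q-measures} for the uniform frame measure) $\frac1{N^2}\sum_{i,j}|\langle y_i,y_j\rangle|^q \le \beta^q+\frac{1-\beta^q}{M}$ forces both of the latter to be equalities. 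Hence the frame is unit-norm and its uniform measure attains equality in Theorem \ref{thm:q-measures}; by the equality analysis there (valid for $q<2$), that measure is the uniform distribution over a maximal simplex, so the frame is made of unit-length representatives of a maximal simplex.

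The main obstacle is the bookkeeping in this converse, specifically reconciling the pointwise contact condition with the maximal-simplex characterization. By Lemma \ref{lem:hermite} the only contact points of $t^q$ with $h$ for $q<2$ are $\beta$ and $1$, and the value $1$ corresponds to coincident lines; since $h(1)=1$, equality is in fact attained by any frame that, as a multiset of lines, equally covers a maximal simplex. I would therefore either state the characterization at the level of the associated isotropic measure, where Theorem \ref{thm:q-measures} already gives the clean ``uniform over a maximal simplex'' answer, or restrict to $N=M$ to match the equiangular reading; the remaining verification that such a frame is genuinely a maximal simplex is then immediate from Lemmas \ref{lem:tight-etf} and \ref{lem:tight-simplex}.
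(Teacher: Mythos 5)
Your argument is correct and is essentially the paper's own: the paper obtains this corollary precisely by plugging the interpolant constructed in Theorem \ref{thm:q-measures} into Corollary \ref{cor:yudin-moments} (using $h(1)=1$ and $a_0+\frac{a_1}{d}=\beta^q+\frac{1-\beta^q}{M}$), with the sharpness characterization inherited from the equality analysis there. Your multiplicity worry in the converse is legitimate but self-resolving: since the matrices $\Pi(x_k)$ of a maximal simplex form a basis of $\mathcal{H}(K^d)$ (Lemma \ref{lem:max-simplex}), the tight-frame condition $\sum_i \Pi(y_i)=\frac{N}{d}I_d$ forces each simplex line to be covered by exactly $N/M$ frame vectors, so the statement ``the frame consists of unit-length representatives of a maximal simplex'' is a correct if-and-only-if as written, with $M\mid N$ in the case $N>M$, and no restriction to $N=M$ is needed.
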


We can compare the bounds from Theorem \ref{thm:yudin} and Corollary \ref{cor:q-measures} to determine which one is better for a particular value of $N$.

\begin{restatable}{lem}{comp}\label{lem:comp}
Given $q\in[1,2)$, for $N<M$,
$$N^2 \left( \beta^q + \frac {1-\beta^q} {M} \right) - N > (N^2-N) \left(\frac {N-d} {d(N-1)}\right)^{\frac q 2};$$
for $N>M$,
$$N^2 \left( \beta^q + \frac {1-\beta^q} {M} \right) - N < (N^2-N) \left(\frac {N-d} {d(N-1)}\right)^{\frac q 2}.$$
The bounds are equal when $q=2$ or when $N=M$ and $q\in[1,2)$.
\end{restatable}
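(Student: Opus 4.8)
The plan is to collapse both inequalities into a single monotonicity statement. Write $c = \beta^q + \frac{1-\beta^q}{M}$ for the universal bound of Theorem \ref{thm:q-measures}, so that the first quantity is $B_1(N) = N^2 c - N$, and abbreviate the ETF bound of Theorem \ref{thm:ETF_opt} as $B_2(N) = N(N-1)\left(\frac{N-d}{d(N-1)}\right)^{q/2}$. Introducing
$$\Phi(N) = \frac{1}{N} + \frac{N-1}{N}\left(\frac{N-d}{d(N-1)}\right)^{q/2},$$
which is exactly $\mathbb{E}_{x,y\sim\mu}|\langle x,y\rangle|^q$ for the uniform measure $\mu$ on an $N$-point ETF, one checks directly that $N^2\Phi(N) = N + B_2(N)$, whence
$$B_1(N) - B_2(N) = N^2\bigl(c - \Phi(N)\bigr).$$
Since $\frac{M-d}{d(M-1)} = \beta^2$, we also get $\Phi(M) = \frac{1+(M-1)\beta^q}{M} = c$. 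Thus the whole lemma follows once I show that $\Phi$ is \emph{strictly increasing} on $(d,\infty)$ when $q \in [1,2)$: then $\Phi(N) < c$ precisely for $N < M$ and $\Phi(N) > c$ precisely for $N > M$, and multiplying by $N^2 > 0$ gives the two displayed inequalities. The equal cases are the boundary of this analysis: at $N=M$ both sides are $\Phi(M)=c$, and at $q=2$ the function $\Phi$ will turn out to be constant.

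To prove the monotonicity I pass to the variable $s = \frac{N-d}{d(N-1)}$, the common squared inner product of an ETF, which is an increasing bijection from $(d,\infty)$ onto $(0,1/d)$ with inverse $N = \frac{d(1-s)}{1-ds}$. A direct computation gives $\frac1N = \frac{1-ds}{d(1-s)}$ and $\frac{N-1}{N} = \frac{d-1}{d(1-s)}$, so that
$$\Phi = \tilde\Phi(s) := \frac{(1-ds) + (d-1)\,s^{q/2}}{d(1-s)}.$$
As $s \mapsto N$ is increasing, it suffices to establish $\tilde\Phi'(s) > 0$ on $(0,1/d)$. Writing $\tilde\Phi = u/(d(1-s))$ with $u(s)=1-ds+(d-1)s^{q/2}$ and differentiating the quotient, the sign of $\tilde\Phi'(s)$ equals the sign of $u'(s)(1-s)+u(s)$, which a short expansion reduces to
$$(d-1)\left(\tfrac{q}{2}\,s^{q/2-1} + \bigl(1-\tfrac{q}{2}\bigr)s^{q/2} - 1\right).$$
Since $d \geq 2$, it remains only to prove the elementary inequality $w(s) := \frac{q}{2} s^{q/2-1} + (1-\frac{q}{2})s^{q/2} - 1 > 0$ for $s \in (0,1)$.

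This last step is immediate from one-variable calculus: $w(1) = 0$, while $w'(s) = \frac{q}{2}\bigl(1-\frac{q}{2}\bigr)s^{q/2-2}(s-1) < 0$ on $(0,1)$ for $q \in [1,2)$, so $w$ strictly decreases to $w(1)=0$ and is therefore positive on $(0,1) \supset (0,1/d)$. When $q = 2$ one has $w \equiv 0$, so $\tilde\Phi$ and hence $\Phi$ is constant and the two bounds coincide for every $N$. I expect the only genuine labor to be bookkeeping — verifying the identity $B_1 - B_2 = N^2(c - \Phi)$ and carrying the change of variables through cleanly — since the conceptual core, strict monotonicity of $\Phi$, dissolves into the transparent fact that $w$ decreases from $+\infty$ down to $0$.
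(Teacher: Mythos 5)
Your proof is correct, and it takes a genuinely different route from the paper's. The paper (Appendix \ref{app:comp}) fixes $N$ and varies the exponent: it rewrites the claim as $\xi(q)>1$ for $\xi(q)=\frac{M(N-1)}{M-N}(\beta^q-\alpha^q)+\beta^q$ with $\alpha=\sqrt{\frac{N-d}{d(N-1)}}$, observes that $\xi'$ has the form $c_1\alpha^q+c_2\beta^q$ and so has at most one zero on $[1,2]$, and then pins down the sign via $\xi(2)=1$ together with two separate checks, $\xi(1)>1$ (through an auxiliary increasing function $\phi(t)$) and $\xi'(2)<0$ (through an auxiliary decreasing function $\psi(t)$), handling $N>M$ by flipping signs. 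You instead fix $q$ and vary $N$: the identity $B_1(N)-B_2(N)=N^2\bigl(c-\Phi(N)\bigr)$ with $\Phi(M)=c$ is correct (and note that the ETF interpretation of $\Phi$ is only motivation --- the argument is purely algebraic, so existence of an $N$-point ETF is never needed), the change of variables $s=\frac{N-d}{d(N-1)}$ is a valid increasing bijection, and your reduction of $\operatorname{sign}\tilde\Phi'(s)$ to $(d-1)\,w(s)$ with $w(s)=\frac{q}{2}s^{q/2-1}+(1-\frac{q}{2})s^{q/2}-1$ checks out. In fact, after substituting $s=\beta^2$, the inequality $w(s)>0$ is precisely the inequality $g_q(\beta)=(2-q)\beta^q+q\beta^{q-2}\geq 2$ that the paper already verifies in the proof of Theorem \ref{thm:q-measures} when showing $a_2\leq 0$, so your route recycles a computation the paper has elsewhere. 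What your approach buys: a single monotonicity statement disposes of both directions $N<M$ and $N>M$ at once, and the equality case $q=2$ falls out automatically ($w\equiv 0$, so $\Phi\equiv \frac1d$) instead of requiring a separate evaluation; the paper's approach avoids the change of variables but pays with two ad hoc auxiliary functions and a case split. Two small points you should make explicit: you need $d\geq 2$ so that the factor $d-1$ is positive (for $d=1$ the statement is degenerate since $M=d$), and the endpoint $N=d$ (where $s=0$) is covered either by continuity of $\tilde\Phi$ at $s=0$ or directly from $B_2(d)=0<B_1(d)$.
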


The proof of this lemma is rather technical and, therefore, is moved to Appendix \ref{app:comp}.

Recently Magsino, Mixon, and Parshall \cite{mag19} found another proof of the Bukh-Cox packing bound. They use a linear programming approach which is essentially a special case of the main linear programming bound developed in this section.

A somewhat similar approach for minimizing the energy over projective measures is used in a joint work of the author with Bilyk, Matzke, Park, and Vlasiuk \cite{bil19,bil19a}.

\subsection{Properties of potential optimizers}\label{subsect:pot_opt}

In this subsection, we analyze isotropic measures and tight frames that can be expected to provide exact upper bounds in Theorem \ref{thm:moments} and Corollaries \ref{cor:yudin}-\ref{cor:yudin-moments}. We will consider $f(t)=t^q$, $q\in[1,2)$, though our observations work for a larger class of potentials. We will look at the situation when the auxiliary function $h(t)$, used as an upper bound of the potential function in all these results, has a finite expansion into zonal spherical functions and, generically, all coefficients of the expansion are not zeros:

$$t^q\leq h(t)=a_0 + a_1 t^2 + \sum\limits_{k=2}^T a_{k} Q_{K,d}^{(k)} (t) \text{ for all } t\in[0,1]$$
and $a_0, a_1>0$, $a_i<0$ for all $2\leq i\leq T$.

The number of values $t$ where $t^q=h(t)$ is necessarily finite. In fact, it is definitely no greater than $T+1$ since the $(T+1)$-st derivative of $h(s^{\frac 1 2})-s^{\frac q 2}$ is never 0 (here we take $t^2=s$ and use the fact that $h(t)$ is an even polynomial). We conclude that for measures attaining the linear programming upper bound, there are finitely many possible scalar products on their support, i.e. such measures are necessarily discrete.

Let us analyze the sets of scalar products more accurately. First of all, 0 cannot possibly be a scalar product in an optimal set obtained via the linear programming bounds. Otherwise, since $h(t)$ is an even polynomial, its growth rate is $O(t^2)$ in the neighborhood of 0 so it is definitely smaller than $t^q$. For the bounds over measures, $h(1)$ must be equal to 1 because scalar products of each point with itself have a non-zero contribution into the total $q$-th moment.

Assume the optimizing measure has $m$ distinct scalar products excluding 1 and denote them by $\beta_1$, $\ldots$, $\beta_m$. Then $t^q\leq h(t)$ for all $t\in[0,1]$ and $t^q-h(t)=0$ for $t=\beta_i$, $1\leq i\leq m$, or $t=1$. We can also conclude that $(h(t)-t^q)'=0$ for $t=\beta_i$, $1\leq i\leq m$. Overall, we see that $(h(t)-t^q)'=0$ has at least $2m$ zeros on $[0,1]$: $m$ of them are all $\beta_i$ and at least one from each of the intervals formed by $\beta_i$ and 1. As mentioned above, the $(T+1)$-st derivative has no zeros so $T\geq 2m$.

Since $a_0>0$, the proof of Theorem \ref{thm:moments} implies that all vectors in $supp(\mu)$ must be unit. If we denote these unit vectors by $\{s_1,\ldots,s_N\}$ then, due to our assumption that $a_i<0$ for $2\leq i\leq T$ and following the proof of Theorem \ref{thm:moments}, $\sum\limits_{i,j=1}^N Q_{K,d}^{(k)} (|\langle s_i, s_j \rangle|)=0$ for all $k$, $1\leq k\leq T$ (the condition is not weighted due to \cite{tay95}). Sets satisfying this condition are known in the literature as \textit{T-designs} (see \cite{del77} for the introduction to the spherical case). Generally, designs may be thought of as averaging sets for polynomials over corresponding spaces \cite{sey84}. The case when a $T$-design has $m$ distinct scalar products and $T=2m$, is extremal and sets of this kind are called \textit{tight designs}. The classification of tight designs in projective spaces is not complete (see \cite{lyu09}) but in all known cases, except for maximal simplices and diagonals of regular $(4N+2)$-gons, tight designs have 0 scalar products which, as we established, is impossible in our case.

The situation for maximizing the $q$-energy over tight frames of a fixed size is similar. The only difference between the measure case is the value of $h$ at 1. Now we do not require $h(1)=1$ so the number of possible zeros of $(h(t)-t^q)'$ is less by one. This in turn changes the design strength restriction to $T\geq 2m-1$. Sets of this kind are known as \textit{sharp configurations} and are proven to be universally optimal, i.e. minimizing all absolutely monotonic potentials of scalar products \cite{coh07}. Except for tight simplices and diagonals of regular $(4L+2)$-gons, all known sharp configurations have 0 scalar products (see \cite{coh16}).

The cases of maximal and, more generally, tight simplices were already observed in the paper so here we say a few words about the remaining case of regular polygons. Firstly, we already know that the uniform distribution over three diagonals of a regular hexagon is an optimal measure for the $q$-th moments so, whenever $4L+2$ is divisible by 3, the maximum is achieved on some number of copies of this set of three lines. Computational results show that even for $4L+2$ not divisible by 3, linear programming bounds cannot show the optimality of any other regular polygon as one may expect at least for small values of $L$.

Having all this checked, it is important to understand that the initial assumptions of genericness are quite strict. It may happen that for some specific configuration and some specific $q$ there is a linear programming bound with some of $a_i$ equal to 0. It seems unlikely, however, for a configuration like this to maximize the $q$-energy for all $q\in[1,2]$ as it happens in the case of tight simplices.

Of course here we discuss only the optimality that could be shown by linear programming bounds. Optimal tight frames still can have 0 as their scalar products and many of the usual suspects (e.g. projective sets formed by shortest vectors of $E_8$ or the Leech lattices) may be optimal as tight frames as well. However, we can show that they cannot be optimal as measures and the results similar to Theorem \ref{thm:q-measures} are not possible for them.

\begin{restatable}{theorem}{orth}\label{thm:orth}
For any $q\in[1,2)$, a discrete isotropic measure in $K^d$ with two orthogonal vectors cannot be a local maximum for the $q$-th moment over the set of all isotropic measures. 
\end{restatable}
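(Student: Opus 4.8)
The plan is to show that $\mu$ is not a local maximum by exhibiting an explicit one-parameter family of isotropic measures $\mu_\epsilon$, converging weak-$*$ to $\mu$ as $\epsilon\to 0^+$, whose $q$-th moment strictly exceeds that of $\mu$. The mechanism is the singular behaviour of $t\mapsto t^q$ at $t=0$: for $q\in[1,2)$ this function has a corner (or infinite curvature), so nudging an orthogonal pair slightly off orthogonality increases its contribution by an amount of order $\epsilon^q$, while every isotropy-restoring adjustment costs only $O(\epsilon^2)$. Since $\epsilon^q$ dominates $\epsilon^2$ as $\epsilon\to 0^+$ precisely when $q<2$, the net change is positive; at $q=2$ this degenerates, consistently with the second moment being constant over all isotropic measures.

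Let $u,v\in\mathrm{supp}(\mu)$ be orthogonal vectors with positive masses $w_1,w_2$; we may take them to be unit (otherwise an analogous unnormalized tilt works, or one first pushes $\mathrm{supp}(\mu)$ onto the sphere as in the proof of Theorem \ref{thm:moments}). I would replace the atom at $u$ by two atoms of mass $w_1/2$ at the tilted unit vectors $u_\pm=(u\pm\epsilon v)/\sqrt{1+\epsilon^2}$, and the atom at $v$ by two atoms of mass $w_2/2$ at $v_\pm=(v\pm\delta u)/\sqrt{1+\delta^2}$, leaving all other atoms fixed. A direct computation gives $\tfrac12\big(\Pi(u_+)+\Pi(u_-)\big)=\frac{uu^*+\epsilon^2 vv^*}{1+\epsilon^2}$, and symmetrically for the $v$-cluster, so the odd-order cross terms $uv^*+vu^*$ cancel identically. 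Choosing $\delta$ by $\frac{\delta^2}{1+\delta^2}=\frac{w_1}{w_2}\,\frac{\epsilon^2}{1+\epsilon^2}$ (so $\delta\sim\sqrt{w_1/w_2}\,\epsilon$) makes the coefficients of $uu^*$ and $vv^*$ in $\sum_i w_i\Pi(s_i)$ return to their original values, so the sum is unchanged and $\mu_\epsilon$ is again isotropic.

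It then remains to compare $q$-th moments. The only new off-diagonal pairs that fail to be orthogonal at first order are $\{u_+,v_+\}$ and $\{u_-,v_-\}$, with $|\langle u_+,v_+\rangle|=\frac{\epsilon+\delta}{\sqrt{(1+\epsilon^2)(1+\delta^2)}}\sim(\epsilon+\delta)$; these contribute a gain of order $(\epsilon+\delta)^q=\Theta(\epsilon^q)$ with a strictly positive constant (positive since $w_1,w_2>0$). Everything else is benign: the within-cluster products $|\langle u_+,u_-\rangle|=\frac{1-\epsilon^2}{1+\epsilon^2}$ change the self-energy by only $O(\epsilon^2)$, the pairs $\{u_+,v_-\}$ and $\{u_-,v_+\}$ remain exactly orthogonal, and the interaction of a tilted cluster with any fixed external atom $s_j$ changes by $O(\epsilon^2)$ when the relevant inner product is nonzero and by a further nonnegative $O(\epsilon^q)$ term when it vanishes, because the $\pm$-symmetric splitting kills all odd-order contributions. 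Summing, the $q$-th moment of $\mu_\epsilon$ exceeds that of $\mu$ by $C\epsilon^q+O(\epsilon^2)$ with $C>0$, which is strictly positive for all sufficiently small $\epsilon>0$.

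The main obstacle is guaranteeing that the perturbation preserves isotropy \emph{exactly}, not merely to first order: an $O(\epsilon)$ isotropy defect would generically demand an $O(\epsilon)$ correction and could overwhelm the gain in the boundary case $q=1$. The symmetric split-and-tilt is engineered so that this defect vanishes identically, which is the crux of the argument. The remaining work—verifying the $O(\epsilon^2)$ bounds on the self- and external-energy changes via the evenness in $\epsilon$, and confirming the reduction to unit vectors—is routine. Since $\mu_\epsilon\to\mu$ weak-$*$ as $\epsilon\to0^+$ while its moment strictly increases for every small $\epsilon>0$, the measure $\mu$ cannot be a local maximum.
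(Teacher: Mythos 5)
Your proposal is correct and reaches the paper's conclusion by the same underlying mechanism (a gain of order $\epsilon^q$ from de-orthogonalizing the pair versus only $O(\epsilon^2)$ isotropy-restoring costs, which wins precisely when $q<2$), but via a genuinely different perturbation. The paper replaces the single atom at $u$ by a $d$-point cluster $\alpha u+\delta v_i$ built on a regular simplex $\{v_1,\dots,v_d\}$ in $u^{\perp}$ with $v_1=u_1/|u_1|$, using $\sum_i v_i=0$ to kill first-order terms, and must rescale \emph{all} remaining atoms by $\beta=1+O(\delta^2)$ to restore isotropy exactly. Your symmetric split-and-tilt of \emph{both} atoms of the orthogonal pair is cleaner in two respects: the $\pm$-averaging identity $\tfrac12\bigl(\Pi(u_+)+\Pi(u_-)\bigr)=\frac{uu^*+\epsilon^2vv^*}{1+\epsilon^2}$ kills odd-order terms without any simplex, and the single matching condition $\frac{\delta^2}{1+\delta^2}=\frac{w_1}{w_2}\frac{\epsilon^2}{1+\epsilon^2}$ restores isotropy \emph{exactly} while leaving every other atom untouched (note the conditions for the $uu^*$ and $vv^*$ coefficients coincide, which is why one equation suffices). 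Your external-atom estimate also matches the paper's: evenness in $\pm\epsilon$ gives $O(\epsilon^2)$ when $\langle s,u\rangle\neq 0$, and a nonnegative $O(\epsilon^q)$ gain when it vanishes, exactly as in the paper's case analysis; and your observation that everything commutes correctly over $\mathbb{H}$ because $\epsilon,\delta$ are real scalars is the same care the paper takes.

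Two minor inaccuracies, neither fatal. First, your claim that $\{u_+,v_-\}$ and $\{u_-,v_+\}$ ``remain exactly orthogonal'' is false unless $w_1=w_2$: one computes $\langle u_+,v_-\rangle=\frac{\epsilon-\delta}{\sqrt{(1+\epsilon^2)(1+\delta^2)}}$, and $\delta\sim\sqrt{w_1/w_2}\,\epsilon\neq\epsilon$ in general. The error is in the favorable direction — these pairs contribute a further nonnegative $\Theta(|\epsilon-\delta|^q)$, so the total gain is still at least $w_1w_2(\epsilon+\delta)^q+O(\epsilon^2)$ — but the statement should be corrected. Second, the reduction to unit vectors is glossed: renormalizing support points of an isotropic measure to unit length does not preserve isotropy, so ``pushing $\mathrm{supp}(\mu)$ onto the sphere'' is not literally available (nor is it what the proof of Theorem \ref{thm:moments} does). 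The honest fix is your other suggestion: run the unnormalized tilt with $u_\pm=c(u\pm\epsilon\hat v)$, $v_\pm=c'(v\pm\delta\hat u)$ and solve the two coefficient-matching equations for $c',\delta$ (or $c,c'$) given small $\epsilon$; this is routine and is how the paper handles arbitrary lengths, via its explicit $\alpha,\beta=1+O(\delta^2)$. With those two repairs your argument is complete and, in my view, somewhat more economical than the paper's.
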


The idea of the proof is to perturb vectors in an orthogonal pair. For measures, it is possible to implement an $O(\delta)$-perturbation of one of these vectors and $O(\delta^2)$-perturbations of all other vectors while keeping the measure isotropic. If the problem is to optimize for discrete isotropic measures with a fixed number of points, this kind of perturbation is not possible. It is quite probable that tight designs or, more generally, sharp sets are optimizers for this setup as well. We also conjecture that a similar perturbation technique can work for non-discrete optimizers and the support of the optimal measure cannot have two orthogonal vectors in the general case either.

The complete proof of Theorem \ref{thm:orth} is quite technical and thus is shown in Appendix \ref{app:orth}.

\subsection{Computational results}\label{subsect:comp}

In this subsection we discuss computational results that can be achieved by using Corollaries \ref{cor:moments} and \ref{cor:yudin-moments}. When fixing the degree $T$ of a polynomial $h(t)$, the conditions become linear with respect to the coefficients $a_0$, $a_1$, $\ldots$, $a_T$: $a_0\geq 0$, $a_1\geq 0$, $a_2\leq 0$, $\ldots$, $a_T\leq 0$, and $a_0+a_1 t+\ldots + a_T Q_{K,d}^{(k)}(t)-t^q\geq 0$ for all $t\in [0,1]$. The function to be optimized is linear with respect to the coefficients: $a_0+\frac {a_1} d$ in the case of Corollary \ref{cor:moments} and $N^2 \left(a_0+\frac {a_1} d \right) - N(a_0+a_1+\ldots+a_T)$ in the case of Corollary \ref{cor:yudin-moments}. Hence, generally speaking, this is a linear program with infinitely many linear constraints.

The situation changes drastically when the inequalities may be interpreted as polynomial. The non-negativity of a polynomial can be transformed to a sum-of-squares condition. Indeed, by the Markov-Luk\'{a}cs theorem \cite{mar06,luk18}, any polynomial $p(t)$ which is non-negative on $[0,1]$ can be represented as $f^2(t)+(t-t^2)g^2(t)$, where $f(t)$ and $g(t)$ are two real polynomials with degrees no greater than $T'$ and $T'-1$, respectively. Then a new polynomial

$$q(t)=(t^2+1)^{2T'}p\left(\frac {t^2} {t^2+1}\right) = \left((t^2+1)^{T'}f\left(\frac {t^2} {t^2+1}\right)\right)^2+\left(t(t^2+1)^{T'-1}g\left(\frac {t^2} {t^2+1}\right)\right)^2$$
is clearly represented as a sum of two squares. A polynomial $q(t)$ of degree $2M'$ is a sum of squares if and only if there exists a positive semidefinite matrix $Q$ such that $q(t)=XQX^t$, where $X=(1,t,\ldots,t^{M'})$ (see the general result of Nesterov \cite{nes98}) and thus there are positive semidefinite constraints on the coefficients of $h(t)$. 

We used this approach for the first moment of isotropic measures and tight frames. As an outcome of the observations above, a polynomial inequality $h(t)-t\geq 0$ can be transformed into an semidefinite programming (SDP) problem on coefficients of the polynomial. This SDP problem can be solved by computer. We used the SOSTOOLS toolbox for Matlab \cite{pra02} with the SeDuMi SDP-solver \cite{stu99} to get computational results. For Corollary \ref{cor:moments}, we used polynomials of degree 14 and checked $\mathbb{R}^d$ for $d\leq 25$, $\mathbb{C}^d$ for $d\leq 20$, and $\mathbb{H}^d$ for $d\leq 15$. All upper bounds for isotropic measures obtained this way coincided with the bound from Theorem \ref{thm:q-measures}. The optimizing polynomials $h(t)$ were precisely those from the proof of the theorem with one notable exception. The maximal simplex in $\mathbb{CP}^1$ has an absolute value of the scalar product of $\frac 1 {\sqrt{3}}$. If $\{s_1, s_2, s_3, s_4\}\subset\mathbb{C}^2$ are unit representatives of its elements, it is easy to check that $\sum\limits_{i,j=1}^4 Q_{\mathbb{C},2}^{(5)} (|\langle s_i, s_j \rangle|)=0$ (it is a so-called projective $\{5,2,1\}$-design). This means the polynomial $Q_{\mathbb{C},2}^{(5)}$ of degree 10 may also appear in the interpolation and this is exactly what was observed when solving the program computationally.

Using a very similar approach based on Corollary \ref{cor:yudin-moments}, we solved the problem of maximizing the 1-energy over tight frames with a fixed number of points for small values of parameters. We used polynomials of degree 14 and checked $\mathbb{R}^d$ for $d\leq 15$, $\mathbb{C}^d$ for $d\leq 10$, and $\mathbb{H}^d$ for $d\leq 6$. For each of these spaces, we computed the upper bounds for the 1-energy for $N$ points where $N$ varied from $d+1$ to $d+\frac {d^2-d} 2 dim_{\mathbb{R}} K + 5$. In all of these cases, except for the case of 5 points in $\mathbb{R}^3$, the upper bound was the lower of the bounds given by Theorem \ref{thm:yudin} and Corollary \ref{cor:q-measures}, thereby confirming Lemma \ref{lem:comp}. In all the observed cases, apart from the codes in $\mathbb{C}^2$ and the exception mentioned above, optimizing polynomials were precisely those used in Theorem \ref{thm:yudin} and Theorem \ref{thm:q-measures}: quadratic Hermite interpolants from Theorem \ref{thm:yudin} when $N<d+\frac {d^2-d} 2 dim_{\mathbb{R}} K$, Hermite interpolants of degree 4 from Theorem \ref{thm:q-measures} when $N>d+\frac {d^2-d} 2 dim_{\mathbb{R}} K$, and a non-negative linear combination of these two when $N=d+\frac {d^2-d} 2 dim_{\mathbb{R}} K$ as both polynomials provide the same result under this condition. Just as for the measure case, optimizing polynomials for tight frames in $\mathbb{C}^2$ had non-zero coefficients for $Q_{\mathbb{C},2}^{(5)}$.

The exceptional case is for tight frames of 5 points in $\mathbb{R}^3$. The optimizing polynomial obtained computationally is $0.148245 + 1.377915 t^2 - 0.117231 Q_{\mathbb{R},3}^{(4)}(t)$. The upper bound for the 1-energy found via this polynomial is approximately $8.144098$ which is better than the bound one can find using Theorem \ref{thm:yudin}, $\frac {20} {\sqrt{6}}\approx 8.164966$. We have no explanation for this exception and do not know how close the actual maximal 1-energy is to this numerical bound.

Finding bounds for values of $q\in(1,2)$ is more complicated because we cannot assume that $h(t)-t^q$ is a polynomial. This can be overcome for rational $q=\frac m n$ by considering $h(t^n)-t^m$. If $m$ and $n$ are large, the corresponding SDP problem becomes more complicated to solve. For our computations, we considered $q=\frac 3 2$ and polynomials of degree 10. Even for polynomials of degree 14 or 12, the SDP solver does not provide results converging to reasonable bounds. With this setup, we found upper bounds for the $\frac 3 2$-moment of isotropic measures in the same spaces as for the first moment: $\mathbb{R}^d$ for $d\leq 15$, $\mathbb{C}^d$ for $d\leq 10$, and $\mathbb{H}^d$ for $d\leq 6$. Just like in the first moment case, computational results agree with the bounds from Theorem \ref{thm:q-measures}. Optimizing polynomials coincide with those used in the proof of Theorem \ref{thm:q-measures} with the one exception of $\mathbb{C}^2$, where polynomials of degree 10 show up as well.

We also calculated upper bounds for the $\frac 3 2$-energy among tight frames of a given size. We used the same spaces and sizes of tight frames as in the computations for the $1$-energy. The results are, generally, very similar. In each case, except for 5 points in $\mathbb{R}^3$, the bound agrees with the bounds given by Theorem \ref{thm:yudin} and Corollary \ref{cor:q-measures} simultaneously confirming Lemma \ref{lem:comp}. Optimizing polynomials are again, with the exception of $\mathbb{C}^2$ and 5 points in $\mathbb{R}^3$, those used in the proofs of  Theorem \ref{thm:yudin} and Corollary \ref{cor:q-measures} and their linear combinations for the borderline case $N=d+\frac {d^2-d} 2 dim_{\mathbb{R}} K$. In $\mathbb{C}^2$, polynomials of degree 10 show up. The minimizing polynomial for 5 points in $\mathbb{R}^3$ is of degree 8 but it is different from the one for the 1-energy so we expect the maximizing tight frame to be different for different values of $q$.

\subsection{Upper bound for the $\infty$-moment}

All the exact upper bounds found so far are attained exclusively on tight frames with unit vectors. This is not generally the case. In the last part of the section, we show an example of the exact upper bound where some of the vectors of the optimizing set are not always unit. Here we prove an upper bound for $\max|\langle x,y \rangle|$, i.e. the $\infty$-moment, of an isotropic measure with a finite support ($d$-dimensional tight frame with $N$ points and the frame constant $\frac N d$).

\begin{theorem}\label{thm:inf}
For a tight frame $\{y_1,\ldots,y_N\}$ in $K^d$ with the frame constant $\frac N d$,

$$\max\limits_{i\neq j}|\langle y_i,y_j \rangle|\leq \frac N {2d}.$$
\end{theorem}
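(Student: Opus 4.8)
The plan is to bound the extremal inner product directly from the tight-frame identity rather than through a linear programming argument, since the $\infty$-moment is a maximum and not a sum. Suppose the maximum is attained on the pair $(y_1,y_2)$ and set $\alpha=|\langle y_1,y_2\rangle|$. First I would apply the defining identity of a tight frame with constant $\frac{N}{d}$, namely $\sum_{i=1}^N|\langle x,y_i\rangle|^2=\frac{N}{d}|x|^2$, to the particular vector $x=y_1$. Since $\langle y_1,y_1\rangle=|y_1|^2$, separating the $i=1$ term gives
\[
|y_1|^4+\sum_{i\geq 2}|\langle y_1,y_i\rangle|^2=\frac{N}{d}|y_1|^2 .
\]
Dropping every term on the left except the one indexed by the partner $y_2$ yields $\alpha^2\leq |y_1|^2\bigl(\tfrac{N}{d}-|y_1|^2\bigr)$, and by symmetry (taking $x=y_2$) the same bound holds with $|y_2|^2$ in place of $|y_1|^2$.

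The second step is a one-variable optimization. The quadratic $g(t)=t\bigl(\tfrac{N}{d}-t\bigr)$ is maximized at $t=\tfrac{N}{2d}$ with value $\tfrac{N^2}{4d^2}$, so regardless of the value of $|y_1|^2$ we obtain $\alpha^2\leq \tfrac{N^2}{4d^2}$, that is $\alpha\leq \tfrac{N}{2d}$, which is the claimed bound. The identity above already forces $|y_1|^2\leq \tfrac{N}{d}$ (the discarded terms are nonnegative), so the relevant value $t=|y_1|^2$ lies in an interval containing the optimizer and the estimate is never vacuous.

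There is no genuinely hard step here; the point worth flagging is the contrast with the preceding $q$-moment results. Equality does not force unit vectors: the bound is saturated precisely when $|y_1|^2=|y_2|^2=\tfrac{N}{2d}$ and all inner products $\langle y_1,y_i\rangle$, $\langle y_2,y_i\rangle$ with $i\notin\{1,2\}$ vanish, so the extremizer generically has non-unit vectors. This is exactly the phenomenon the subsection advertises. Finally, one should note the argument is uniform across $K=\mathbb{R},\mathbb{C},\mathbb{H}$, as it uses only the squared modulus $|\langle\cdot,\cdot\rangle|^2$ and the tight-frame trace identity $\sum_i y_iy_i^*=\tfrac{N}{d}I_d$, both of which are insensitive to the choice of division algebra.
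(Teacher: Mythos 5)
Your proof is correct and is essentially the paper's own argument: the paper likewise applies the tight-frame identity at $x=y_i$, isolates the diagonal term, and bounds $\frac{N}{d}|y_i|^2-|y_i|^4$ by $\frac{N^2}{4d^2}$ (via completing the square, which is the same computation as your quadratic optimization). Your equality analysis also matches the remark the paper makes immediately after its proof.
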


\begin{proof}
We use the tight frame condition for $y_i$:

$$\frac N d |y_i|^2 = |y_i|^4 + \sum\limits_{j\neq i}|\langle y_i, y_j\rangle|^2.$$

From this we conclude that for any $j$, $j\neq i$,

$$|\langle y_i, y_j\rangle|^2\leq \frac N d |y_i|^2 - |y_i|^4 =\frac {N^2} {4d^2} - \left(\frac N {2d} - |y_i|^2\right)^2 \leq \frac {N^2} {4d^2}.$$

Hence $|\langle y_i, y_j\rangle| \leq \frac N {2d}$ for any pair $i\neq j$.
\end{proof}

The equality in Theorem \ref{thm:inf} is possible only if $|y_i|^2=|y_j|^2=|\langle y_i, y_j\rangle|=\frac N {2d}$, i.e. $y_i$ and $y_j$ are representatives of the same point in $K\mathbb{P}^{d-1}$ and, unless $N=2d$, are of the same non-unit length. The remaining vectors must be orthogonal to $y_i$ and $y_j$ and form a tight frame with the same frame constant $\frac N d$ in the orthogonal subspace. It is also clear that any tight frame with this structure will be a maximizer.

\section{Optimizing in the dual space}\label{sect:dual}

In this section we extend the approach of Bukh and Cox and prove energy bounds for projective codes using the upper bounds for moments of isotropic measures. Generally, all these bounds will work for a larger set of problems. In particular, we consider square matrices from $K^{N\times N}$ with all diagonal elements equal to 1 and of rank $\leq d$. This set contains the set of Gram matrices defined by $N$ unit vectors in $K^d$, although we do not require matrices to be Hermitian and positive semidefinite which would be necessary for Gram matrices.

In this section we obtain new lower bounds for $\sum\limits_{i\neq j} |A_{ij}|^p$, where $A$ is an arbitrary matrix under the constraints described above. In case $A$ is a Gram matrix of a set of unit vectors in $K^d$, this sum is known as a $p$-frame energy (see \cite{ehl12, gla19, che19}). Since the value of this sum is equal for all unit-length representatives of a projective code in $K\mathbb{P}^{d-1}$, it is natural to consider the minimization problem for sets of vectors as a minimization problem for projective codes. For even natural $p$, the lower bounds for the $p$-frame energy that are precise for large enough projective codes follow from \cite{wel74} in the complex case and from \cite{sid74, ven01} for the real case. The equality is attained on projective designs mentioned in Subsection \ref{subsect:pot_opt}.

\subsection{Duality of matrices of a given rank and tight frames}

The following theorem was essentially proven by Bukh and Cox in their paper (the proof was given for the case $p=\infty$ and the general case was outlined in the discussion section).

\begin{theorem}\label{thm:p-bound}
For any matrix $A\in K^{N\times N}$ of rank $d$ with $A_{ii}=1$ for all $i$, there exists a tight frame $\{y_1,\ldots, y_N\}\subset K^{N-d}$ with the frame constant $\frac N {N-d}$ such that

$$\left(\sum\limits_{i\neq j} |A_{ij}|^p\right)^{\frac 1 p} \left( \sum\limits_{i\neq j} |\langle y_i, y_j \rangle|^q \right)^{\frac 1 q} \geq N$$

for any pair of $p,q\in [1,+\infty]$ such that $\frac 1 p + \frac 1 q =1$.
\end{theorem}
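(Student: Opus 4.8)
The plan is to build the required tight frame explicitly from the (right) kernel of $A$ and then extract a single scalar identity whose two sides are exactly the quantities that appear in the Hölder product. Since $A$ has rank $d$, the subspace $W=\ker A\subset K^N$ has dimension $N-d$; assume $N>d$ so that the frame constant $\tfrac{N}{N-d}$ is meaningful. Let $P$ be the orthogonal projection onto $W$ and set $B=\tfrac{N}{N-d}P$. Because $\tfrac{N-d}{N}B=P$ is a Hermitian idempotent of rank $N-d$, the matrix $B$ is positive semidefinite with eigenvalues $\tfrac{N}{N-d}$ (multiplicity $N-d$) and $0$ (multiplicity $d$). Writing $P=UU^*$ with $U^*U=I_{N-d}$ and putting $Y=\sqrt{\tfrac{N}{N-d}}\,U^*$, one gets $B=Y^*Y$ and $YY^*=\tfrac{N}{N-d}I_{N-d}$, so the columns $y_1,\dots,y_N$ of $Y$ form a tight frame in $K^{N-d}$ with frame constant $\tfrac{N}{N-d}$, Gram matrix $B$, and $\langle y_i,y_j\rangle=B_{ij}$. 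This is precisely the spectral picture behind \eqref{eqn:spec}, now run in reverse to manufacture a frame from a prescribed projection.

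Next I would exploit that $P$ projects onto $\ker A$, so $AP=0$ and hence $AB=0$. Reading off the $i$-th diagonal entry of the product gives $0=\sum_j A_{ij}B_{ji}=A_{ii}|y_i|^2+\sum_{j\neq i}A_{ij}\langle y_j,y_i\rangle$, and since $A_{ii}=1$ this rearranges to $|y_i|^2=-\sum_{j\neq i}A_{ij}\langle y_j,y_i\rangle$. Summing over $i$ and using $\sum_i|y_i|^2=\operatorname{Tr}B=\tfrac{N}{N-d}\operatorname{Tr}P=N$, I obtain the central identity
\[
N=-\sum_{i\neq j}A_{ij}\,\langle y_j,y_i\rangle .
\]
The theorem then drops out by the triangle inequality followed by Hölder's inequality with conjugate exponents $p,q$:
\[
N\le\sum_{i\neq j}|A_{ij}|\,|\langle y_i,y_j\rangle|\le\Big(\sum_{i\neq j}|A_{ij}|^p\Big)^{1/p}\Big(\sum_{i\neq j}|\langle y_i,y_j\rangle|^q\Big)^{1/q},
\]
where I have used $|\langle y_j,y_i\rangle|=|\langle y_i,y_j\rangle|$.

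The conceptual content lives entirely in the first step: recognizing that the correct complementary object is the tight frame whose Gram matrix is a rescaled orthogonal projection onto $\ker A$. Once that choice is made, the estimate is forced and the remaining inequalities are routine. The parts I expect to require the most care are not the inequalities but the linear-algebra bookkeeping over $\mathbb{H}$: the factorization $B=Y^*Y$, the identity $YY^*=\tfrac{N}{N-d}I_{N-d}$, and the evaluation $\operatorname{Tr}P=N-d$ must all be justified with the noncommutative Hermitian conventions of Section \ref{sect:codes}. It is worth noting that $AB=0$ follows from $AP=0$ by pure associativity of matrix multiplication and so survives noncommutativity. Finally, the endpoint exponents $(p,q)=(1,\infty)$ and $(\infty,1)$ should be dispatched by the corresponding limiting form of Hölder's inequality, and the degenerate case $N=d$ excluded at the outset since it makes the frame constant undefined.
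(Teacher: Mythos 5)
Your proposal is correct and is essentially the paper's own argument: the paper likewise builds the tight frame from a basis of $\ker A$ (normalizing via $M\in GL_{N-d}(K)$ so that the frame constant is $\frac{N}{N-d}$, which yields exactly your Gram matrix $\frac{N}{N-d}P$), extracts the same per-row identity from the orthogonality of the rows of $A$ to the columns of the Gram matrix (your $(AB)_{ii}=0$ is the paper's $\langle A_i, Y_i\rangle=0$), and concludes via the triangle inequality, $\sum_i |y_i|^2=N$, and H\"older with the same limiting treatment of $p\in\{1,\infty\}$. Your packaging through the orthogonal projection $P=UU^*$ is a cleaner way to organize the same construction, and your care over the quaternionic conventions matches what the paper implicitly assumes.
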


\begin{proof}
The kernel of the matrix $A$ has rank $N-d$ so it has the basis of $N$-dimensional vectors $\xi_1, \ldots, \xi_{N-d}$. All vectors of $Ker A$ can be written as $v_1 \xi_1+\ldots + v_{N-d} \xi_{N-d}$, where $v_i$ are arbitrary numbers from $K$. Using the notation $v=(v_1,\ldots,v_{N-d}) \in K^{N-d}$ we get that $Ker\, A = \{ \langle z_1, v\rangle,\ldots, \langle z_N, v \rangle | v\in K^{N-d}\}$ for fixed vectors $z_1,\ldots,z_N\in K^{N-d}$.

The Hermitian form $|\langle z_1, v\rangle |^2 + \ldots +|\langle z_N, v \rangle|^2$ is positive definite because the vectors $z_i$ span $K^{N-d}$. Hence there exists $M\in GL_{N-d}(K)$ such that this form is $\frac N {N-d} \langle Mv, Mv \rangle$. Then

$$|\langle (M^*)^{-1}z_1, v\rangle |^2 + \ldots |\langle (M^*)^{-1}z_N, v \rangle|^2 = |\langle z_1, M^{-1} v\rangle |^2 + \ldots |\langle z_N, M^{-1} v \rangle|^2 = \frac N {N-d} |v|^2.$$
Vectors $y_i=(M^*)^{-1}z_i$, $1\leq i\leq N$, form a tight frame with the frame constant $\frac N {N-d}$. $Ker\, A$ can be still written as $\{ \langle y_1, v\rangle,\ldots, \langle y_N, v \rangle | v\in K^{N-d}\}$.

For the next step, we take $v=y_i$ and denote $Y_i=(\langle y_1, y_i \rangle,\ldots, \langle y_N, y_i\rangle)$. Then for $A_i$, the $i$-th row of $A$, it must hold $\langle A_i, Y_i \rangle = 0$. Therefore,

$$\langle y_i, y_i \rangle = \left|\sum\limits_{j\neq i} A_{ij} \langle y_j, y_i \rangle\right| \leq \sum\limits_{j\neq i} |A_{ij}| |\langle y_j, y_i \rangle|.$$
Summing up such inequalities for all $i$ we get that

$$\sum\limits_{i=1}^N \langle y_i, y_i \rangle \leq  \sum\limits_{i=1}^N \sum\limits_{j\neq i} |A_{ij}| |\langle y_j, y_i \rangle| \leq \left(\sum\limits_{i\neq j} |A_{ij}|^p\right)^{\frac 1 p} \left( \sum\limits_{i\neq j} |\langle y_i, y_j \rangle|^q \right)^{\frac 1 q}$$
for $p,q\in (1,+\infty)$, $\frac 1 p + \frac 1 q =1$, by the H\"{o}lder inequality. Taking into account that $\sum\limits_{i=1}^N \langle y_i, y_i \rangle = N$ for a tight frame with the frame constant $\frac {N} {N-d}$, we get the statement of the theorem. The cases $p=+\infty, q=1$ and $p=1, q=+\infty$ follow by taking corresponding limits.
\end{proof}

Although initially the set of vectors $\{z_1,\ldots, z_N\}$ in the proof of Theorem \ref{thm:p-bound} was defined up to all non-singular linear transformations, the set $\{y_1,\ldots, y_N\}$ is unique up to isometries due to the tight frame condition and the fixed frame coefficient.

If a matrix $A$ is a Gram matrix of a set of unit vectors $X=\{x_1,\ldots,x_N\}\subset K^d$, one way to understand the construction of the tight frame $Y=\{y_1,\ldots,y_N\}$ in Theorem \ref{thm:p-bound} is to interpret $Y$ as an isotropic representative of the Gale transform of the set $X$. Here we give a brief explanation for this interpretation.

The notion of Gale duality goes back to Gale \cite{gal56} who used this construction for analyzing combinatorial properties of polytopes, although similar approaches were known in mathematics for quite some time \cite{cob15}. Here we define Gale duality following the literature on polytopes. For an ordered set of vectors $X=\{x_1,\ldots,x_N\}\subset K^d$ of full rank, the set of linear dependence relations on $X$ are all vectors $v=(v_1,\ldots,v_N)^*\in K^N$ such that $v_1x_1+\ldots+v_Nx_N = 0$. The set of linear dependence relations forms a subspace of dimension $N-d$ in $K^N$. We can choose an arbitrary basis of this subspace and form a matrix $B\in K^{N\times(N-d)}$ with the basis vectors as columns. The ordered set of $N$ columns of $B^*$ is then said to be \textit{Gale dual} to the set $X$. If $C$ is a matrix formed by vectors from $X$ as columns using their initial order, then the necessary and sufficient condition on $B$ is that $CB=0$ and the rank of $B$ is $N-d$. We note that the condition $CB=0$ can be substituted by $C^* C B=0$ which is essentially what was used in the proof of Theorem \ref{thm:p-bound}. Hence the set constructed there is precisely a Gale dual set of the set $X$, when $A=C^* C$, i.e. $A$ is a Gram matrix of the set of unit vectors. Initially, a Gale dual set is not defined uniquely and we will use the term of Gale duality for any pair of Gale dual sets and Gale transform for any set dual to a given one in this non-unique sense for the rest of the paper. In the proof of the theorem, we impose the isotropic condition which implies the uniqueness of the dual set up to isometries and the uniqueness of its Gram matrix $B B^*$. We should also mention that the (metric) Gale duality in \cite{coh16} is a special case of the Gale duality used in Theorem \ref{thm:p-bound}.

\begin{corollary}\label{cor:p-energy}
For any matrix $A\in K^{N\times N}$ of rank $d$ with $A_{ii}=1$ for all $i$, there exists an isotropic measure $\mu$ in $K^{N-d}$ with $|supp(\mu)|=N$ such that

$$\left(\sum\limits_{i\neq j} A^p_{ij}\right)^{\frac 1 p} \geq \frac {N} {(N^2 \mathbb{E}_{x, y\sim\mu}|\langle x, y \rangle|^q-N)^{\frac 1 q}}.$$
for any pair of $p,q\in [1,+\infty]$ such that $\frac 1 p + \frac 1 q =1$.
\end{corollary}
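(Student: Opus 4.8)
The plan is to read the corollary off directly from Theorem~\ref{thm:p-bound} by reinterpreting its tight frame as an isotropic measure and accounting for the diagonal terms. First I would apply Theorem~\ref{thm:p-bound} to the given matrix $A$, producing a tight frame $\{y_1,\ldots,y_N\}\subset K^{N-d}$ with frame constant $\frac{N}{N-d}$ together with
$$\left(\sum_{i\neq j}|A_{ij}|^p\right)^{1/p}\left(\sum_{i\neq j}|\langle y_i,y_j\rangle|^q\right)^{1/q}\geq N.$$
Since the frame constant $\frac{N}{N-d}$ is exactly $N$ divided by the dimension $N-d$ of the ambient space, the uniform distribution $\mu$ placing mass $\frac1N$ on each $y_i$ satisfies $\mathbb{E}_{x\sim\mu}xx^{*}=\frac1N\cdot\frac{N}{N-d}I_{N-d}=\frac1{N-d}I_{N-d}$, so $\mu$ is an isotropic measure in $K^{N-d}$, as noted in Section~\ref{sect:tight}. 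This is the measure the corollary asserts to exist.

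The key step is to convert the off-diagonal frame energy $\sum_{i\neq j}|\langle y_i,y_j\rangle|^q$ into the $q$-th moment of $\mu$. Expanding the moment and splitting off the diagonal,
$$N^2\,\mathbb{E}_{x,y\sim\mu}|\langle x,y\rangle|^q=\sum_{i,j=1}^N|\langle y_i,y_j\rangle|^q=\sum_{i=1}^N|y_i|^{2q}+\sum_{i\neq j}|\langle y_i,y_j\rangle|^q,$$
using $\langle y_i,y_i\rangle=|y_i|^2$. The clean form $N^2\mathbb{E}-N$ in the statement then comes from a convexity estimate on the diagonal. The tight-frame identity gives $\sum_i|y_i|^2=\frac{N}{N-d}(N-d)=N$, so the average of the $|y_i|^2$ equals $1$; since $t\mapsto t^q$ is convex for $q\geq1$, Jensen's inequality yields $\frac1N\sum_i|y_i|^{2q}\geq\left(\frac1N\sum_i|y_i|^2\right)^q=1$, i.e. $\sum_i|y_i|^{2q}\geq N$. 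Hence
$$\sum_{i\neq j}|\langle y_i,y_j\rangle|^q=N^2\,\mathbb{E}_{x,y\sim\mu}|\langle x,y\rangle|^q-\sum_{i=1}^N|y_i|^{2q}\leq N^2\,\mathbb{E}_{x,y\sim\mu}|\langle x,y\rangle|^q-N.$$

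Finally I would chain the two estimates. Raising the last inequality to the power $1/q$ and inverting, a larger off-diagonal energy makes the denominator in Theorem~\ref{thm:p-bound} larger, so the bound
$$\left(\sum_{i\neq j}|A_{ij}|^p\right)^{1/p}\geq\frac{N}{\left(\sum_{i\neq j}|\langle y_i,y_j\rangle|^q\right)^{1/q}}\geq\frac{N}{\left(N^2\,\mathbb{E}_{x,y\sim\mu}|\langle x,y\rangle|^q-N\right)^{1/q}}$$
holds, which is exactly the claim; the cases $p=\infty,\,q=1$ and $p=1,\,q=\infty$ follow by taking limits, as in Theorem~\ref{thm:p-bound}. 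The only genuinely delicate point is the orientation of the diagonal estimate: the frame supplied by Theorem~\ref{thm:p-bound} need not consist of unit vectors, so $\sum_i|y_i|^{2q}$ is not equal to $N$ in general, and one must check that replacing it by its \emph{lower} bound $N$ enlarges the denominator and therefore only weakens, rather than invalidates, the resulting lower bound. This is precisely the direction Jensen provides, so the clean inequality survives.
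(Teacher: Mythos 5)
Your proposal is correct and follows essentially the same route as the paper: take $\mu$ to be the uniform distribution over the Gale-dual tight frame from Theorem~\ref{thm:p-bound} (isotropic since the frame constant is $\frac{N}{N-d}$), bound the diagonal contribution $\sum_i |y_i|^{2q}\geq N$ via Jensen's inequality applied to $\sum_i\langle y_i,y_i\rangle=N$, and chain the resulting estimate $\sum_{i\neq j}|\langle y_i,y_j\rangle|^q\leq N^2\,\mathbb{E}_{x,y\sim\mu}|\langle x,y\rangle|^q-N$ with the H\"{o}lder-type bound of Theorem~\ref{thm:p-bound}. Your explicit check of the orientation of the Jensen estimate (that the frame vectors need not be unit, so the diagonal is only bounded below by $N$, which weakens rather than invalidates the bound) is exactly the point the paper's shorter proof relies on implicitly.
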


\begin{proof}
For $\mu$ we can take the uniform distribution over the tight frame $\{y_1,\ldots,y_N\}$ constructed in the proof of Theorem \ref{thm:p-bound}. Since the frame constant is $\frac {N} {N-d}$, this distribution defines an isotropic measure in $K^{N-d}$. By Jensen's inequality, 
$$\frac 1 N \sum\limits_{i=1}^N \langle y_i,y_i \rangle^q \geq \left(\frac 1 N \sum\limits_{i=1}^N \langle y_i,y_i \rangle \right) = 1.$$
Therefore,

$$N^2 \mathbb{E}_{x, y\sim\mu}|\langle x, y \rangle|^q-N \geq \sum\limits_{i\neq j} |\langle y_i, y_j \rangle|^q$$
and the corollary follows from Theorem \ref{thm:p-bound}.
\end{proof}

\subsection{Lower bounds for the $p$-frame energy}

As a first application of Theorem \ref{thm:p-bound}, we give a new proof of the result from \cite{okt07} (also Proposition 3.1 in \cite{ehl12}). A different proof using the approach of Bukh and Cox was given in \cite{gla19}. 

\begin{proposition}\label{prop:p>2}
For any $p\geq 2$ and any matrix $A\in K^{N\times N}$ of rank $d$ with $A_{ii}=1$ for all $i$,
$$\left(\sum\limits_{i\neq j} A^p_{ij}\right)^{\frac 1 p}\geq (N(N-1))^{\frac 1 p} \left( \frac {N-d} {d(N-1)} \right)^{\frac 1 2}.$$
\end{proposition}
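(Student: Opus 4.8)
The plan is to derive this bound by composing the duality of Theorem \ref{thm:p-bound} with the upper bound on the $q$-energy of tight frames from Theorem \ref{thm:ETF_opt}. The key observation is that since $p\geq 2$, the conjugate exponent $q=\frac{p}{p-1}$ satisfies $\frac 1 p+\frac 1 q=1$ and lies in $(1,2]$, which is exactly the range in which the frame-energy bound of Theorem \ref{thm:ETF_opt} is available.

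First I would apply Theorem \ref{thm:p-bound} to $A$ with this conjugate pair $(p,q)$, obtaining a tight frame $\{y_1,\ldots,y_N\}\subset K^{N-d}$ with frame constant $\frac{N}{N-d}$ for which
$$\left(\sum\limits_{i\neq j}|A_{ij}|^p\right)^{\frac 1 p}\geq \frac{N}{\left(\sum\limits_{i\neq j}|\langle y_i,y_j\rangle|^q\right)^{\frac 1 q}}.$$
We may assume $N>d$, since for $N=d$ the right-hand side of the proposition vanishes and the inequality is trivial. To bound the denominator from above, I would invoke Theorem \ref{thm:ETF_opt}, now applied with ambient dimension $N-d$ in place of $d$, so that the frame constant $\frac{N}{N-d}$ fits the required form $\frac{N}{\dim}$. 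This yields
$$\sum\limits_{i\neq j}|\langle y_i,y_j\rangle|^q\leq (N^2-N)\left(\frac{d}{(N-d)(N-1)}\right)^{\frac q 2}.$$
Substituting this estimate into the reciprocal above and simplifying the exponents, using $\frac 1 q=\frac{p-1}{p}$ and $\frac 1 p-\frac 1 2=\frac{2-p}{2p}$, collapses the expression to exactly $(N(N-1))^{\frac 1 p}\left(\frac{N-d}{d(N-1)}\right)^{\frac 1 2}$, which is the claimed bound.

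The argument is essentially a clean two-step composition, so there is no genuine analytic difficulty; the calculation after the two substitutions is routine. The only point requiring care is the dimension bookkeeping: the dual frame produced by Theorem \ref{thm:p-bound} lives in $K^{N-d}$ with frame constant $\frac{N}{N-d}$, so Theorem \ref{thm:ETF_opt} must be invoked with $d$ replaced by $N-d$, and one must verify that the resulting $\frac{d}{(N-d)(N-1)}$ and the exponent powers of $(N-1)$ reassemble into the stated form. It is also worth noting that this is consistent with the situation for even $p$ discussed before the statement: the estimate is tight precisely when $A$ is dual to a tight projective design, which is the source of the equality cases there.
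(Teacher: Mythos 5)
Your proof is correct and takes essentially the same route as the paper: both invoke Theorem \ref{thm:p-bound} to produce the dual tight frame in $K^{N-d}$ with frame constant $\frac{N}{N-d}$, then bound its $q$-energy for the conjugate exponent $q=\frac{p}{p-1}\in[1,2]$ via Theorem \ref{thm:ETF_opt} applied with ambient dimension $N-d$, and the exponent arithmetic collapses to the stated bound exactly as you describe. The only differences are cosmetic (your explicit $N=d$ check and bookkeeping, which the paper leaves implicit); note only that your closing aside on equality is slightly imprecise, since the paper identifies sharpness for $p>2$ with $A$ being the Gram matrix of unit representatives of a tight simplex in $K\mathbb{P}^{d-1}$, not with duality to a projective design, though this does not affect the proof of the inequality itself.
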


\begin{proof}
By Theorem \ref{thm:p-bound} there exists a tight frame $\{y_1,\ldots,y_N\}\subset K^{N-d}$ with the frame constant $\frac N {N-d}$ such that

$$\left(\sum\limits_{i\neq j} |A_{ij}|^p\right)^{\frac 1 p} \left( \sum\limits_{i\neq j} |\langle y_i, y_j \rangle|^q \right)^{\frac 1 q} \geq N$$
By Theorem \ref{thm:ETF_opt},
$$\sum\limits_{i\neq j} |\langle y_1, y_j \rangle|^q \leq (N^2-N) \left(\frac {d} {(N-d)(N-1)}\right)^{\frac q 2}.$$
Using these two inequalities we get the required lower bound.
\end{proof}

Tracking the inequalities that lead to this bound we can see that, given $p>2$, the bound is sharp if an only if $A$ is a Gram matrix of unit representatives of a tight simplex in $K\mathbb{P}^{d-1}$.

In a similar way, combining Corollary \ref{cor:p-energy} with Theorem \ref{thm:q-measures}, we immediately obtain the universal bound for the $p$-energy of a matrix depending on its size and rank.

\begin{theorem}\label{thm:p-energy}
For any matrix $A\in K^{N\times N}$ of rank $d$ with $A_{ii}=1$ for all $i$, for $p\geq 2$,

$$\left(\sum\limits_{i\neq j} |A_{ij}|^p\right)^{\frac 1 p} \geq \frac {N} {\left(N^2 \left(\beta^{\frac p {p-1}} + \frac 1 M (1-\beta^{\frac p {p-1}})\right)-N\right)^{1-\frac 1 p}},$$

where $\beta=\sqrt{\frac {1} {N-d+2(dim_{\mathbb{R}} K)^{-1}}}$, $M=N-d+ \frac {(N-d)^2-(N-d)} 2 dim_{\mathbb{R}} K$.
\end{theorem}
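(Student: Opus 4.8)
The plan is to directly combine the duality inequality from Theorem \ref{thm:p-bound} with the moment bound from Theorem \ref{thm:q-measures}, exactly as Proposition \ref{prop:p>2} combines Theorem \ref{thm:p-bound} with Theorem \ref{thm:ETF_opt}. First I would invoke Theorem \ref{thm:p-bound}: for the given matrix $A$ of rank $d$ with unit diagonal, there exists a tight frame $\{y_1,\ldots,y_N\}\subset K^{N-d}$ with frame constant $\frac{N}{N-d}$ satisfying
$$\left(\sum_{i\neq j}|A_{ij}|^p\right)^{\frac 1 p}\left(\sum_{i\neq j}|\langle y_i,y_j\rangle|^q\right)^{\frac 1 q}\geq N,$$
where $\frac 1 p+\frac 1 q=1$. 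Since $p\geq 2$, the conjugate exponent is $q=\frac{p}{p-1}\in(1,2]$, which is precisely the range in which the moment bounds apply.

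The key step is to control the second factor. The uniform distribution over the tight frame $\{y_1,\ldots,y_N\}$ is an isotropic measure $\mu$ in $K^{N-d}$, so I would apply Theorem \ref{thm:q-measures} with ambient dimension $N-d$ in place of $d$. This gives
$$\mathbb{E}_{x,y\sim\mu}|\langle x,y\rangle|^q\leq \beta^q+\frac{1-\beta^q}{M},$$
with $\beta=\sqrt{\frac{1}{(N-d)+2(dim_{\mathbb{R}}K)^{-1}}}$ and $M=(N-d)+\frac{(N-d)^2-(N-d)}{2}dim_{\mathbb{R}}K$, matching the parameters in the statement. The cleanest route from the expected moment to $\sum_{i\neq j}|\langle y_i,y_j\rangle|^q$ is to use Corollary \ref{cor:p-energy}, which already packages the Jensen step $\frac 1 N\sum_i\langle y_i,y_i\rangle^q\geq 1$ needed to pass from $N^2\,\mathbb{E}_{x,y\sim\mu}|\langle x,y\rangle|^q-N$ to the off-diagonal sum; indeed Corollary \ref{cor:p-energy} directly asserts
$$\left(\sum_{i\neq j}|A_{ij}|^p\right)^{\frac 1 p}\geq \frac{N}{\left(N^2\,\mathbb{E}_{x,y\sim\mu}|\langle x,y\rangle|^q-N\right)^{\frac 1 q}}.$$

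Finally I would substitute the Theorem \ref{thm:q-measures} bound into this denominator. Writing $q=\frac{p}{p-1}$, so that $\frac 1 q=1-\frac 1 p$ and $\beta^q=\beta^{\frac{p}{p-1}}$, the quantity $N^2\,\mathbb{E}_{x,y\sim\mu}|\langle x,y\rangle|^q-N$ is bounded above by $N^2\bigl(\beta^{\frac{p}{p-1}}+\frac 1 M(1-\beta^{\frac{p}{p-1}})\bigr)-N$. Since the right-hand side of Corollary \ref{cor:p-energy} is decreasing in the denominator, replacing the expected moment by its upper bound only decreases the denominator and hence only strengthens the lower bound, yielding exactly the claimed inequality. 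The routine part is the bookkeeping of exponents ($\beta^q=\beta^{\frac{p}{p-1}}$, $\frac 1 q=1-\frac 1 p$) and checking the parameter substitution $d\mapsto N-d$. There is essentially no analytic obstacle here, since all the heavy lifting — the linear programming argument and the Hermite interpolation — was already done inside Theorem \ref{thm:q-measures}; the main thing to verify carefully is that $q=\frac{p}{p-1}$ indeed lands in $[1,2]$ for every $p\geq 2$ (the endpoint $p=2$ giving $q=2$, where the bound is still valid though no longer characterized by a strict simplex equality case), so that Theorem \ref{thm:q-measures} is legitimately applicable.
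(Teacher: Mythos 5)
Your proposal is correct and coincides with the paper's own proof: the paper obtains Theorem \ref{thm:p-energy} exactly by combining Corollary \ref{cor:p-energy} with Theorem \ref{thm:q-measures} applied in dimension $N-d$ with $q=\frac{p}{p-1}\in(1,2]$, which is precisely your argument. One small wording slip worth noting: replacing $\mathbb{E}_{x,y\sim\mu}|\langle x,y\rangle|^q$ by its upper bound \emph{increases} the denominator $N^2\,\mathbb{E}|\langle x,y\rangle|^q-N$ and so yields a \emph{weaker} (but still valid) lower bound, which is all that is needed; your inequality chain is correct despite the reversed phrasing.
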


Note that this bound may be sharp only if there exists a maximal simplex in dimension $N-d$ and there is an isotropic measure over $N$ points in $K\mathbb{P}^{N-d-1}$ coinciding with the isotropic measure over such a simplex, i.e. $N$ is divisible by the size of the maximal simplex $M$, where $M=N-d+ \frac {(N-d)^2-(N-d)} 2 dim_{\mathbb{R}} K$. In the next section we will show that these conditions are also sufficient and, if they are satisfied, the bound of the theorem is sharp.

Finally, we prove the result dual to Theorem \ref{thm:inf}.

\begin{theorem}
For any matrix $A\in K^{N\times N}$ of rank $d$ with $A_{ii}=1$ for all $i$,

$$\sum\limits_{i\neq j} |A_{ij}| \geq 2(N-d).$$
The bound is sharp if only if $d\leq N\leq 2d$ and all $N$ indices can be partitioned into $N-d$ pairs and $2d-N$ singletons such that $A_{ij}=0$ when $(i,j)$ is not a pair and $A_{ij}=A_{ji}=\pm 1$ when $(i,j)$ is a pair in this partition.
\end{theorem}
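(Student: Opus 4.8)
The plan is to derive the inequality from the duality of Theorem \ref{thm:p-bound} in the extremal case $p=1$, $q=\infty$, combined with the $\infty$-moment bound of Theorem \ref{thm:inf}. Given $A\in K^{N\times N}$ of rank $d$ with unit diagonal (the case $N=d$ being trivial, since then the right-hand side is $0$), Theorem \ref{thm:p-bound} produces a tight frame $\{y_1,\dots,y_N\}\subset K^{N-d}$ with frame constant $\frac{N}{N-d}$ satisfying
$$\left(\sum_{i\neq j}|A_{ij}|\right)\cdot \max_{i\neq j}|\langle y_i,y_j\rangle| \geq N.$$
Theorem \ref{thm:inf}, applied in dimension $N-d$ to this frame, gives $\max_{i\neq j}|\langle y_i,y_j\rangle|\leq \frac{N}{2(N-d)}$. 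Substituting yields $\sum_{i\neq j}|A_{ij}|\geq 2(N-d)$, the claimed bound.

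For the sharpness characterization, the ``if'' direction is a direct computation: a matrix that is block-diagonal (after reindexing) with $N-d$ rank-one blocks $\begin{pmatrix}1&s\\ \bar s&1\end{pmatrix}$, $|s|=1$, and $2d-N$ singleton blocks $(1)$ has rank $d$, unit diagonal, and $\sum_{i\neq j}|A_{ij}|=2(N-d)$; such matrices exist precisely when $d\leq N\leq 2d$. The substance is the ``only if'' direction, for which I would force every inequality used above to be an equality. Equality requires three things at once: (c)~equality in Theorem \ref{thm:inf}, so the off-diagonal maximum of the dual frame equals $\frac{N}{2(N-d)}$; (b)~equality in the H\"older step (with $p=1$, $q=\infty$), so $|\langle y_i,y_j\rangle|=\frac{N}{2(N-d)}$ for every pair with $A_{ij}\neq 0$; and (a)~equality in the triangle-inequality step $\langle y_i,y_i\rangle=\bigl|\sum_j A_{ij}\langle y_j,y_i\rangle\bigr|=\sum_j|A_{ij}||\langle y_j,y_i\rangle|$ for each $i$.

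From the equality analysis of Theorem \ref{thm:inf}, attaining the maximum $\frac{N}{2(N-d)}$ forces $|y_i|^2=|y_j|^2=|\langle y_i,y_j\rangle|=\frac{N}{2(N-d)}$ and $y_i\parallel y_j$ (equality in Cauchy--Schwarz), with all other frame vectors orthogonal to this pair. Feeding this into the tight-frame identity $\frac{N}{N-d}|y_i|^2=|y_i|^4+\sum_{k\neq i}|\langle y_i,y_k\rangle|^2$ shows a parallel class can contain at most two nonzero vectors, so the nonzero $y_i$ split into mutually orthogonal parallel pairs; tightness $\sum_i\Pi(y_i)=\frac{N}{N-d}I_{N-d}$ then forces exactly $N-d$ such pairs, with the remaining $2d-N$ indices having $y_i=0$ (hence $d\leq N\leq 2d$). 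Condition (b) shows every edge $A_{ij}\neq 0$ joins two parallel vectors, so the support of $A$ lies inside the pairs, while indices with $y_i=0$ carry no incident edges and thus have vanishing off-diagonal rows and columns. Finally, condition (a) for a pair $\{i,j\}$ reads $A_{ij}\langle y_j,y_i\rangle=|y_i|^2>0$, which pins $|A_{ij}|=1$ and $A_{ji}=\overline{A_{ij}}$, recovering exactly the claimed block form (in particular $A_{ij}=A_{ji}=\pm1$ when $K=\mathbb{R}$).

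The main obstacle is the combinatorial rigidity in steps (a)--(c): one must simultaneously exploit the geometric equality conditions inherited from Theorem \ref{thm:inf} (parallelism, equal norms, orthogonality of distinct pairs) and the algebraic equality conditions from H\"older and the triangle inequality, then correctly transport the resulting structure of the dual frame back to the entries of $A$. The delicate points are verifying that a parallel class cannot exceed size two and that the singleton directions correspond precisely to zero dual vectors (equivalently, to rows and columns of $A$ supported only on the diagonal); together these pin down the counts $N-d$ and $2d-N$ and the range $d\leq N\leq 2d$.
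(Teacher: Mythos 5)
Your proposal is correct and follows essentially the same route as the paper: the inequality is obtained exactly as in the paper by applying Theorem \ref{thm:p-bound} with $p=1$, $q=\infty$ to the Gale dual tight frame in $K^{N-d}$ together with Theorem \ref{thm:inf}, and the sharpness characterization comes from tracing the same chain of equality conditions (H\"older, the row-wise triangle inequality, and the equality analysis of Theorem \ref{thm:inf}) through the dual frame. The only minor differences are bookkeeping: you pin $|A_{ij}|=1$ and $A_{ji}=\overline{A_{ij}}$ directly from the kernel relation and count the $N-d$ pairs via tightness of the frame, whereas the paper gets the unimodularity from the row equalities and forces $A_{ij}=A_{ji}=\pm 1$ by a rank-minimality argument (zeroing a pair would contradict the bound); note also the harmless sign slip in your step (a), since $\sum_j A_{ij}\langle y_j,y_i\rangle=0$ gives $A_{ij}\langle y_j,y_i\rangle=-|y_i|^2$, and that your conclusion $A_{ji}=\overline{A_{ij}}$ with $|A_{ij}|=1$ is in fact the more precise statement for $K=\mathbb{C},\mathbb{H}$.
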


\begin{proof}
The Gale dual tight frame $\{y_1,\ldots,y_N\}\in K^{N-d}$ constructed via Theorem \ref{thm:p-bound} must satisfy Theorem \ref{thm:inf}. Thus $\max\limits_{i\neq j} |\langle y_i,y_j \rangle|\leq \frac N {2(N-d)}$ and, by Theorem \ref{thm:p-bound},

$$\sum\limits_{i\neq j} |A_{ij}| \frac N {2(N-d)}\geq N.$$
Therefore, $\sum\limits_{i\neq j} |A_{ij}| \geq 2(N-d)$.

The bound is sharp when for each $A_{ij}\neq 0$, the corresponding $|\langle y_i,y_j \rangle|$ is precisely $\frac N {2(N-d)}$. Due to the proof of Theorem \ref{thm:inf}, this can happen only if $|y_i|^2=|y_j|^2=\frac N {2(N-d)}$ and all other vectors are orthogonal to both $y_i$ and $y_j$. This means $A_{ik}=A_{jk}=A_{ki}=A_{kj}=0$ for all $k\neq i,j$. $\sum\limits_{i\neq j} |A_{ij}|$ is minimal possible so changing $A_{ij}$ and $A_{ji}$ to 0 must change the rank of the matrix. This is posssible only if both $A_{ij}$ and $A_{ji}$ are the same and are equal to either 1 or -1. The number of pairs cannot be greater than $d$ so $N\leq 2d$. This concludes the proof for the sharp bounds.
\end{proof}

A slightly more general result was proven in \cite{gla19} without the use of moments of isotropic measures.

\section{Constructing sharp codes}\label{sect:constr}

In this section we show how to construct sharp codes for bounds from Theorems \ref{thm:p-bound} and \ref{thm:p-energy}. Our construction builds upon the construction of Bukh and Cox in \cite{buk18} and the construction of Gale dual projective codes \cite{coh16} or Naimark complements of tight frames \cite{cas13}.

Let $C\in K^{n\times n}$ be an Hermitian matrix with unit elements on the diagonal and a maximal eigenvalue equal to $\lambda$ with multiplicity $k$. We assume $C\neq I_n$ so $\lambda$, as the maximal eigenvalue of $C$, must be greater than 1. For a given natural number $b$, we will look for Gram matrices from $K^{bn\times bn}$ of a set of unit vectors in $K^{bn-k}$ of the following type:

$$C(\alpha,\beta,\gamma)=\alpha I_{nb} + \beta I_n\otimes J_b + \gamma C\otimes J_b,$$
where $\otimes$ is a standard tensor product, $J_b$ is a $b\times b$ matrix of all ones, and $\alpha,\beta,\gamma\in\mathbb{R}$.

Knowing the eigenvalues of $C$, it is clear how to find necessary and sufficient conditions on $\alpha$, $\beta$, and $\gamma$. For instance, for $b=1$, such a matrix will be $\frac {\lambda} {\lambda-1} I_n - \frac 1 {\lambda-1} C$.

\begin{lem}\label{lem:family}
Given $b>1$, for any $\alpha\in[0,\frac b {b-1}]$, $C\left(\alpha,\frac {-b\lambda + (b\lambda-1)\alpha} {b(1-\lambda)},\frac {b+(1-b)\alpha} {b(1-\lambda)}\right)$ is a Gram matrix of the set of unit vectors in $K^{bn-k}$.
\end{lem}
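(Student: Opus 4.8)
The plan is to characterize exactly when $C(\alpha,\beta,\gamma)$ is the Gram matrix of unit vectors in $K^{bn-k}$, namely when it is Hermitian positive semidefinite with unit diagonal and rank at most $bn-k$. Since $C$ is Hermitian and $I_{nb}$, $I_n\otimes J_b$, $C\otimes J_b$ are all Hermitian, any real combination is automatically Hermitian, so two conditions remain: the diagonal must be all ones, and the spectrum must be nonnegative with at least $k$ zero eigenvalues.

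First I would compute the spectrum of $C(\alpha,\beta,\gamma)$ by exploiting the tensor structure. The matrix $J_b$ has eigenvalue $b$ (once, on the all-ones vector $\mathbf{1}_b$) and eigenvalue $0$ (with multiplicity $b-1$). Tensoring with the spectral decomposition of $C$, whose eigenvalues I denote $\lambda=\mu_1=\dots=\mu_k$ (the maximal eigenvalue with multiplicity $k$) and $\mu_{k+1},\dots,\mu_n<\lambda$, the eigenvalues of $C(\alpha,\beta,\gamma)$ split into two groups. On the subspace $(\,\cdot\,)\otimes\mathbf{1}_b$, the matrix acts as $\alpha I_n + b\beta I_n + b\gamma C$, giving eigenvalues $\alpha + b\beta + b\gamma\,\mu_i$ for $1\le i\le n$. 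On the orthogonal complement (where $J_b$ acts as $0$), the matrix acts as $\alpha I$, giving the eigenvalue $\alpha$ with multiplicity $n(b-1)$.

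Next I would impose the three requirements. The unit-diagonal condition fixes one linear relation: since $\operatorname{diag}(I_n\otimes J_b)$ and $\operatorname{diag}(C\otimes J_b)$ both contribute $1$ in each entry (as $C$ has unit diagonal and $J_b$ has unit diagonal), the diagonal of $C(\alpha,\beta,\gamma)$ equals $\alpha+\beta+\gamma$, so I need $\alpha+\beta+\gamma=1$. To force rank $\le bn-k$, I want the $k$-fold eigenvalue corresponding to $\mu_i=\lambda$ in the first group to vanish: $\alpha+b\beta+b\gamma\lambda=0$. Solving these two linear equations for $\beta,\gamma$ in terms of $\alpha$ is the routine computation that yields precisely $\beta=\frac{-b\lambda+(b\lambda-1)\alpha}{b(1-\lambda)}$ and $\gamma=\frac{b+(1-b)\alpha}{b(1-\lambda)}$, matching the statement; I would verify this substitution rather than rederive it in full.

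Finally I would check positive semidefiniteness, which is the main point requiring the hypothesis $\alpha\in[0,\tfrac{b}{b-1}]$. The eigenvalue $\alpha$ (multiplicity $n(b-1)$) is nonnegative exactly when $\alpha\ge 0$. For the first group, after substituting the solved values of $\beta,\gamma$, each eigenvalue becomes $\alpha+b\beta+b\gamma\mu_i = b\gamma(\mu_i-\lambda)$ using the vanishing relation at $\mu_i=\lambda$; since $\mu_i-\lambda\le 0$, nonnegativity reduces to controlling the sign of $\gamma$, and the remaining constraint $\alpha\le\frac{b}{b-1}$ is exactly what guarantees $\gamma\le 0$ (from $\gamma=\frac{b+(1-b)\alpha}{b(1-\lambda)}$ with $1-\lambda<0$). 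The hard part will be organizing these sign analyses cleanly—tracking that $\lambda>1$ makes $1-\lambda<0$ and correctly pinning down the two interval endpoints $\alpha=0$ and $\alpha=\frac{b}{b-1}$ as the thresholds where the two distinct families of eigenvalues ($\alpha$ itself, versus the $\mu_i<\lambda$ group) first go negative. Once all eigenvalues are shown nonnegative with the $\lambda$-eigenvalue killed to multiplicity $k$, the matrix is a genuine Gram matrix of unit vectors spanning a space of dimension $bn-k$, completing the proof.
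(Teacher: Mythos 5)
Your proposal is correct and follows essentially the same route as the paper: both compute the spectrum of $C(\alpha,\beta,\gamma)$ via the tensor structure with $J_b$ (eigenvalues $\alpha+(\beta+\gamma\mu_i)b$ plus $\alpha$ with multiplicity $n(b-1)$), impose the unit-diagonal condition $\alpha+\beta+\gamma=1$ and the vanishing condition $\alpha+(\beta+\gamma\lambda)b=0$, and reduce positive semidefiniteness to $\alpha\geq 0$ and $\gamma\leq 0$, the latter being exactly the constraint $\alpha\leq\frac{b}{b-1}$ since $\lambda>1$. Your rewriting of the first group of eigenvalues as $b\gamma(\mu_i-\lambda)$ is a slightly tidier packaging of the paper's sign argument, but the substance is identical, and the deferred linear-system verification for $\beta,\gamma$ checks out.
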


\begin{proof}
We use the same notation as above, i.e. $\beta=\frac {-b\lambda + (b\lambda-1)\alpha} {b(1-\lambda)}$ and $\gamma=\frac {b+(1-b)\alpha} {b(1-\lambda)}$.

Let $C$ have eigenvalues $\lambda_1<\ldots<\lambda_l<\lambda$ with multiplicities $k_1, \ldots, k_l, k$, respectively. Then the eigenvalues of $\beta I_n + \gamma C$ are $\beta + \gamma \lambda_1$, $\ldots$, $\beta+\gamma \lambda$ with their respective multiplicities. For any matrix, its tensor product with $J_b$ has eigenvalues obtained by multiplying the eigenvalues of the initial matrix by $b$ and multiplicities of these eigenvalues are the same as those of their counterparts in the initial matrix. The remaining eigenvalue is 0 with the suitable multiplicity. Therefore, the matrix $(\beta I_n + \gamma C)\otimes J_b$ has eigenvalues $(\beta + \gamma \lambda_1)b$, $\ldots$, $(\beta + \gamma \lambda)b$ with multiplicities $k_1,\ldots, k$, respectively, and the eigenvalue 0 with the multiplicity $bn-n$. Finally, the matrix $C(\alpha,\beta,\gamma)$ has eigenvalues $\alpha+(\beta + \gamma \lambda_1)b$, $\ldots$,  $\alpha+(\beta + \gamma \lambda)b$ with their respective multiplicities $k_1,\ldots, k$, and $\alpha$ with the multiplicity $bn-n$.

In order for the matrix to be a Gram matrix of the set of unit vectors from $K^{bn-k}$, it must be Hermitian, with non-negative eigenvalues, with the 0 eigenvalue of multiplicity k, and have the diagonal of all ones. For the eigenvalues, it is sufficient to check that $\alpha\geq 0$, $\alpha+(\beta + \gamma \lambda)b = 0$, $\alpha+(\beta + \gamma \lambda_i)b\geq 0$ for all $i$ from 1 to $l$, $\alpha\geq 0$. The first two conditions are clearly satisfied. Since $\gamma\leq 0$, the last condition is satisfied too because $\lambda_i<\lambda$ for all $i$. The remaining condition is $\alpha+\beta+\gamma=1$ for all diagonal elements. This one is clearly satisfied as well.
\end{proof}

We also note that the constructed set is full-dimensional unless $\gamma=0$, in which case the dimension is $nb-n$ and the set is the union of $n$ pairwise orthogonal $(b-1)$-dimensional regular simplices.

The main idea now is to use a Gram matrix $C$ of a tight frame for the construction in Lemma \ref{lem:family}. If the frame is in $K^{d-k}$, the frame constant is then the maximal eigenvalue of the Gram matrix and its multiplicity is $k$ precisely. $b$ copies of such a frame form a tight frame in $K^{d-k}$ with $bN$ vectors. The Gram matrix of this tight frame is $C\otimes J_b$. The whole one-parametric family defined in Lemma \ref{lem:family} belongs to the general Gale transform of the frame. It is easy to confirm by checking that $C(\alpha,\beta,\gamma) (C\otimes J_b) = 0$. Indeed, when using equation (\ref{eqn:spec}), the only condition left to check is $\alpha+\beta b + \gamma b \lambda=0$ which is true for the family from Lemma \ref{lem:family}. If $b$ copies of a tight frame is a maximizer of the $q$-th moment among tight frames of the size $bN$, Theorem \ref{thm:p-bound} tells us there is a good chance for one of the representatives of the family to be an optimal projective code.

Various constructions by Bukh and Cox \cite{buk18} are de facto those projective codes from the families described by Lemma \ref{lem:family} minimizing the maximal non-diagonal element of the Gram matrix. One more already known construction arises when the representative of a family is a tight frame itself. This happens when $C$ is a Gram matrix of a tight frame and either $b=1$ or $b>1$ and $\alpha=\frac {b\lambda} {b\lambda-1}$. Then the new tight frame obtained by this construction is the one that is called Gale dual by Cohn, Kumar, and Minton \cite{coh16} or Naimark complement in the frame literature (see, for instance, \cite{cas13}).

\begin{theorem}\label{thm:constr}
Let $C\in K^{M\times M}$ be a Gram matrix of unit representatives of a tight simplex in $K\mathbb{P}^{k-1}$ such that $|C_{ij}|=\beta$ for all $i\neq j$. Then for any $p\neq 0,1$ (including $p=\infty$), there exists a projective code in $K\mathbb{P}^{bm-k-1}$ such that the Gram matrix $A\in K^{bM\times bM}$ of its unit representatives in $K^{bM-k}$ satisfies

$$\left(\sum\limits_{i\neq j} |A_{ij}|^p\right)^{\frac 1 p} = \frac {bM} {\left((bM)^2 \left(\beta^{\frac p {p-1}} + \frac 1 M (1-\beta^{\frac p {p-1}})\right)-bM\right)^{1-\frac 1 p}}.$$
\end{theorem}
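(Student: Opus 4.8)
The plan is to realize the claimed code as one member of the one–parameter family produced by Lemma \ref{lem:family}, and then to single out the parameter value for which the Hölder and triangle inequalities in the proof of Theorem \ref{thm:p-bound} are both equalities; tightness there reproduces the asserted identity directly. Since $C$ is the Gram matrix of a (maximal, hence tight) simplex, it is the Gram matrix of a unit-norm tight frame in $K^k$, so by (\ref{eqn:spec}) its only eigenvalues are $0$ and $\lambda=M/k$, the latter with multiplicity $k$. Thus $C$ meets the hypotheses of Lemma \ref{lem:family} with $n=M$, and for each $\alpha\in[0,\frac{b}{b-1}]$ we obtain the Gram matrix $A=C(\alpha,\beta_0,\gamma)$ of a unit-vector configuration in $K^{bM-k}$ (a code in $K\mathbb{P}^{bM-k-1}$) of rank $bM-k$, with $\beta_0,\gamma$ the prescribed coefficients. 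Indexing rows by pairs $(i,s)$ with $1\le i\le M$, $1\le s\le b$, one reads off $A_{(i,s),(j,t)}=\alpha\,\delta_{ij}\delta_{st}+\beta_0\,\delta_{ij}+\gamma\,C_{ij}$, so the off-diagonal entries take only two absolute values: $|1-\alpha|$ when $i=j,\ s\ne t$ (using $\alpha+\beta_0+\gamma=1$), and $|\gamma|\beta$ when $i\ne j$. Throughout I write $q=p/(p-1)$.

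First I would identify the Gale dual of this code. Because $A\,(C\otimes J_b)=0$ (the identity $\alpha+b\beta_0+b\lambda\gamma=0$ checked after Lemma \ref{lem:family}), and $C\otimes J_b$ is the Gram matrix of the tight frame consisting of $b$ copies of the maximal simplex — a unit-norm tight frame of $bM$ vectors in $K^k$ with frame constant $\frac{bM}{k}=\frac{N}{N-d}$ — the uniqueness up to isometry of the isotropic Gale dual shows that this $b$-fold simplex is exactly the frame $\{y_i\}$ furnished by Theorem \ref{thm:p-bound}. Its vectors are unit, so $\sum_i\langle y_i,y_i\rangle=bM=N$, and a direct count gives the exact value $\frac{1}{(bM)^2}\sum_{(i,s),(j,t)}|\langle v_i,v_j\rangle|^q=\beta^q+\frac{1-\beta^q}{M}$, valid for every $q$. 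Hence $\sum_{i\ne j}|\langle y_i,y_j\rangle|^q=(bM)^2\big(\beta^q+\frac{1-\beta^q}{M}\big)-bM$ as an identity, independently of $\alpha$. Note that this uses only the explicit moment of the $b$-fold simplex, not the bound of Theorem \ref{thm:q-measures}, so the argument is not confined to the range $q\in[1,2]$.

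It remains to make the two inequalities of Theorem \ref{thm:p-bound} tight, and this is where $\alpha$ is spent. The triangle inequality $\langle y_i,y_i\rangle=|\sum_{j\ne i}A_{ij}\langle y_j,y_i\rangle|\le\sum_{j\ne i}|A_{ij}||\langle y_j,y_i\rangle|$ is an equality exactly when every summand $A_{ij}\langle y_j,y_i\rangle$ is a non-positive real; for distinct simplex indices this summand equals $\gamma\,C_{ij}C_{ji}=\gamma\beta^2\le 0$ since $\gamma\le 0$ on the whole range, while for equal simplex index and distinct copies it equals $1-\alpha$, which is $\le 0$ precisely when $\alpha\ge 1$. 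Thus the admissible window is $\alpha\in[1,\frac{b}{b-1}]$. On it Hölder's inequality for $(|A_{ij}|)$ and $(|\langle y_i,y_j\rangle|)$ is tight iff $|A_{ij}|^p$ is proportional to $|\langle y_i,y_j\rangle|^q$ over all off-diagonal pairs, i.e. iff $(|\gamma|\beta)^p\beta^{-q}=(\alpha-1)^p$. At $\alpha=1$ the left side is positive while the right side vanishes, and at $\alpha=\frac{b}{b-1}$ (where $\gamma=0$) the reverse holds, so the intermediate value theorem yields a root $\alpha^{\ast}\in(1,\frac{b}{b-1})$; for $b=1$ only pairs with distinct simplex indices occur and the proportionality is automatic. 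With $\alpha=\alpha^{\ast}$ the chain $N\le\big(\sum_{i\ne j}|A_{ij}|^p\big)^{1/p}\big(\sum_{i\ne j}|\langle y_i,y_j\rangle|^q\big)^{1/q}$ is an equality, which together with the moment identity above gives exactly the stated formula; the case $p=\infty,\ q=1$ follows in the limit, the proportionality condition reducing to equality of the two off-diagonal magnitudes. For $p\ge 2$ this simultaneously exhibits the code as optimal, since the value attained is the lower bound of Theorem \ref{thm:p-energy}.

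The step I expect to be the main obstacle is the simultaneous realization of both equality conditions from Theorem \ref{thm:p-bound}: phase alignment forces $\alpha\ge 1$, whereas Lemma \ref{lem:family} only guarantees a genuine positive-semidefinite Gram matrix of rank $bM-k$ for $\alpha\le\frac{b}{b-1}$, so one must verify that the Hölder proportionality equation truly has a root in this precise window and that the resulting $A$ remains a valid code. Confirming that $\alpha^{\ast}$ reproduces the exact right-hand side — rather than merely some value attaining the bound — is a routine but delicate computation; I would either evaluate $E(\alpha)=Mb(b-1)(\alpha-1)^p+M(M-1)b^2(|\gamma|\beta)^p$ directly at $\alpha^{\ast}$, which also disposes of the remaining values of $p\ne 0,1$ uniformly, or bypass the algebra by invoking the tightness of the chain established above.
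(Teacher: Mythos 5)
Your construction is the paper's construction: the same one-parameter family from Lemma \ref{lem:family} built from $C$, with the parameter fixed by forcing equality in the two inequalities inside the proof of Theorem \ref{thm:p-bound}, and your proportionality equation $(\alpha-1)^p=(|\gamma|\beta)^p\beta^{-q}$ is literally the equation the paper solves. (The paper writes the root in closed form, $\alpha=1+\bigl(b-1+\beta^{(q-p)/p}b(\lambda-1)\bigr)^{-1}$, rather than invoking the intermediate value theorem; note that after taking $p$-th roots your equation is linear in $\alpha$, so the closed form is immediate and valid for every real $p\neq 0,1$, whereas IVT is cleanest only for $p>0$.) Two of your additions sharpen the paper's exposition: you actually verify the phase-alignment condition ($A_{ij}\langle y_j,y_i\rangle$ equals $\gamma\beta^2\le 0$ for distinct simplex indices and $1-\alpha$ within a block, forcing $\alpha\ge 1$), which the paper only asserts; and you identify the isotropic Gale dual as the $b$-fold simplex with Gram matrix $C\otimes J_b$ and compute its $q$-energy exactly, so that for $p\in(1,\infty]$ the stated identity drops out of tightness of the H\"{o}lder chain with no further algebra. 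One small misreading: the hypothesis is a \emph{tight} simplex, not a maximal one (your parenthetical ``maximal, hence tight'' reverses this, though you only ever use tightness), and accordingly your closing claim that the code attains the bound of Theorem \ref{thm:p-energy} for $p\ge 2$ requires the additional assumption that the simplex is maximal, as the paper notes after Theorem \ref{thm:p-energy}.

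The one genuine issue is the range of $p$. Your primary mechanism --- equality in the chain of Theorem \ref{thm:p-bound} --- exists only for $p\in(1,\infty]$: for $p\in(0,1)$ or $p<0$ the conjugate exponent $q=p/(p-1)$ leaves $[1,\infty]$, H\"{o}lder's inequality fails or reverses, and ``tightness of the chain'' proves nothing, yet the theorem asserts the identity for all $p\neq 0,1$. You half-acknowledge this by offering the direct evaluation of $E(\alpha)=Mb(b-1)(\alpha-1)^p+M(M-1)b^2(|\gamma|\beta)^p$ as an alternative route, but for those ranges it is not an alternative --- it is the only proof, and it is exactly what the paper carries out: substituting the proportionality relation gives $E=bM(\alpha-1)^p\bigl(b-1+(bM-b)\beta^q\bigr)$, and the identity $\beta^{q-(q-p)/p}=\beta^2=\frac{M-k}{k(M-1)}$ from Lemma \ref{lem:tight-simplex} shows $b-1+(bM-b)\beta^q=(\alpha-1)^{-1}$, whence $E^{1/p}=bM\,(bM\,D)^{-(p-1)/p}$ with $D=b-1+(bM-b)\beta^q$, which is the stated right-hand side since $(bM)^2\bigl(\beta^q+\frac{1-\beta^q}{M}\bigr)-bM=bM\,D$. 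Since you have every ingredient in place (the two off-diagonal magnitudes, their multiplicities, the correct $\alpha^\ast$), this is an incompleteness of execution rather than of idea, but as written your proof covers only $p\in(1,\infty]$ and the deferred computation must actually be performed to obtain the full statement.
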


\begin{proof}
For the construction we use one of the codes built from the matrix $C$ via Lemma \ref{lem:family}. We just need to choose the right value of the parameter $\alpha$. The way to guess what $\alpha$ should be is to find when the inequality of Theorem \ref{thm:p-bound} becomes the exact equality. The largest eigenvalue of $C$ is, as was mentioned before, $\lambda=\frac M k$. If $A=C\left(\alpha,\frac {-b\lambda + (b\lambda-1)\alpha} {b(1-\lambda)},\frac {b+(1-b)\alpha} {b(1-\lambda)}\right)$, the off-diagonal entries $|A_{ij}|$ are $\alpha-1$ and $\beta \frac {b+(1-b)\alpha} {b(\lambda-1)}$ ($\alpha$ must be at least 1 and no greater than $\frac b {b-1}$). In order for the H\"{o}lder inequality in the proof of Theorem \ref{thm:p-bound} to become the exact equality, the following condition on corresponding entries of the matrices $A$ and $C\otimes J_p$ must hold:

$$\frac {(\alpha-1)^p} {1^q} = \frac {\left(\beta \frac {b+(1-b)\alpha} {b(\lambda-1)}\right)^p} {\beta^q},$$
where $q$ satisfies $\frac 1 p + \frac 1 q=1$.

From this equality we deduce $$\alpha = 1 + \frac 1 {b-1 + \beta^{\frac {q-p} p}b(\lambda-1)}=1 + \frac 1 {b-1 + \beta^{\frac {q-p} p}b(\frac M k-1)}.$$
Let us show that the set constructed for this $\alpha$ satisfies the required condition. The construction for $p=\infty$ can be obtained as a limit so for the remaining part of the proof we use only real $p$.

$$\sum\limits_{i\neq j} |A_{ij}|^p = (b^2-b)M (\alpha-1)^p + ((bM)^2-b^2M) \left(\beta \frac {b+(1-b)\alpha} {b(\lambda-1)}\right)^p$$
Using the condition on $\alpha$ we get

$$\sum\limits_{i\neq j} |A_{ij}|^p= (b^2-b)M (\alpha-1)^p + ((bM)^2-b^2M) (\alpha-1)^p \beta^q=$$

$$=bM \frac {b-1+(bM-b)\beta^q} {\left(b-1 + b(\frac M k-1)\beta^{\frac {q-p} p}\right)^p}.$$
This should be equal to

$$\frac {(bM)^p} {\left((bM)^2 \left(\beta^q + \frac 1 M (1-\beta^q)\right)-bM\right)^{p-1}}=$$

$$=\frac {(bM)^p} {\left(bM(bM-b) \beta^q + bM (b-1)\right)^{p-1}}=$$

$$=\frac {bM} {\left(b-1+(bM-b) \beta^q\right)^{p-1}}.$$
Comparing the two expressions we see that it is sufficient to show that

$$b-1+(bM-b)\beta^q = b-1 + b\left(\frac M k-1\right)\beta^{\frac {q-p} p}.$$
This equality is indeed true because

$$\beta^{q-\frac {q-p} p}=\beta^2=\frac {M-k} {k(M-1)}$$
by Lemma \ref{lem:tight-simplex}.
\end{proof}

The construction of Theorem \ref{thm:constr} works for all tight simplices and almost all values of $p$. In case the tight simplex is also a maximal simplex for this space and $p\geq 2$, the construction of Theorem \ref{thm:constr} satisfies the lower bound of Theorem \ref{thm:p-energy}. We can analyze the situation when this lower bound is attained in more detail.

We know from Theorem \ref{thm:q-measures} that the maximizing isotropic measure is necessarily the uniform measure over a maximal simplex. Assume this simplex is fixed. Then the tight frame in the proof of Theorem \ref{thm:p-bound} is defined uniquely up to unitary transformations and multiplications of each vector by an arbitrary unit number from $K$. Following the proof we can see that $A_{ij} \langle y_j, y_i \rangle$ must necessarily be a negative real number. This essentially means that the construction from Theorem \ref{thm:constr} is required for attaining the bound. The necessary value of a parameter $\alpha$ is explicitly found in the proof of Theorem \ref{thm:constr}. We conclude that the minimizing projective code is uniquely (up to isometries) defined by a maximal simplex in $K\mathbb{P}^{N-d-1}$  and the minimizing matrix from $K^{N\times N}$ of rank $d$ is defined as a Gram matrix of such code, i.e. up to the equivalence relation $x\sim x e$ for any $e\in K^{\times}$ for all unit representatives $x$ of the points of the projective code.

The real maximal simplices in $\mathbb{RP}^1, \mathbb{RP}^2, \mathbb{RP}^6, \mathbb{RP}^{22}$ are known to be unique \cite{sei68, goe75}. There are complex maximal simplices that are known to be unique too, for instance, the one in $\mathbb{CP}^1$ we already discussed in Subsection \ref{subsect:comp} \cite{zau99}. For them, the minimizing codes constructed by Theorem \ref{thm:constr} and satisfying Theorem \ref{thm:p-bound} are unique as well. As an explicit example of such a code we consider the smallest non-trivial real case: projective codes with 6 points in $\mathbb{RP}^3$, i.e $K=\mathbb{R}$ and $d=4$, or, more generally, real $6\times 6$ matrices of rank 4 with the unit diagonal. The Gale dual tight frame consists of 6 points in $\mathbb{R}^2$. By Theorem \ref{thm:q-measures}, the unique maximizer of the $q$-th moment for any $q\in[1,2)$ among all isotropic measures is the uniform distribution over the diagonals of a regular hexagon. The unique maximizer among tight frames of 6 points in $\mathbb{R}^2$ is then a set consisting of two copies of diagonals of a regular hexagon. We can choose the following Gram matrix of its unit representatives:

$$
\begin{bmatrix}
1&1&-\frac 1 2&-\frac 1 2&-\frac 1 2&-\frac 1 2\\
1&1&-\frac 1 2&-\frac 1 2&-\frac 1 2&-\frac 1 2\\
-\frac 1 2&-\frac 1 2&1&1&-\frac 1 2&-\frac 1 2\\
-\frac 1 2&-\frac 1 2&1&1&-\frac 1 2&-\frac 1 2\\
-\frac 1 2&-\frac 1 2&-\frac 1 2&-\frac 1 2&1&1\\
-\frac 1 2&-\frac 1 2&-\frac 1 2&-\frac 1 2&1&1
\end{bmatrix}.
$$

The mimizing matrices from Theorem \ref{thm:constr} will be then defined by

$$
\begin{bmatrix}
1&1-\alpha&1-\frac 1 2\alpha&1-\frac 1 2\alpha&1-\frac 1 2\alpha&1-\frac 1 2\alpha\\
1-\alpha&1&1-\frac 1 2\alpha&1-\frac 1 2\alpha&1-\frac 1 2\alpha&1-\frac 1 2\alpha\\
1-\frac 1 2\alpha&1-\frac 1 2\alpha&1&1-\alpha&1-\frac 1 2\alpha&1-\frac 1 2\alpha\\
1-\frac 1 2\alpha&1-\frac 1 2\alpha&1-\alpha&1&1-\frac 1 2\alpha&1-\frac 1 2\alpha\\
1-\frac 1 2\alpha&1-\frac 1 2\alpha&1-\frac 1 2\alpha&1-\frac 1 2\alpha&1&1-\alpha\\
1-\frac 1 2\alpha&1-\frac 1 2\alpha&1-\frac 1 2\alpha&1-\frac 1 2\alpha&1-\alpha&1
\end{bmatrix},
$$
where $\alpha=1+\frac 1 {1 + 2^{\frac {p-2} {p-1}}}$ provides the one-parametric family of unique minimizing configurations for $p\in(2,+\infty)$. The values of $\alpha$ vary from $\frac 4 3$ to $\frac 3 2$ forming a line segment of minimizers in the vector space of matrices. Projective codes defined by these matrices are unique up to isometries. For each unit representative vector in $\mathbb{R}^4$, there are two opposite choices giving us $2^6$ options. Changing all 6 signs preserves a matrix so there are precisely $2^5=32$ minimizing matrices for each particular $p$.

\section{Discussion}\label{sect:disc}

In this section, we discuss open questions and possible directions of further research in this area.

\begin{enumerate}

\item The natural extension of the linear programming approach for packing and optimality problems is the semidefinite programming approach. Developed initially by Schrijver in the discrete setup \cite{sch05}, it was adapted to packing problems by Bachoc and Vallentin \cite{bac08a} and since then was generalized and successfully used in a variety of discrete geometry and optimization problems \cite{mus14,coh12,del14}. One way to explain semidefinite constraints in this approach is to project points of a spherical code to a unit subsphere of a smaller dimension and use the constraints set by (\ref{eqn:zsf}). A similar approach seems possible for the case of tight frames/isotropic measures. When a tight frame is projected to a subspace, the set of projections form a tight frame in this subspace too, although its vectors should be normalized to suit the restrictions posed on the frame constant. 

\item It seems plausible to extend this approach to packing/optimization problems for Grassmannians. It is possible to extend the notion of Gale transform to linear subspaces (see \cite[p. 133]{eis00}). Tight fusion frames is a natural generalization of tight frames for subspaces \cite{cas04,cas08}. The linear programming machinery is applicable to Grassmannians as well \cite{bac06}. All ingredients of the energy bounds working together for projective codes are present in the Grassmanian case too.    

\item It would be interesting to find out an explanation for computational results in the case of tight frames with 5 points in $\mathbb{R}^3$ as described in Subsection \ref{subsect:comp}. If there is a certain configuration behind this computational bound, it would be interesting to find it. As a counterpart of this question, the case of 5 points on the unit sphere in $\mathbb{R}^3$ is notoriously hard for finding energy minima \cite{sch13, sch16}.

\item The Gale duality of projective codes and tight frames allows us to connect the energies of both objects via H\"{o}lder's inequality. It makes sense to try extending this connection to other energy potentials using the same duality or finding new types of duality with the prospect of finding new energy bounds.

\end{enumerate}

\section{Acknowledgments}
The author would like to thank Bill Martin for initiating this new direction of research on nearly orthogonal vectors, Boris Bukh and Chris Cox for sharing their manuscript and discussing their results, the organizers of the Informal Analysis Seminar at Kent State University where the author learned about these results, Dmitriy Bilyk, Ryan Matzke, Josiah Park, and Oleksandr Vlasiuk for invaluable discussions on a variety of mathematical questions relevant to this research. This material is based upon work supported by the National Science Foundation under Grant No. DMS-1439786 while the author was in residence at the Institute for Computational and Experimental Research in Mathematics in Providence, RI, during the Spring 2018 semester.


\bibliographystyle{amsplain}

\begin{thebibliography}{99}

\bibitem{and96}
N.~N. Andreev.
\newblock An extremal property of the icosahedron.
\newblock {\em East J. Approx.}, 2(4):459--462, 1996.

\bibitem{and97}
N.~N. Andreev.
\newblock Location of points on a sphere with minimal energy.
\newblock {\em Tr. Mat. Inst. Steklova}, 219(Teor. Priblizh. Garmon. Anal.):27--31, 1997.

\bibitem{ans04}
K.~M. Anstreicher.
\newblock The thirteen spheres: a new proof.
\newblock {\em Discrete Comput. Geom.}, 31(4):613--625, 2004.

%

\bibitem{app19}
M.~Appleby, I.~Bengtsson, S.~Flammia, and D.~Goyeneche.
\newblock Tight Frames, Hadamard Matrices and Zauner's Conjecture
\newblock {\em arXiv preprint arXiv:1903.06721}, 2019.

\bibitem{bac06}
C.~Bachoc.
\newblock Linear programming bounds for codes in {G}rassmannian spaces.
\newblock {\em IEEE Trans. Inform. Theory}, 52(5):2111--2125, 2006.

\bibitem{bac10}
C.~Bachoc, D.~C. Gijswijt, A.~Schrijver, and F.~Vallentin.
\newblock Invariant semidefinite programs.
\newblock In {\em Handbook on semidefinite, conic and polynomial optimization}, volume 166 of {\em Internat. Ser. Oper. Res. Management Sci.}, 219--269. Springer, New York, 2012.

\bibitem{bac08a}
C.~Bachoc and F.~Vallentin.
\newblock New upper bounds for kissing numbers from semidefinite programming.
\newblock {\em J. Amer. Math. Soc.}, 21(3):909--924, 2008.

\bibitem{bac09b}
C.~Bachoc and F.~Vallentin.
\newblock Semidefinite programming, multivariate orthogonal polynomials, and codes in spherical caps.
\newblock {\em European J. Combin.}, 30(3):625--637, 2009.

\bibitem{bal09}
B.~Ballinger, G.~Blekherman, H.~Cohn, N.~Giansiracusa, E.~Kelly, and A.~Sch\"{u}rmann.
\newblock Experimental study of energy-minimizing point configurations on spheres.
\newblock {\em Experiment. Math.}, 18(3):257--283, 2009.

\bibitem{ban04}
E.~Bannai, A.~Munemasa, and B.~Venkov.
\newblock The nonexistence of certain tight spherical designs.
\newblock {\em Algebra i Analiz}, 16(4):1--23, 2004.

\bibitem{bar15}
A.~Barg, A.~Glazyrin, K.~A.~Okoudjou, and W.-H.~Yu.
\newblock Finite two-distance tight frames.
\newblock {\em Linear Algebra and its Applications} 475: 163--175, 2015.

\bibitem{bar09b}
A.~Barg and P.~Purkayastha.
\newblock Bounds on ordered codes and orthogonal arrays.
\newblock {\em Mosc. Math. J.}, 9(2):211--243, back matter, 2009.


\bibitem{bil19}
D.~Bilyk, A.~Glazyrin, R.~Matzke, J.~Park, and O.~Vlasiuk.
\newblock Optimal measures for p-frame energies on spheres.
\newblock {\em arXiv preprint arXiv:1908.00885}, 2019.

\bibitem{bil19a}
D.~Bilyk, A.~Glazyrin, R.~Matzke, J.~Park, and O.~Vlasiuk.
\newblock Energy on spheres and discreteness of minimizing measures.
\newblock {\em arXiv preprint arXiv:1908.10354}, 2019.


\bibitem{boy18}
P.~Boyvalenkov, P.~Dragnev, D.~Hardin, E.~Saff, and M.~Stoyanova.
\newblock Energy bounds for codes in polynomial metric spaces.
\newblock {\em arXiv preprint arXiv:1804.07462}, 2018.

\bibitem{buk18}
B.~Bukh and C.~Cox.
\newblock Nearly orthogonal vectors and small antipodal spherical codes.
\newblock {\em Isr. J. Math.}, 238, 359--388 (2020).

\bibitem{cah19}
J.~Cahill and P.~G.~Casazza.
\newblock Optimal Parseval frames: Total coherence and total volume.
\newblock {\em arXiv preprint arXiv:1910.01733}, 2019.

\bibitem{cas13}
P.~G.~Casazza, M.~Fickus, D.~G.~Mixon, J.~Peterson, and I.~Smalyanau.
\newblock Every Hilbert space frame has a Naimark complement.
\newblock {\em Journal of Mathematical Analysis and Applications}, 406(1), 111--119, 2013.

\bibitem{cas04}
P.G.~Casazza, G.~Kutyniok.
\newblock Frames of subspaces.
\newblock {\em Contemp. Math.} 345, 87--113, 2004.

\bibitem{cas08}
P.G. Casazza, G. Kutyniok.
\newblock Robustness of fusion frames under erasures of subspaces and of local frame vectors.
\newblock {\em Contemp. Math.} 464: 149--160, 2008.

\bibitem{cob15}
A.B.~Coble
\newblock Point sets and allied Cremona groups.
\newblock {\em Proceedings of the National Academy of Sciences of the United States of America} 1, no. 4: 245--248, 1915.

\bibitem{coh56}
H.~Cohn.
\newblock Stability configurations of electrons on a sphere.
\newblock {\em Math. Tables Aids Comput.}, 10:117--120, 1956.

\bibitem{coh03}
H.~Cohn and N.~Elkies.
\newblock New upper bounds on sphere packings. {I}.
\newblock {\em Ann. of Math. (2)}, 157(2):689--714, 2003.

\bibitem{coh07}
H.~Cohn and A.~Kumar.
\newblock Universally optimal distribution of points on spheres.
\newblock {\em J. Amer. Math. Soc.}, 20(1):99--148, 2007.

\bibitem{coh09}
H.~Cohn and A.~Kumar.
\newblock Optimality and uniqueness of the {L}eech lattice among lattices.
\newblock {\em Ann. of Math. (2)}, 170(3):1003--1050, 2009.

\bibitem{coh16}
H.~Cohn, A.~Kumar, and G.~Minton.
\newblock Optimal simplices and codes in projective spaces.
\newblock {\em Geometry \& Topology}, 20(3): 1289--1357, 2016.

\bibitem{coh17a}
H.~Cohn, A.~Kumar, S.~D. Miller, D.~Radchenko, and M.~Viazovska.
\newblock The sphere packing problem in dimension 24.
\newblock {\em Ann. of Math. (2)}, 185(3):1017--1033, 2017.

\bibitem{coh19}
H.~Cohn, A.~Kumar, S.~D. Miller, D.~Radchenko, and M.~Viazovska.
\newblock Universal optimality of the $E_8$ and Leech lattices and interpolation formulas.
\newblock {\em arXiv preprint arXiv:1902.05438}, 2019.

\bibitem{coh12}
H.~Cohn and J.~Woo
\newblock Three-point bounds for energy minimization.
\newblock {\em Journal of the American Mathematical Society} 25, no. 4: 929--958, 2012.

\bibitem{coh14}
H.~Cohn and Y.~Zhao.
\newblock Sphere packing bounds via spherical codes.
\newblock {\em Duke Math. J.}, 163(10):1965--2002, 2014.

\bibitem{che19}
X.~Chen, V.~Gonzales, E.~Goodman, S. Kang and K.~A.~Okoudjou.
\newblock Universal optimal configurations for the $p$-frame potentials.
\newblock {\em arXiv preprint arXiv:1902.03505}, 2019.

\bibitem{con96}
J.H.~Conway, R.H.~Hardin, N.J.A.~Sloane.
\newblock Packing lines, planes, etc.: Packings in Grassmannian spaces.
\newblock {\em Experimental mathematics} 5, no. 2: 139--159, 1996.

\bibitem{del14}
D.~De~Laat, F.M.~de~Oliveira~Filho, and F.~Vallentin.
\newblock Upper bounds for packings of spheres of several radii.
\newblock {\em Forum of Mathematics, Sigma}, vol. 2. Cambridge University Press, 2014.

\bibitem{del72}
P.~Delsarte.
\newblock Bounds for unrestricted codes, by linear programming.
\newblock {\em Philips Res. Rep.}, 27:272--289, 1972.

\bibitem{del73}
P.~Delsarte.
\newblock An algebraic approach to the association schemes of coding theory.
\newblock {\em Philips Res. Rep. Suppl.}, (10):vi+97, 1973.

\bibitem{del77}
P.~Delsarte, J.~M. Goethals, and J.~J. Seidel.
\newblock Spherical codes and designs.
\newblock {\em Geometriae Dedicata}, 6(3):363--388, 1977.

\bibitem{ehl12}
M.~Ehler and K.~A.~Okoudjou.
\newblock Minimization of the probabilistic p-frame potential.
\newblock {\em J. Stat. Plan. Inference}, 142(3): 645--659, 2012.

\bibitem{eis00}
D.~Eisenbud and S.~Popescu.
\newblock The Projective Geometry of the Gale Transform.
\newblock {\em Journal of algebra}, 230, no. 1: 127--173, 2000.

%

\bibitem{fop12}
L.~F\"{o}ppl.
\newblock Stabile {A}nordnungen von {E}lektronen im {A}tom.
\newblock {\em J. Reine Angew. Math.}, 141:251--302, 1912.

\bibitem{fuc17}
C.~A. Fuchs, M.~C. Hoang, and B.~C. Stacey.
\newblock The SIC question: History and state of play.
\newblock {\em Axioms}, 6(3):21, 2017.

\bibitem{gal56}
D.~Gale.
\newblock Neighboring vertices on a convex polyhedron.
\newblock {\em Linear inequalities and related system} 38: 255--263, 1956.

\bibitem{gla19}
A.~Glazyrin and J.~Park.
\newblock Repeated minimizers of p-frame energies.
\newblock {\em arXiv preprint arXiv:1901.06096}, 2019.

\bibitem{gla18}
A.~Glazyrin and W.-H.~Yu.
\newblock Upper bounds for {$s$}-distance sets and equiangular lines.
\newblock {\em Adv. Math.}, 330:810--833, 2018.

\bibitem{goe75}
J.-M. Goethals and J. J. Seidel.
\newblock The regular two-graph on 276 vertices.
\newblock {\em Discr. Math.} 12 (1975) 143-158.

\bibitem{gor00}
D.~V. Gorbachev.
\newblock An extremal problem for entire functions of exponential spherical type, which is connected with the {L}evenshtein bound for the density of a packing of {${\mathbb R}^n$} by balls.
\newblock {\em Izv. Tul. Gos. Univ. Ser. Mat. Mekh. Inform.}, 6(1, Matematika):71--78, 2000.

\bibitem{hol04}
R.~B. Holmes and V.~I. Paulsen.
\newblock Optimal frames for erasures.
\newblock {\em Linear Algebra and its Applications}, 377:31--51, 2004.


\bibitem{kab78}
G.~A. Kabatyansky and V.~I. Levenshtein.
\newblock Bounds for packings on the sphere and in space.
\newblock {\em Problems of Information Transmission}, 14(1):3--25, 1978.


\bibitem{kol94}
A.~V. Kolushov and V.~A. Yudin.
\newblock On the {K}orkin-{Z}olotarev construction.
\newblock {\em Diskret. Mat.}, 6(1):155--157, 1994.

\bibitem{kol97}
A.~V. Kolushov and V.~A. Yudin.
\newblock Extremal dispositions of points on the sphere.
\newblock {\em Anal. Math.}, 23(1):25--34, 1997.


\bibitem{lem73}
P.~W.~H. Lemmens and J.~J. Seidel.
\newblock Equiangular lines.
\newblock {\em J. Algebra}, 24:494--512, 1973.

\bibitem{lev79}
V.~I. Levenshtein.
\newblock Boundaries for packings in {$n$}-dimensional {E}uclidean space.
\newblock {\em Dokl. Akad. Nauk SSSR}, 245(6):1299--1303, 1979.

\bibitem{lev92}
V.~I. Levenshtein.
\newblock Designs as maximum codes in polynomial metric spaces.
\newblock {\em Acta Appl. Math.}, 29(1-2):1--82, 1992.
\newblock Interactions between algebra and combinatorics.

\bibitem{luk18}
Luk\'{a}cs.
\newblock Verscharfung der ersten Mittelwersatzes der Integralrechnung fur rationale Polynome.
\newblock {\em Math. Zeitschrift}, 2: 229--305, 1918. 

\bibitem{lyu09}
Yu.I.~Lyubich
\newblock On tight projective designs.
\newblock {\em Designs, Codes and Cryptography} 51, no. 1: 21--31, 2009.

\bibitem{mag19}
M.~Magsino, D.G.~Mixon, H.~Parshall.
\newblock A Delsarte-Style Proof of the Bukh-Cox Bound.
\newblock {\em arXiv preprint arXiv:1902.00926}, 2019.

\bibitem{mak02}
A.A.~Makhnev.
\newblock On the Nonexistence of Strongly Regular Graphs with Parameters (486, 165, 36, 66).
\newblock {\em Ukrainian Math. J.} 54: 1137--1146, 2002.

\bibitem{mar06}
A.A.~Markov.
\newblock Lecture notes on functions with the least deviation from zero, 1906.
\newblock Reprinted in {\em Markov A.A. Selected Papers} (ed. N. Achiezer), GosTechlzdat, 244--291, 1948, Moscow (in Russian).

\bibitem{mce77}
R.~J. McEliece, E.~R. Rodemich, H.~Rumsey, Jr., and L.~R. Welch.
\newblock New upper bounds on the rate of a code via the {D}elsarte-{M}ac{W}illiams inequalities.
\newblock {\em IEEE Trans. Information Theory}, IT-23(2):157--166, 1977.

\bibitem{mel77}
T.~W. Melnyk, O.~Knop, and W.~R. Smith.
\newblock Extremal arrangements of points and unit charges on a sphere:
  equilibrium configurations revisited.
\newblock {\em Canadian Journal of Chemistry}, 55(10):1745--1761, 1977.

\bibitem{mus06}
O.~R. Musin.
\newblock The kissing problem in three dimensions.
\newblock {\em Discrete Comput. Geom.}, 35(3):375--384, 2006.

\bibitem{mus08a}
O.~R. Musin.
\newblock The kissing number in four dimensions.
\newblock {\em Ann. of Math. (2)}, 168(1):1--32, 2008.

\bibitem{mus08b}
O.~R. Musin.
\newblock Bounds for codes by semidefinite programming.
\newblock {\em Tr. Mat. Inst. Steklova}, 263(Geometriya, Topologiya i Matematicheskaya Fizika. I):143--158, 2008.

\bibitem{mus14}
O.~R. Musin.
\newblock Multivariate positive definite functions on spheres.
\newblock In {\em Discrete geometry and algebraic combinatorics}, volume 625 of {\em Contemp. Math.}, 177--190. Amer. Math. Soc., Providence, RI, 2014.

\bibitem{neb12}
G.~Nebe and B.~Venkov.
\newblock On tight spherical designs.
\newblock {\em Algebra i Analiz}, 24(3):163--171, 2012.

\bibitem{nes98}
Y.~Nesterov.
\newblock Squared functional systems and optimization problems.
\newblock In {\em High performance optimization}, 405--440. Springer, Boston, MA, 2000.

\bibitem{odl79}
A.~M. Odlyzko and N.~J.~A. Sloane.
\newblock New bounds on the number of unit spheres that can touch a unit sphere in {$n$} dimensions.
\newblock {\em J. Combin. Theory Ser. A}, 26(2):210--214, 1979.

\bibitem{okt07}
O.~Oktay.
\newblock Frame quantization theory and equiangular tight frames.
\newblock PhD thesis, University of Maryland (2007).

\bibitem{pra02}
S.~Prajna, A.~Papachristodoulou, and P.A.~Parrilo.
\newblock Introducing SOSTOOLS: A general purpose sum of squares programming solver.
\newblock In {\em Proceedings of the 41st IEEE Conference on Decision and Control}, 2002., vol. 1: 741--746, 2002.


\bibitem{saf97}
E.~B. Saff and A.~B.~J. Kuijlaars.
\newblock Distributing many points on a sphere.
\newblock {\em Math. Intelligencer}, 19(1):5--11, 1997.

\bibitem{sch42}
I.~J. Schoenberg.
\newblock Positive definite functions on spheres.
\newblock {\em Duke Math. J.}, 9:96--108, 1942.

\bibitem{sch05}
A.~Schrijver.
\newblock New code upper bounds from the {T}erwilliger algebra and semidefinite programming.
\newblock {\em IEEE Trans. Inform. Theory}, 51(8):2859--2866, 2005.

\bibitem{sch13}
R.~E. Schwartz.
\newblock The five-electron case of {T}homson's problem.
\newblock {\em Experimental Mathematics}, 22(2):157--186, 2013.

\bibitem{sch16}
R.~E. Schwartz.
\newblock The phase transition in five point energy minimization.
\newblock 2016.


\bibitem{sei68}
J. J. Seidel.
\newblock Strongly regular graphs with (-1,1,0) adjacency matrix having eigenvalue 3.
\newblock {\em Lin. Alg. Appl.} 1 (1968) 281-298.

\bibitem{sey84}
P.D~Seymour and T.~Zaslavsky.
\newblock Averaging sets: a generalization of mean values and spherical designs.
\newblock {\em Advances in Mathematics} 52, no. 3: 213--240, 1984.

\bibitem{sid74}
V.~M. Sidel'nikov.
\newblock {\em New estimates for the closest packing of spheres in $n$-dimensional Euclidean space}.
\newblock  Mat. Sb. {\bf 24} (1974), 148--158.

\bibitem{sma98}
S.~Smale.
\newblock Mathematical problems for the next century.
\newblock {\em Math. Intelligencer}, 20(2):7--15, 1998.

\bibitem{str03}
T.~Strohmer and R.~W. Heath.
\newblock Grassmannian frames with applications to coding and communication.
\newblock {\em Applied and computational harmonic analysis}, 14(3):257--275,
  2003.

\bibitem{stu99}
J.S.~Sturm
\newblock Using SeDuMi 1.02, a MATLAB toolbox for optimization over symmetric cones.
\newblock {\em Optimization methods and software} 11, no. 1-4: 625--653, 1999.

\bibitem{sus07}
M.~A. Sustik, J.~A. Tropp, I.~S. Dhillon, and R.~W. Heath.
\newblock On the existence of equiangular tight frames.
\newblock {\em Linear Algebra and its applications}, 426(2):619--635, 2007.

\bibitem{tam30}
P.~M.~L. Tammes.
\newblock {\em On the origin of number and arrangement of the places of exit on
  the surface of pollen-grains}.
\newblock Amsterdam, de Bussy, 1930.

\bibitem{tay95}
M.~Taylor.
\newblock Cubature for the Sphere and the Discrete Spherical Harmonic Transform.
\newblock {\em SIAM Journal on Numerical Analysis}, Vol. 32, No. 2, pp. 667--670, 1995.



\bibitem{ven01}
B.~Venkov.
\newblock R\'{e}seaux et designs sph\'{e}riques.
\newblock {\em R\'{e}seaux euclidiens, designs sph\'{e}riques et formes modulaires}, L'Enseignement math\'{e}matique,
Monograph no. 37: 10--86, 2001.

\bibitem{via17}
M.~S. Viazovska.
\newblock The sphere packing problem in dimension 8.
\newblock {\em Ann. of Math. (2)}, 185(3):991--1015, 2017.


\bibitem{why52}
L.~L. Whyte.
\newblock Unique arrangements of points on a sphere.
\newblock {\em Amer. Math. Monthly}, 59:606--611, 1952.


\bibitem{wel74}
L.~Welch.
\newblock Lower bounds on the maximum cross correlation of signals.
\newblock {\em IEEE Trans. Inform. Theory}, 20(3): 397--399, 1974.

\bibitem{yud92}
V.~A. Yudin.
\newblock Minimum potential energy of a point system of charges.
\newblock {\em Diskret. Mat.}, 4(2):115--121, 1992.

\bibitem{zau99}
G.~Zauner.
\newblock Grundz{\"u}ge einer nichtkommutativen Designtheorie.
\newblock PhD thesis, University of Vienna, 1999.
\newblock Published in English translation: {\em Int. J. Quantum Inf.}, 9(1):445--507, 2011.

\end{thebibliography}


\begin{appendices}

\section{Comparing bounds for moments of isotropic measures}\label{app:comp}

In this appendix, we prove Lemma \ref{lem:comp} comparing the bounds of Theorem \ref{thm:yudin} and Corollary \ref{cor:q-measures} for moments of isotropic measures with a fixed number of points.

\comp*

\begin{proof}

When $q=2$, both bounds are equal to $\frac {N^2} d - N$ which is the exact value of $\sum\limits_{i\neq j} |\langle y_1, y_j \rangle|^2$ for any tight frame with the frame constant $\frac N d$. For the remaining part of the proof we assume $q<2$.

It is easy to check that $\beta>\sqrt{\frac {N-d} {d(N-1)}}$ when $N<M$ and $\beta<\sqrt{\frac {N-d} {d(N-1)}}$ when $N>M$. We denote $\sqrt{\frac {N-d} {d(N-1)}}$ by $\alpha$.

Assume $N<M$. Then we know that $\beta>\alpha$. The inequality can be rewritten as

$$N \left( \beta^q + \frac {1-\beta^q} {M} \right) - 1 > (N-1) \alpha^q;$$

$$(N-1)(\beta^q-\alpha^q)+\left( 1 - \frac N M \right)\beta^q >  1 - \frac N M;$$

$$\frac {M(N-1)} {M-N} (\beta^q-\alpha^q)+\beta^q >  1.$$

We denote the left-hand side of this inequality by $\xi(q)$, $q\in[1,2]$. Since $\xi'(q)$ can be written as $c_1 \alpha^q + c_2 \beta^q = \beta^q (c_1 (\frac \alpha \beta)^q + c_2)$ and $c_1\neq 0$, $\xi'(q)$ cannot possibly have more than one zero on $[1,2]$. We know that $\xi(2)=1$ so if we can show that $\xi(1)>1$ and $\xi'(2)<0$, then we can claim that $\xi(q)>1$ for $q\in[1,2)$. Otherwise, $\xi'(q)$ would have at least two zeros.

For the first part, we want to prove

$$\frac {M(N-1)} {M-N} (\beta-\alpha)+\beta >  1.$$

We rewrite this inequality as

$$\frac {M-1} {M} \beta + \frac 1 M > \frac {N-1} {N} \alpha + \frac 1 N.$$

Introducing the function $\phi(t) = \frac {t-1} t \sqrt{\frac {t-d} {d(t-1)}} + \frac 1 t$ for all $t\geq d$, we can see that the inequality is equivalent to $\phi(M)>\phi(N)$. It is easy to check that $\phi$ is a strictly increasing function so this inequality must hold.

For the second part, we calculate the derivative of $\xi$ at 2.

$$\xi'(2)=\frac {M(N-1)} {M-N} (\beta^2 \ln \beta- \alpha^2\ln\alpha)+ \beta^2\ln\beta = $$

$$= \ln\beta \left(\frac {M(N-1)} {M-N} (\alpha^2-\beta^2)+\beta^2\right) + (\ln\alpha - \ln\beta) \frac {M(N-1)} {M-N} \alpha^2.$$

Then we use $\xi(2)=1$.

$$\xi'(2) = \ln\beta + (\ln\alpha - \ln\beta) \frac {M(N-1)} {M-N} \alpha^2 = $$

$$= \ln\beta + (\ln\alpha - \ln\beta) \frac {M(N-1)} {M-N} \frac {N-d} {d(N-1)} = \ln\beta + (\ln\alpha - \ln\beta) \frac {M(N-d)} {(M-N)d} = $$

$$=\ln\beta \frac {N(M-d)} {(M-N)d} - \ln\alpha \frac {M(N-d)} {(M-N)d} = \frac {MN} {(M-N)d} \left(\ln\beta \frac {M-d} M - \ln\alpha \frac {N-d} {N}\right).$$

Here we introduce the function $\psi(t)=\frac {t-d} t \ln \sqrt{\frac {t-d} {d(t-1)}}$ for $t>d$ and see that $\xi'(2)<0$ if $\psi(M)<\psi(N)$. It is easy to check that $\psi$ is decreasing for $t>d$ so the required inequality must hold.

The case $N>M$ can be done the same way with the corresponding changes in signs of the inequalities.

%

\end{proof}

\section{Discrete isotropic measures with orthogonal vectors}\label{app:orth}

In this appendix, we prove that discrete isotropic measures with a pair of orthogonal vectors cannot maximize the $q$-th moments for $q\in[1,2)$.

\orth*

\begin{proof}
We consider a discrete isotropic measure with a pair of orthogonal vectors and provide a perturbation of this measure whose $q$-th moment is larger.

Assume $supp(\mu)=\{u,u_1,\ldots,u_N\}$ and $u$ is orthogonal to $u_1$. Let $\mu(u)=p$ and $\mu(u_i)=p_i$ for all $i$. In a subspace orthogonal to $u$ we choose a regular simplex $\{v_1,v_2,\ldots,v_d\}$ such that $|v_i|=1$ for all $i$ and $v_1=\frac {u_1} {|u_1|}$. We fix a small parameter $\delta>0$ and define the perturbed measure $\mu'$ as follows. $supp(\mu')=\{v_1',\ldots, v_d',u_1',\ldots,u_N'\}$, where $v_i'=\alpha u +\delta v_i$ for a fixed real constant $\alpha$ and for all $i$, $1\leq i\leq d$, and $u_i'=\beta u_i$ for a fixed real constant $\beta$ and for all $i$, $1\leq i\leq N$. $\mu'(u_i')=p_i$ for all $i$ and $\mu'(v_i')=\frac 1 d p$. Constants $\alpha$ and $\beta$ depend on $\delta$ and are chosen in such a manner that $\mu'$ is isotropic.

We will show that $\alpha=\sqrt{1+\frac {1} {d-1} \left(\frac 1 {|u|^2} - pd\right)\delta^2}$ and $\beta^2=\sqrt{1-\frac {pd } {d-1}\delta^2}$ satisfy the required condition. For an arbitrary $w\in K^d$ we represent it as a sum of orthogonal vectors $w_u+w_v$, where $w_u=u\gamma$ for some $\gamma\in K$ and $w_v$ is orthogonal to $u$. Using $|\langle u,w \rangle|^2 = |u|^2 |w_u|^2$ and the isotropy of the measure $\mu$ we get

\begin{equation}\label{eqn:rem_vect}
\sum\limits_{i=1}^N p_i |\langle u_i, w \rangle|^2 = \frac 1 d |w|^2 - p |\langle u,w \rangle|^2 = \frac 1 d |w|^2 - p|u|^2 |w_u|^2 = \left(\frac 1 d - p|u|^2\right) |w_u|^2 + \frac 1 d |w_v|^2.
\end{equation}

$$\sum\limits_{i=1}^d |\langle v_i', w \rangle|^2 = \sum\limits_{i=1}^d |\langle \alpha u + \delta v_i, w_u+w_v \rangle|^2 =\sum\limits_{i=1}^d |\alpha\langle u, w_u\rangle + \delta \langle v_i, w_v \rangle |^2=$$

$$=\sum\limits_{i=1}^d \overline{\alpha\langle u, w_u\rangle + \delta \langle v_i, w_v \rangle} (\alpha\langle u, w_u\rangle + \delta \langle v_i, w_v \rangle) =$$

$$=\sum\limits_{i=1}^d (\alpha^2 |\langle u,w_u \rangle|^2 + \alpha\delta \overline{\langle u, w_u\rangle}  \langle v_i, w_v \rangle +  \alpha\delta \langle u, w_u\rangle \overline{\langle v_i, w_v \rangle} + \delta^2 |\langle v_i,w_v \rangle|^2)=$$

$$=d\alpha^2 |u|^2 |w_u|^2 + \alpha\delta \overline{\langle u, w_u\rangle}  \langle \sum\limits_{i=1}^d v_i, w_v \rangle + \alpha\delta \langle u, w_u\rangle  \overline{\langle \sum\limits_{i=1}^d v_i, w_v \rangle} +\delta^2\sum\limits_{i=1}^d|\langle v_i,w_v \rangle|^2$$

Since $\sum_{i=1}^d v_i=0$, we can simplify this sum to

$$\sum\limits_{i=1}^d |\langle v_i', w \rangle|^2=d\alpha^2 |u|^2 |w_u|^2 +\delta^2\sum\limits_{i=1}^d|\langle v_i,w_v \rangle|^2.$$

Since the vectors $v_i$ form a regular simplex in a $(d-1)$-dimensional space, they represent a tight frame with the frame constant $\frac d {d-1}$. $w_v$ also belongs to this $(d-1)$-dimensional subspace so $\sum_{i=1}^d|\langle v_i,w_v \rangle|^2 = \frac d {d-1} |w_v|^2$. This implies the following equation.

\begin{equation}\label{eqn:pert}
\sum\limits_{i=1}^d |\langle v_i', w \rangle|^2=d\alpha^2 |u|^2 |w_u|^2 +\delta^2\frac d {d-1} |w_v|^2.
\end{equation}

Combining equations (\ref{eqn:rem_vect}) and (\ref{eqn:pert}) we conclude

$$\sum\limits_{i=1}^N p_i |\langle u_i', w \rangle|^2 + \sum\limits_{i=1}^d \frac p d |\langle v_i',w \rangle|^2=$$

$$=\beta^2\left(\left(\frac 1 d - p|u|^2\right) |w_u|^2 + \frac 1 d |w_v|^2\right) + \frac p d \left(d\alpha^2 |u|^2 |w_u|^2 +\delta^2\frac d {d-1} |w_v|^2\right)=$$

$$= \left(\beta^2\left(\frac 1 d - p|u|^2\right) + p\alpha^2|u|^2 \right) |w_u|^2 + \left(\frac {\beta^2} {d} + \frac {\delta^2 p} {d-1}\right) |w_v|^2=$$

$$=\frac 1 d |w_u|^2 + \frac 1 d |w_v|^2 = \frac 1 d |w|^2$$
so $\mu'$ is indeed isotropic.

Now we want to analyze how the $q$-th moment changes under the perturbation. There are three types of scalar profucts to check: $\langle u_i,u_j \rangle$, $\langle u,u \rangle$, and $\langle u_i,u \rangle$.

\begin{enumerate}

\item Since $\beta=1+O(\delta^2)$, all changes $p_i p_j|\langle u_i', u_j'\rangle|^q - p_ip_j|\langle u_i, u_j\rangle|^q$ are $O(\delta^2)$.

\item $|\langle v_i', v_j' \rangle|^q=|\alpha^2|u|^2 + \delta^2 \langle v_i, v_j \rangle|^q=|u|^{2q}+O(\delta^2).$ Summing over all pairs of $v_i', v_j'$ we get that

$$\sum\limits_{i,j=1}^d\frac {p^2} {d^2} |\langle v_i', v_j' \rangle|^q-p^2|\langle u,u \rangle|^q$$
is $O(\delta^2)$ as well.

\item First, we show that for $\langle u_k,u\rangle \neq 0$, the changes are $O(\delta^2)$ too.

$$\sum\limits_{i=1}^d |\langle u_k', v_i'\rangle|^q = \beta\sum\limits_{i=1}^d |\alpha \langle u_k, u \rangle + \delta \langle u_k,v_i\rangle|^q =\sum\limits_{i=1}^d |\alpha \langle u_k, u \rangle + \delta \langle u_k,v_i\rangle|^q + O(\delta^2) = $$

$$=\sum\limits_{i=1}^d \left(\overline{\alpha \langle u_k, u \rangle + \delta \langle u_k,v_i\rangle} (\alpha \langle u_k, u \rangle + \delta \langle u_k,v_i\rangle) \right)^{\frac q 2} +O(\delta^2)=$$

$$=\sum\limits_{i=1}^d \left(\alpha^2 |\langle u_k, u \rangle|^2 + \alpha\delta \overline{\langle u_k,u\rangle} \langle u_k,v_i\rangle + \alpha\delta \langle u_k,u\rangle \overline{\langle u_k,v_i\rangle} + \delta^2 |\langle u_k,v_i\rangle|^2) \right)^{\frac q 2} + O(\delta^2)=$$

$$=d \alpha^q |\langle u_k, u \rangle|^q + \frac q 2 \alpha\delta  (\alpha^2 |\langle u_k, u \rangle|^2)^{\frac q 2 -1} \sum\limits_{i=1}^d (\overline{\langle u_k,u\rangle} \langle u_k,v_i\rangle + \langle u_k,u\rangle \overline{\langle u_k,v_i\rangle}) + O(\delta^2)=$$

$$=d \alpha^q |\langle u_k, u \rangle|^q + \frac q 2 \alpha^{q-1} \delta  |\langle u_k, u \rangle|^{q -2}\left( \overline{\langle u_k,u\rangle} \langle u_k, \sum\limits_{i=1}^d v_i\rangle + \langle u_k,u\rangle \overline{\langle u_k, \sum\limits_{i=1}^d  v_i\rangle}\right) + O(\delta^2)=$$

$$=d \alpha^q |\langle u_k, u \rangle|^q + O(\delta^2) = d |\langle u_k, u \rangle|^q + O(\delta^2) $$

Therefore, $\sum\limits_{i=1}^d  p_k \frac p d |\langle u_k',v_i'\rangle|^q - p_k p |\langle u_k, u \rangle|^q$ is $O(\delta^2)$ too.

Finally, for any $k$ such that $\langle u_k,u\rangle = 0$, $\sum\limits_{i=1}^d  p_k \frac p d |\langle u_k',v_i'\rangle|^q - p_k p |\langle u_k, u \rangle|^q \geq 0$. Moreover, for $k=1$, we know that

$$\sum\limits_{i=1}^d  p_1 \frac p d |\langle u_1',v_i'\rangle|^q - p_1 p |\langle u_1, u \rangle|^q \geq p_1 \frac p d |\langle u_1',v_1'\rangle|^q =$$

$$=p_1 \frac p d \left|\langle \beta u_1, \alpha u + \delta \frac {u_1} {|u_1|} \rangle\right|^q = p_1 \frac p d \beta^q |u_1|^q \delta^q = \Omega(\delta^q).$$

This means that the increase caused by the perturbed orthogonal pair $(u, u_1)$ is at least $\Omega(\delta^q)$ and all possible decreases were $O(\delta^2)$. Since $q<2$, for a sufficiently small $\delta$, the perturbed isotropic measure $\mu'$ has a larger $q$-th moment than the initial measure $\mu$.

\end{enumerate}

\end{proof}

\end{appendices}

\end{document}